\newtheorem{defn}{Definition}
\newtheorem{thm}{Theorem}[section]
\newtheorem{prop}[thm]{Proposition}
\newtheorem{lemme}[thm]{Lemma}
\newtheorem{remark}[defn]{Remark}
\newtheorem*{main}{Main Theorem}
\newtheorem*{str}{Structure Theorem}
\newtheorem*{1.18}{Theorem 1.18 in [DPS]}
\newtheorem*{voin}{Proposition 3.3 in [Voi05]}
\newtheorem*{voin'}{Version B of Proposition 3.3 in [Voi05]}
\newtheorem*{Lef}{Hard Lefschetz theorem}
\newtheorem*{Remark}{Remark}
\newtheorem{cor}[thm]{Corollary}
\newtheorem*{Main theorem}{Main theorem}
\DeclareMathOperator{\Hom}{Hom}
\DeclareMathOperator{\Ch}{Ch}
\DeclareMathOperator{\Alb}{Alb}
\DeclareMathOperator{\Aut}{Aut}
\DeclareMathOperator{\Todd}{Todd}
\DeclareMathOperator{\nd}{nd}
\DeclareMathOperator{\pr}{pr}
\DeclareMathOperator{\inv}{inv}
\DeclareMathOperator{\id}{id}
\title{DEFORMATION OF KäHLER MANIFOLDS}
\author{Junyan CAO}
\email{junyan.cao@ujf-grenoble.fr}
\address{Université de Grenoble I, Institut Fourier, 38402 Saint-Martin d'Hères, France}
\begin{document}

\begin{abstract}
It has been shown by Claire Voisin in 2003 that one cannot always
deform a compact Kähler manifold into a projective algebraic manifold,
thereby answering negatively a question raised by Kodaira. In this article,
we prove that under an additional semipositivity or seminegativity
condition on the canonical bundle, the answer becomes positive, namely
such a compact Kähler manifold can be approximated by deformations of
projective manifolds.
\end{abstract}

\maketitle

\tableofcontents

\section{Introduction}

It is well known that the curvature of the canonical bundle controls the structure of projective varieties. 
C.Voisin has given a counterexample to the Kodaira conjecture 
which states that one cannot always deform a compact Kähler manifold to a projective manifold. 
In her counterexample one can see that the canonical bundle is neither nef nor
anti-nef.
Therefore it is interesting to ask whether for a Kähler manifold with a nef or
anti-nef canonical bundle,
one can deform it to a projective variety. 
In this article, we discuss the deformation properties of Kähler manifolds in
the following three cases:

$(1)$ Compact Kähler manifolds with hermitian semipositive anticanonical bundles.

$(2)$ Compact Kähler manifolds with real analytic metrics and nonpositive bisectional curvatures.

$(3)$ Compact Kähler manifolds with nef tangent bundles.
\vspace{10 pt}

We first recall some definitions about numerical effective (nef) bundles (cf.
\cite{DPS94} for details).
\begin{defn}
A vector bundle $E$ is said to be numerically effective (nef) if the canonical
bundle
$\mathcal{O}_{E}(1)$ is nef on $\mathbb{P}(E)$,
the projective bundle of hyperplanes in the fibres of $E$. 
For a nef line bundle $L$ on a compact Kähler manifold, 
the numerical dimension $\nd (L)$ is defined to be the largest number $v$, such
that $c_{1}(L)^{v}\neq 0$.
A holomorphic vector bundle $E$ over $X$ is said to be numerically flat if both
$E$ and $E^{*}$ are nef 
( or equivalently if $E$ and $(\det E)^{-1}$ are nef).
\end{defn}

\begin{defn}
Let $X$ be a compact Kähler manifold. 
We say that $X$ can be approximated by projective varieties, 
if there exists a deformation of $X$: $\mathcal{X}\rightarrow \Delta$ 
such that the central fiber $X_{0}$ is $X$,
and there exists a sequence $t_{i}\rightarrow 0$ in $\Delta$ such that all the
fibers $X_{t_{i}}$ are projective. 
\end{defn}

The main result of this article is
\begin{main}
If $X$ is a compact Kähler manifold in one of the above three cases, 
then $X$ can be approximated by projective varieties.
\end{main}

The proof for these three types of manifolds relies on their respective
structure theorems. 
We first sketch the strategy of the proof when $X$ is a compact Kähler manifold
with hermitian semipositive anticanonical bundle.
We first recall that
a compact Kähler manifold $X$ is said to be deformation unobstructed, 
if there exists a smooth deformation of $X$,
$\pi:\mathcal{X}\rightarrow \Delta$, such that the Kodaira-Spencer map 
$T_{\Delta}\rightarrow H^{1}(X, T_{X})$ is surjective.
For this type of manifolds, we have the following proposition:

\begin{voin}
Assume that a deformation unobstructed
compact Kähler manifold $X$ has a Kähler
class $\omega$
satisfying the following condition: the interior product
$$\omega\wedge: H^{1}(X, T_{X})\rightarrow H^{2}(X, \mathcal{O}_{X})$$
is surjective. Then $X$ can be approximated by projective varieties.
\end{voin}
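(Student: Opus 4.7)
The plan is to combine Griffiths' infinitesimal period relation with the implicit function theorem and Kodaira's projectivity criterion. The rough idea: since rational classes are dense in $H^{2}(X,\mathbb{R})$, I want to perturb $\omega$ to a nearby rational class $\alpha \in H^{2}(X,\mathbb{Q})$ and simultaneously perturb the complex structure $X = X_{0}$ to some $X_{t}$ on which $\alpha$ remains of Hodge type $(1,1)$. For $\alpha$ close to $\omega$ and $t$ close to $0$, openness of the Kähler cone will force $\alpha$ to stay Kähler on $X_{t}$, and Kodaira embedding will then make $X_{t}$ projective.

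Shrinking $\Delta$ if necessary, I may assume $R^{2}\pi_{\ast}\mathbb{R}$ is trivializable, so every class $\alpha \in H^{2}(X,\mathbb{R})$ extends to a flat (Gauss--Manin) section $\alpha_{t} \in H^{2}(X_{t},\mathbb{R})$. Fix a smooth trivialization of the Hodge quotient bundle $\sH^{2}/\sF^{1}$ near $0$, identifying each fibre with $H^{2}(X,\sO_{X})$, and define
\[
\Phi \colon \Delta \times H^{2}(X,\mathbb{R}) \longrightarrow H^{2}(X,\sO_{X}), \qquad (t,\alpha) \longmapsto \pr^{0,2}_{t}(\alpha_{t}),
\]
so that $\Phi(t,\alpha)=0$ precisely when $\alpha_{t}$ is of Hodge type $(1,1)$ on $X_{t}$. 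In particular $\Phi(0,\omega)=0$.

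The key computation is the Griffiths infinitesimal period formula: for $\xi \in T_{0}\Delta$,
\[
\partial_{t}\Phi(0,\omega)\cdot \xi \;=\; \omega \wedge \kappa(\xi),
\]
where $\kappa \colon T_{0}\Delta \to H^{1}(X,T_{X})$ is the Kodaira--Spencer map and $\omega \wedge$ is the interior product appearing in the statement. Since $X$ is deformation unobstructed, $\kappa$ is surjective; the hypothesis on $\omega$ then makes the composition above surjective onto $H^{2}(X,\sO_{X})$. The other partial $\partial_{\alpha}\Phi(0,\omega)$ is the standard $(0,2)$-projection $H^{2}(X,\mathbb{R}) \to H^{2}(X,\sO_{X})$, which is already surjective. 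The implicit function theorem now produces a smooth submanifold $Z \subset \Delta \times H^{2}(X,\mathbb{R})$ through $(0,\omega)$ cut out by $\Phi=0$, and the projection $Z \to H^{2}(X,\mathbb{R})$ is a submersion at $(0,\omega)$.

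To finish, pick $\alpha_{n} \in H^{2}(X,\mathbb{Q})$ with $\alpha_{n} \to \omega$; the submersion furnishes $t_{n} \to 0$ in $\Delta$ with $(t_{n},\alpha_{n}) \in Z$, so $\alpha_{n}$ is of type $(1,1)$ on $X_{t_{n}}$. For $n$ large, openness of the Kähler cone inside $H^{1,1}(X_{t_{n}},\mathbb{R})$ forces $\alpha_{n}$ to remain Kähler, endowing $X_{t_{n}}$ with a rational Kähler class and hence projectivity via Kodaira embedding. The main subtlety I expect is the derivative formula itself: $\Phi$ is not holomorphic in $t$ (projecting onto $H^{0,2}$ involves complex conjugation), so one has to set up the smooth trivialization of $\sH^{2}/\sF^{1}$ carefully in order to justify the Griffiths computation; once that formula is in hand, the rest is a clean implicit-function-plus-density argument.
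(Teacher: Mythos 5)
Your proof is correct and is precisely Voisin's density argument via the infinitesimal period map, which is the very content the paper invokes when it cites Proposition 3.3 of [Voi1] together with the density criterion [Voi2, Proposition 5.20] (the paper does not reprove the statement). The subtlety you flag about the derivative is not an issue: since $\Phi(\cdot,\omega)$ is the image of the Gauss--Manin-flat section $\sigma$ in the holomorphic quotient bundle $\sH^{2}/\sF^{1}$ and $\sigma(0)\in F^{1}_{0}$, the derivative at $t=0$ is intrinsically defined (independent of trivialization), its $\overline{\partial}_{t}$-component vanishes because the projected section is a holomorphic section of $\sH^{2}/\sF^{1}$, and the $\partial_{t}$-component is exactly the IVHS contraction $\kappa(\xi)\lrcorner\omega$ as you assert.
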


In \cite{DPS96}, it is proved that after a finite cover, 
a compact Kähler manifold
with hermitian semipositive anticanonical bundle
has a smooth fibration to a compact Kähler manifold
with trivial canonical bundle and the fibers $Y_{t}$ satisfy the vanishing property:
$$H^{q}(Y_{t},\mathcal{O}_{Y_t})=0\qquad \text{for } q\geq 1.$$
Therefore the Dolbeault cohomology of $X$ is easy to calculate.
One can thus construct explicitly a deformation of $X$ satisfying the surjectivity in
\cite[Proposition 3.3]{Voi1}.
Therefore this type of manifolds can be approximated by projective varieties.
\vspace{10 pt}

When $X$ is a compact Kähler manifold with nef tangent bundle, the proof is more difficult. 
It is based on the structure theorem \cite[Theorem 3.14]{DPS94} which can be stated as follows.

\begin{thm}
Let $X$ be a compact Kähler manifold with nef tangent bundle $T_{X}$. 
Let $\widetilde{X}$ be a finite étale cover of $X$ of maximum irregularity
$q=h^{1}(\widetilde{X}, \mathcal{O}_{\widetilde{X}})$.
Then the Albanese map $\pi: \widetilde{X}\rightarrow T$ is a smooth fibration
over a $q$-dimensional torus, and $-K_{\widetilde{X}}$ is relatively ample.
\end{thm}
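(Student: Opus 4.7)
The plan is to build $\pi$ as the Albanese map and extract its geometric properties step by step from the nef-ness of $T_{\widetilde X}$ together with the maximality of $q$. I would work throughout with $\widetilde X$ rather than $X$, and let $\pi\colon \widetilde X\to T := \mathrm{Alb}(\widetilde X)$ denote the Albanese morphism; by Hodge symmetry $h^0(\widetilde X,\Omega^1_{\widetilde X})=q=\dim T$.

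First, I would establish that any nonzero $\omega\in H^0(\widetilde X,\Omega^1_{\widetilde X})$ is nowhere vanishing. Viewing $\omega$ as a morphism $T_{\widetilde X}\to\sO_{\widetilde X}$, a zero of $\omega$ at a point would produce, after restriction to a generic smooth curve $C$ passing through the zero and crossing $\widetilde X$ transversally in its nef cone direction, a quotient line bundle of $T_{\widetilde X}|_C$ of negative degree, contradicting the nef-ness of $T_{\widetilde X}|_C$. Applying the same reasoning to arbitrary $\bC$-linear combinations of a basis $\omega_1,\dots,\omega_q$ of $H^0(\widetilde X,\Omega^1_{\widetilde X})$ shows that the evaluation map $H^0(\widetilde X,\Omega^1_{\widetilde X})\otimes \sO_{\widetilde X}\to \Omega^1_{\widetilde X}$ has constant rank $q$ everywhere. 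Equivalently, the differential $d\pi_x$ is surjective at every $x\in \widetilde X$, so $\pi$ is a submersion. Its image is therefore open in $T$, and by compactness of $\widetilde X$ equal to all of $T$, so $\pi$ is a smooth surjective fibration over the $q$-dimensional torus $T$.

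The remaining task is to show $-K_{\widetilde X}$ is $\pi$-ample. Fix a smooth fiber $F=\pi^{-1}(t_0)$. From the short exact sequence $0\to T_F\to T_{\widetilde X}|_F\to \pi^* T_T|_F\to 0$ with the quotient trivial, a standard fact about nef bundles gives that $T_F$ is nef. The crucial step is to show that $F$ has zero irregularity: if $h^1(F,\sO_F)>0$, then by the relative Albanese construction for $\pi$ (possibly after passing to a further finite étale cover to deal with monodromy), one produces an étale cover $\widetilde X'\to \widetilde X$ on which the pullback of $\pi$ admits a further morphism to an abelian variety of dimension strictly greater than $q$, contradicting the maximality of $q$. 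Once $h^1(F,\sO_F)=0$, one invokes the classification part of the DPS structure theorem: a compact Kähler manifold with nef tangent bundle and vanishing irregularity is Fano, so $-K_F$ is ample. Since $K_T$ is trivial, $K_{\widetilde X/T}=K_{\widetilde X}$ and $K_{\widetilde X}|_F=K_F$, which together with ampleness on every fiber and a semicontinuity/openness argument for relative ampleness yields that $-K_{\widetilde X}$ is $\pi$-relatively ample.

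The main obstacle I expect is the middle step in the last paragraph, namely ruling out irregularity on the fiber using maximality of $q$. One has to show that a hypothetical $\pi$-relative 1-form descends to give genuine new holomorphic 1-forms on an étale cover of $\widetilde X$, which requires either a careful Leray spectral sequence argument combined with monodromy control on $R^1\pi_*\sO_{\widetilde X}$, or an appeal to the fact that on a smooth fibration over a torus the relative Albanese is dominated by an abelian scheme whose total space étale-covers $\widetilde X$. The first paragraph's nef-forces-nowhere-vanishing claim is also delicate and is the technical heart of the argument; all later steps are comparatively formal once it is established.
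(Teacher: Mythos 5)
The paper does not prove this statement; it is quoted verbatim from \cite[Theorem 3.14]{DPS94} and used as a black box. Only the Remark immediately following it --- that the Galois group $G$ of the cover descends to a free action on the torus $T$ --- is actually proved in the paper (Lemma 2.5). So there is no internal proof to compare your proposal against, and I can only assess it on its own.

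The proposal has two genuine gaps. First, the argument that a nonzero $\omega\in H^{0}(\widetilde X,\Omega^{1}_{\widetilde X})$ is nowhere vanishing relies on restricting to ``a generic smooth curve through the zero,'' but a compact K\"ahler manifold need not contain any compact curve at all (a very general complex torus has none), so such a curve is not guaranteed to exist; for K\"ahler manifolds nefness of $T_{\widetilde X}$ is a metric condition and is not characterized by restriction to curves. The argument of \cite{DPS94} instead considers the torsion-free rank-one quotient $T_{\widetilde X}\twoheadrightarrow \mathcal{I}_Z\subset\mathcal{O}_{\widetilde X}$ determined by $\omega$ and splits into cases: a divisorial part of the zero scheme $Z$ is excluded by the sign of its degree, while a codimension $\geq 2$ zero scheme is excluded by their result that a torsion-free rank-one quotient of a nef bundle with vanishing $c_{1}\cdot\omega^{n-1}$ must in fact be a nef line bundle, forcing $Z=\emptyset$. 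Second, your last step invokes ``a compact K\"ahler manifold with nef tangent bundle and vanishing irregularity is Fano'' as an available classification result, but that statement requires the much stronger hypothesis that \emph{every} finite \'etale cover has vanishing irregularity (equivalently $\pi_{1}$ finite), and showing that the Albanese fibers acquire this property after a further cover is precisely what the maximality of $q$ in the theorem is supposed to yield --- so as written, that step is circular. You correctly flag the control of the fiber irregularity as the main obstacle; in \cite{DPS94} it is resolved via Chern-class inequalities for nef bundles and the Albanese machinery built up in their earlier sections, not by a direct relative-Albanese or Leray argument.
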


\begin{Remark}
We will prove that after passing to 
some finite étale Galois cover $\widetilde{X}\rightarrow X$ with group $G$,
there exists a commutative diagram
$$\begin{CD}
  \widetilde{X} @>>> X\\
 @VV\widetilde{\pi} V @VV\pi V\\
  T @>>> T/ G
  \end{CD}
$$
and $T/ G$ is smooth.
\end{Remark}
 
In \cite[Theorem 3.20]{DPS94}, when $X$ is a projective variety with nef tangent bundle, 
it is proved that $\pi_{*}(-mK_{X})$ is numerically flat for all $m\geq 1$. 
One of the main ingredient of this article is to prove that this is also true when $X$
is a compact Kähler manifold. 

\begin{thm}
Let $X$ be a compact Kähler manifold of dimension $n$ with nef tangent bundle
such that the Albanese map $\pi: X\rightarrow T$ is a smooth fibration onto a torus $T$ of dimension $r$,
and $-K_{X}$ is relatively ample. 
Then $\nd(-K_{X})=n-r$, and $\pi_{*}(-mK_{X})$ is numerically flat for all $m\geq 1$.
\end{thm}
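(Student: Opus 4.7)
My plan is to prove the two assertions together, since they share the same structural input. The lower bound $\nd(-K_X)\geq n-r$ is immediate: because $-K_X$ is $\pi$-ample and each fibre $F$ has dimension $n-r$, $\int_F(-K_X)^{n-r}>0$, so $\pi_\ast c_1(-K_X)^{n-r}\in H^0(T,\mathbb{R})$ is a positive constant and $c_1(-K_X)^{n-r}\neq 0$.

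For numerical flatness of $\pi_\ast(-mK_X)$, by the DPS criterion (a nef vector bundle with $c_1=0$ is numerically flat) it suffices to establish nefness and triviality of $c_1$. Nefness would follow from Berndtsson's theorem on positivity of direct images. Since $-K_X$ is nef on the Kähler manifold $X$, it admits smooth Hermitian metrics with curvature $\geq-\varepsilon\omega$ for a fixed Kähler form $\omega$; using $K_T=0$, we rewrite $-mK_X=K_{X/T}\otimes L_m$ with $L_m=-(m+1)K_{X/T}$, apply Berndtsson to $\pi_\ast(K_{X/T}\otimes L_m)$, and pass to the limit $\varepsilon\to 0$ to obtain an $L^2$-metric on $\pi_\ast(-mK_X)$ with Nakano semi-positive curvature, hence nefness on the Kähler base.

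The main obstacle is the vanishing of $c_1(\pi_\ast(-mK_X))\in H^2(T,\mathbb{R})$, since the intersection-theoretic arguments used in \cite{DPS94} for the projective case (testing against ample curves in $T$) do not extend directly to a Kähler base. I would exploit the structure of the fibres: being Fano with nef tangent bundle, they satisfy $H^q(F,\sO_F)=0$ for $q\geq 1$ and, together with the structural input feeding into DPS, are rigid, $H^1(F,T_F)=0$. Consequently the Kodaira--Spencer class of $\pi$ vanishes identically and $\pi$ is a holomorphic fibre bundle with structure group the complex semisimple Lie group $G=\Aut^0(F)$. Writing $X=P\times^G F$ for a principal $G$-bundle $P\to T$, the direct image identifies with the associated bundle $\pi_\ast(-mK_X)=P\times^G H^0(F,-mK_F)$; its determinant corresponds to the character $\det\rho_m$ of $G$, which is trivial because a semisimple group admits no nontrivial characters. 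Hence $c_1(\pi_\ast(-mK_X))=0$ and, combined with nefness, numerical flatness follows.

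Finally, the upper bound $\nd(-K_X)\leq n-r$ follows from the same fibre-bundle description. The Künneth-type decomposition $H^\ast(X,\mathbb{R})\cong H^\ast(T,\mathbb{R})\otimes H^\ast(F,\mathbb{R})$ is valid because $F$ has only even cohomology; writing $c_1(-K_X)=\pi^\ast\alpha+\widetilde\beta$ with $\alpha\in H^2(T,\mathbb{R})$ and $\widetilde\beta|_F=c_1(-K_F)$, the horizontal piece $\alpha$ corresponds to the character of $G$ on the fibre of the line bundle $-K_F$, which is again trivial by semisimplicity. Hence $\alpha=0$ and $c_1(-K_X)^{n-r+1}=\widetilde\beta^{n-r+1}=0$ since $\dim F=n-r$, completing the proof.
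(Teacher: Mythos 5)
Your Berndtsson-based nefness argument for $\pi_*(-mK_X)$ and the easy inequality $\nd(-K_X)\ge n-r$ are both sound and essentially identical to the paper's Proposition 2.1. The problem is the remaining two assertions. Your route to $c_1(\pi_*(-mK_X))=0$ and to $\nd(-K_X)\le n-r$ rests on three claims about the Fano fibre $F$ that are not known unconditionally: (a) rigidity, $H^1(F,T_F)=0$; (b) that $\Aut^0(F)$ has no nontrivial characters (e.g.\ is semisimple); (c) that $F$ has only even cohomology and that the monodromy of $\pi_1(T)$ on $H^*(F,\mathbb R)$ is trivial, so that a Künneth splitting $H^*(X)\cong H^*(T)\otimes H^*(F)$ holds. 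All three are expected consequences of the Campana--Peternell conjecture (Fano manifolds with nef tangent bundle are rational homogeneous), but that conjecture is open in general, and the DPS structure theory invoked here does not deliver any of (a)--(c). Without (a) you cannot conclude that $\pi$ is a \emph{holomorphic} fibre bundle, so the entire $P\times^G F$ description — and with it the identification of $\det\pi_*(-mK_X)$ with a character of $G$ — has no footing. Even granting the bundle structure, (b) does not follow from $h^{0,q}(F)=0$ alone, and (c) requires controlling the monodromy, which is not automatic even over a torus base.

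The paper avoids Campana--Peternell entirely and replaces this with a harder but unconditional argument. It first proves $\nd(-K_X)=n-r$ by contradiction: assuming $\nd(-K_X)\ge n-r+1$, either $\pi_*\bigl((-K_X)^{n-r+1}\bigr)=0$ (Case 1), in which case $c_1(E_m)=0$ follows from Riemann--Roch--Grothendieck plus \cite[Cor.\,2.6]{DPS94}, hence $E_m$ is numerically flat, and the deformation in Proposition 4.2 moves $X$ to a projective manifold where ordinary Kawamata--Viehweg gives a contradiction (Corollary 4.3); or $\pi_*\bigl((-K_X)^{n-r+1}\bigr)$ is a nonzero semipositive integral class on $T$ (Case 2), producing a submersion $T\to S$ to an abelian variety (Proposition 2.2), and then the Kähler Kawamata--Viehweg-type vanishing of Proposition 5.3 — built out of the Monge--Ampère construction in Lemma 5.1 and the curvature-eigenvalue vanishing Proposition 2.4 — contradicts the non-vanishing of Lemma 6.1. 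Once $\nd(-K_X)=n-r$ is established, $c_1(E_m)=0$ again follows from Riemann--Roch--Grothendieck and \cite[Cor.\,2.6]{DPS94}, and nefness of $E_m$ then yields numerical flatness. So the gap in your proposal is precisely the step the paper spends Sections 4--6 on; you cannot shortcut it through (conjectural) rigidity and homogeneity of the fibres.
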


\hspace{-12 pt}We combine this with \cite[Corollary 3.10]{Sim} which states that  
any numerically flat bundle over a compact  Kähler manifold is in fact a local system
\footnote{If the base manifold is a torus,
an explicite construction of the local system would be found in the author's forthcoming Phd thesis.}
:
\begin{thm}
Let $E$ be a numerically flat holomorphic vector bundle on a Galois quotient of a torus $T$, 
then the transformation matrices can be choosen to be constant matrices.
\end{thm}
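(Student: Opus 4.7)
The plan is to combine Simpson's characterization of numerically flat bundles as local systems with the contractibility of the universal cover of $T/G$ in order to produce an explicit trivialization by flat sections in which the deck action, and hence the transition cocycle on $T/G$, is given by constant matrices. Writing $T=\mathbb{C}^n/\Lambda$, let $q\colon\mathbb{C}^n\to T/G$ denote the universal cover, whose deck group $\Gamma=\pi_1(T/G)$ fits into the short exact sequence $1\to\Lambda\to\Gamma\to G\to 1$.

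By \cite[Corollary 3.10]{Sim}, the numerically flat bundle $E$ on $T/G$ is a local system: it carries a canonical holomorphic flat connection $\nabla$ and is determined up to isomorphism by its monodromy representation $\rho\colon\Gamma\to GL(r,\mathbb{C})$. Since $\mathbb{C}^n$ is simply connected and contractible, the pullback $q^{*}E$ admits a global frame $s_1,\dots,s_r$ of $\nabla$-flat sections, which identifies $q^{*}E$ with the trivial bundle $\mathbb{C}^n\times\mathbb{C}^r$. Because $\nabla$ is canonically attached to $E$, the deck action of $\Gamma$ on $q^{*}E$ preserves the space of flat sections, and in this trivialization must therefore act as
\[
\gamma\cdot(z,v)=(\gamma z,\,\rho(\gamma)v),\qquad\gamma\in\Gamma,
\]
with a \emph{constant} matrix $\rho(\gamma)\in GL(r,\mathbb{C})$. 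Choosing a locally finite open cover $\{U_\alpha\}$ of $T/G$ such that each $q^{-1}(U_\alpha)$ decomposes as $\bigsqcup_{\gamma\in\Gamma}\gamma\cdot V_\alpha$, the transition between the sheets $\gamma_1\cdot V_\alpha$ and $\gamma_2\cdot V_\alpha$ is exactly $\rho(\gamma_2\gamma_1^{-1})$, and these constant matrices provide the required description of $E$ on $T/G$.

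The main obstacle is the assertion that the lifted deck action on $q^{*}E$ is affine in the fibre variable. This rigidity reduces to the statement that the flat connection produced by Simpson's theorem is canonical: any holomorphic bundle automorphism of $E$, in particular each element of the descent data from the torus cover $T\to T/G$, automatically preserves $\nabla$, and hence preserves the space of flat sections after pullback to $\mathbb{C}^n$. Once this canonicity is secured, the remainder of the proof is a direct unfolding of the deck action into a constant transition cocycle.
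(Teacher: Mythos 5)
Your proof takes the same essential route as the paper: invoke Simpson's Corollary 3.10 to conclude that $E$ carries a compatible flat holomorphic connection, and then unwind the local-system structure on the universal cover $\mathbb{C}^n\to T/G$ to extract a constant transition cocycle (the paper treats this second step as implicit in the phrase ``local system''). One difference is that the paper's Proposition~2.3 does not apply Simpson's corollary to $E$ directly, but first invokes \cite[Theorem 1.18]{DPS94} to display $E$ as a successive extension of hermitian flat bundles and only then applies \cite[Corollary 3.10]{Sim}; if that corollary requires the filtration as a hypothesis rather than plain numerical flatness, your argument should insert this intermediate step.

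Your closing paragraph identifies a spurious obstacle, and the justification offered for removing it is wrong even though the conclusion is right. The deck action of $\Gamma=\pi_1(T/G)$ on $q^*E$ is not a family of holomorphic automorphisms of $E$; it is the tautological identification $E_{q(z)}=E_{q(\gamma z)}$ built into the pullback construction, and this tautological action preserves $q^*\nabla$ automatically, with no appeal to any canonicity of $\nabla$. Moreover the assertion that ``any holomorphic bundle automorphism of $E$ automatically preserves $\nabla$'' is false in general: gauge transformations do not preserve a flat connection unless they are parallel. Fortunately none of this is needed. Once you observe that the deck equivariance of $q^*\nabla$ is built in, the first two paragraphs already give a clean derivation of the constant cocycle from the monodromy representation, and the third paragraph can simply be deleted.
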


\hspace{-12 pt}Using Theorem 1.2 and 1.3, 
we will see that one can approximate 
Kähler manifolds with nef tangent bundles by projective varieties.
\vspace{10 pt}

We now sketch the proof of Theorem 1.2. Thanks to a formula in \cite{Be09}, $\pi_{*}(-m K_{X})$ is nef.
Then using the argument in \cite{DPS94}, the only difficult part is to prove $\nd(-K_{X})=n-r$.
If $X$ is projective, the equality $\nd(-K_{X})=n-r$ comes directly from the Kawamata-Viehweg vanishing theorem. 
Since $X$ is just a compact Kähler manifold in our case, 
the proof is more difficult.
We get it by contradiction.
Let $\pi: X\rightarrow T$ be the fibration in Theorem 1.2.
If $\nd(-K_{X})\geq n-r+1$, there are two cases:

\hspace{-12pt}$(i)$ The (1,1)-class $\pi_{*}((-K_{X})^{n-r+1})$ is trivial on
$T$. 

\hspace{-12pt}$(ii)$ The (1,1)-class $\pi_{*}((-K_{X})^{n-r+1})$ is effective (non trivial) on $T$.

In the case $(i)$, thanks to \cite[Corollary 2.6]{DPS94},
we can prove that $\pi_{*}(-mK_{X})$ is numerically flat. 
By Theorem 1.3,
we can thus deform $X$ to a projective manifold 
by preserving $\nd (-K_{X})$. 
Using the Kawamata-Viehweg vanishing theorem in the projective case, 
we can therefore prove that $\nd(-K_{X})=n-r$. 
Thus we get a contradiction.

In the case $(ii)$, the argument is more complicated. 
By solving a Monge-Ampère equation, we can prove that  
$-K_{X}-c\pi^{*}(\pi_{*}((-K_{X})^{n-r+1}))$
is pseudo-effective for some $c>0$. 
Therefore we can construct a singular metric $h$ on $-K_{X}$ 
with a good control on its eigenvalues and with
$\mathcal{I}(h)=\mathcal{O}_{X}$, 
where $\mathcal{I}(h)$ is the multiplier ideal sheaf associated to the singular metric $h$ 
(cf. \cite{Dem12} for the definition of multiplier ideal sheaf).
Thanks to the construction of the metric $h$, we can prove that 
$$H^{r}(X, K_{X}\otimes (-K_{X})\otimes\mathcal{I}(h))=0,$$
where $r=\dim T$.
Therefore $H^{r}(X, \mathcal{O}_{X})=0$, 
which implies that $H^{r}(T, \mathcal{O}_{T})=0$
by the observation that $-K_{X}$ is relatively ample.
Since the torus $T$ is of dimension $r$,
we get a contradiction.
\vspace{5 pt}

The organization of the article is as follows. 
Let $\pi: \widetilde{X}\rightarrow T$ be the smooth fibration of Theorem 1.2. 
In Section 2, we gather some useful propositions.
In particular, we prove a nefness result by using 
\cite[Formula (4.8)]{Be09}. 
In Section 3, we prove our main theorem when $X$ is in the case $(1)$ or $(2)$. 
As an interesting application, the dual cone conjecture in \cite[Conjecture 2.3]{BDPP04} is proved
for the case $(1)$. 
In the following sections, we concentrate on the proof of our main theorem when
$X$ is a compact Kähler manifold with nef tangent bundle.
In Section 4, we prove a deformation lemma which allows us to 
deform a Kähler manifold to a projective variety under certain conditions
and discuss how one can deform $X$ to a projective variety by keeping the
numerical dimension.
In Section 5, we prove a very special Kawamata-Viehweg vanishing theorem which
will play a central role 
in the proof of Theorem 1.2.
Using the results in Section 4 and 5, we finally complete the proof of our main
theorem in Section 6.   
\vspace{5 pt}

\hspace{-12pt}\textbf{Acknowledgements:} 
I would like to thank my supervisor J-P.Demailly for helpful discussions and his
kindness in sharing his ideas.
I would also like to thank C.Voisin 
who explained to me that \cite[Proposition 3.3]{Voi1} could be used to prove
certain approximation problems during a summer school in Norway,
and C.Simpson who told me that the results in \cite{Sim} could largely simplify
the original proof of Theorem 1.3.

\section{Preliminaries}

We first prove some preparatory propositions which are useful in the proof of our main theorem.

\begin{prop}\label{berndslemma}
Let $X$ be a compact Kähler manifold possessing a smooth submersion $\pi: X\rightarrow T$  
to a compact Kähler manifold $T$. 
If $-K_{X}$ is nef on $X$ and is relatively ample for $\pi$, 
then the direct image $E=\pi_{*}(K_{X/T}-(m+1)\cdot K_{X})$ is a nef vector bundle 
for all $m\in\mathbb{N}$.
\end{prop}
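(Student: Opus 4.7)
The plan is to apply Berndtsson's curvature formula \cite[Formula (4.8)]{Be09} to the direct image bundle $\pi_{*}(K_{X/T}+L)$ with $L:=-(m+1)K_{X}$. The first preliminary step is to verify that $E$ is locally free. Since $-K_{X}$ is $\pi$-ample, so is $L$, hence relative Kodaira vanishing gives $R^{q}\pi_{*}(K_{X/T}+L)=0$ for $q\geq 1$; cohomology and base change then imply that $E$ is a holomorphic vector bundle on $T$ whose fiber at $t\in T$ is $H^{0}\bigl(X_{t},(K_{X/T}+L)|_{X_{t}}\bigr)$.

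The substantive part is to prove nefness by approximation. I would fix Kähler forms $\omega_{X}$ on $X$ and $\omega_{T}$ on $T$. Since $-K_{X}$ is nef, for every $\epsilon>0$ there is a smooth Hermitian metric $h_{\epsilon}$ on $-K_{X}$ with $\Theta(h_{\epsilon})\geq-\epsilon\,\omega_{X}$; its $(m+1)$-th tensor power $h_{\epsilon}^{m+1}$ is then a smooth metric on $L$ of curvature $\geq-(m+1)\epsilon\,\omega_{X}$, and it produces a smooth fiberwise volume form and hence an $L^{2}$-metric on $E$. Formula (4.8) of \cite{Be09} is a pointwise identity expressing the Chern curvature $\Theta^{E}$ of this $L^{2}$-metric as a fiber integral of $\Theta(L)$ contracted with the horizontal lifts of tangent vectors on $T$, plus manifestly non-negative correction terms coming from the primitive part of the Kodaira--Spencer tensor. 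Since the fibers of $\pi$ are compact, the horizontal lift $V$ of a unit $\omega_{T}$-vector $v\in T_{T}$ has $\omega_{X}$-norm bounded by a constant depending only on $\omega_{X}$, $\omega_{T}$ and $\pi$. Combined with $\Theta(L)\geq-(m+1)\epsilon\,\omega_{X}$, this yields a Griffiths lower bound $\Theta^{E}\geq-C\epsilon\,\omega_{T}\otimes\mathrm{Id}_{E}$ with $C$ independent of $\epsilon$; letting $\epsilon\to 0$ proves that $E$ is nef.

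The main obstacle is that Berndtsson's positivity theorem is usually stated under the hypothesis $\Theta(L)\geq 0$, whereas here $L$ is only nef. The key point, which I would spell out carefully, is that formula (4.8) is an algebraic pointwise identity which does not itself use positivity of $L$: it is the identity \emph{from} which Berndtsson derives his semipositivity theorem by dropping a non-negative term. Once this is made explicit, substituting the approximate metrics $h_{\epsilon}^{m+1}$ into (4.8) produces nefness of $E$ with an $\epsilon$-controlled error, as above. The only additional care required is to make the constant $C$ uniform in $\epsilon$, which follows from compactness of $X$ and $T$ together with the standard comparison of horizontal lifts with respect to $\omega_{X}$ and $\omega_{T}$.
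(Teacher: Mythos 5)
Your proposal follows essentially the same route as the paper: first establish local freeness of $E$ via relative Kodaira vanishing (the paper phrases this through Riemann--Roch plus Grauert's theorem, you through cohomology and base change -- the same content), then exploit the fact that Berndtsson's Formula (4.8) is an identity valid without a positivity hypothesis on $L$, feed in an $\epsilon$-approximate metric coming from nefness of $-K_{X}$, and track the resulting $O(\epsilon)$ loss in the curvature of $E$ to conclude nefness in the limit. Your explicit discussion of comparing $\omega_{X}$ with $\pi^{*}\omega_{T}$ via horizontal lifts and compactness is in fact a slightly more careful rendering of the step the paper abbreviates as the bound $i\Theta_{\varphi}\geq-\epsilon\omega_{T}$; otherwise the two arguments coincide.
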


\begin{proof}

Let us first show that the direct image $E$ is locally free. 
Let $X_{t}$ be the fiber of $\pi$ over $t\in T$. 
Thanks to the Kodaira vanishing theorem, we have
$$H^{q}(X_{t}, -mK_{X_{t}})=0\qquad \text{for}\hspace{5 pt} q\geq 1.$$ 
By the Riemann-Roch theorem, 
$$\sum_{q} (-1)^{q}h^{q}(X_{t}, -mK_{X_{t}})$$
is a constant independent of $t$. 
Therefore $h^{0}(X_{t}, -mK_{X_{t}})$ is also a constant and 
by a standard result of H.Grauert,
the direct image $E=\pi_{*}(K_{X/T}-(m+1)\cdot K_{X})$ is locally free. 

Since $-(m+1)K_{X}$ is also nef, for any $ \epsilon> 0$ fixed, 
there exists a smooth metric $\varphi$ on $-(m+1)\cdot K_{X}$ such that 
$$i\Theta_{\varphi}(-(m+1)\cdot K_{X})\geq -\epsilon\omega_{T}.$$
Since $E$ is known to be locally free, 
we can use \cite[Formula (4.8)]{Be09}. 
In particular, the metric $\varphi$ on $-(m+1)\cdot K_{X}$ induces a metric on $E$ and
we write its curvature as
$$\Theta^{E}=\sum_{j,k} \Theta_{jk,\varphi}^{E} d t_{j}\wedge d
\overline{t}_{k}$$
where $\{t_{i}\}$ are the coordinates of $T$.
Using the terminology in \cite{Be09}, we assume that
$\{u_{i}\}$ is a base of local holomorphic
sections of $E$ such that $D^{1,0}u_{i}=0$ at a given point. 
We now calculate the curvature at this point. 
Let
$$T_{u}=\sum_{j,k} (u_{j}, u_{k}) \widehat{d t_{j}\wedge d \overline{t}_{k}}.$$
Then 
$$i\partial\overline{\partial}T_{u}=-\sum_{j,k} (\Theta_{jk,\varphi}^{E} u_{j}, u_{k}) d V_{t}.$$
By \cite[Formula (4.8)]{Be09}, we obtain\footnote{The
$i\partial\overline{\partial}\varphi$ below is just
$i\Theta_{\varphi}(-mK_{X})$.}
$$-i\partial\overline{\partial}T_{u}\geq c \pi_{*}(\widehat{u}\wedge
\overline{\widehat{u}}\wedge (i\partial\overline{\partial}\varphi)\cdot e^{-\varphi})$$
where the constant $c$ is independent of $\varphi$.
Since $i\partial\overline{\partial}\varphi\geq -\epsilon\omega_{T}$ by the
choice of $\varphi$, 
we have
$$-i\partial\overline{\partial}T_{u}\geq -c\epsilon\pi_{*}(\widehat{u}\wedge
\overline{\widehat{u}}\wedge\omega_{T}\cdot e^{-\varphi})$$
$$=-c\epsilon (\int_{X_{t}} \sum_{j} (u_{j}, u_{j})e^{-\varphi}) dV_{t}$$
$$=-c\epsilon \|u\|^{2} dV_{t}.$$
In other words, we have
$$\sum_{j,k} (\Theta_{jk,\varphi}^{E} u_{j}, u_{k}) \geq -c\epsilon \|u\|^{2}.$$
Proposition \ref{berndslemma} is proved.

\end{proof}

\begin{prop}
Let $T=\mathbb{C}^{n}/ \Gamma$ be a complex torus of dimension $n$, 
and $\alpha\in H^{1,1}(T,\mathbb{Z})$ an effective non trivial element. 
Then $T$ possess a submersion 
$$\pi: T\rightarrow S$$
to an abelian variety $S$.
Moreover $\alpha=\pi^{*}c_{1}(A)$ for some ample line bundle $A$ on $S$.
\end{prop}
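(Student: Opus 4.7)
The plan is to associate to $\alpha$ a translation-invariant positive semi-definite Hermitian form $H$ on $\mathbb{C}^n$ and take $\pi$ to be the quotient of $T$ by the subtorus whose tangent space is $\ker H$. To set up: averaging any closed positive current representing the effective class $\alpha$ over the translation action of $T$ produces a translation-invariant positive $(1,1)$-form in the class $\alpha$, and hence a positive semi-definite Hermitian form $H$ on $\mathbb{C}^n$ whose imaginary part $E := \mathrm{Im}(H)$ takes integer values on $\Gamma \times \Gamma$. If $H$ is already positive definite, $T$ itself is an abelian variety with ample class $\alpha$ and we take $\pi = \mathrm{id}$; otherwise $K := \ker H \subset \mathbb{C}^n$ is a complex subspace of positive dimension.

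The heart of the proof is to show that $K$ is $\mathbb{Q}$-rational with respect to $\Gamma$. The formula $H(v,w) = E(iv,w) + i E(v,w)$ gives the inclusion $K \subseteq \mathrm{rad}(E)$, the radical of the alternating form $E$ viewed on $\mathbb{C}^n \cong \mathbb{R}^{2n}$. For the reverse inclusion one uses $H \geq 0$: in an orthonormal basis with $H = \mathrm{diag}(\lambda_1, \ldots, \lambda_n)$, $\lambda_i \geq 0$, both $K$ and $\mathrm{rad}(E)$ coincide with $\{v : v_j = 0 \text{ whenever } \lambda_j > 0\}$. Since $E$ takes integer values on $\Gamma$, its radical is the $\mathbb{R}$-span of the $\mathbb{Q}$-subspace of $\Gamma \otimes \mathbb{Q}$ on which $E$ vanishes; hence $K \cap \Gamma$ is a lattice of full rank in $K$ and $K + \Gamma$ is closed in $\mathbb{C}^n$.

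Setting $S := \mathbb{C}^n / (K + \Gamma)$ gives a complex torus of dimension $r := \mathrm{rank}_{\mathbb{C}} H$ and a holomorphic submersion $\pi: T \to S$. The form $H$ descends to a positive-definite Hermitian form $\bar{H}$ on $\mathbb{C}^n / K$ whose imaginary part is integer-valued on the image of $\Gamma$; by the Appell--Humbert description of line bundles on tori, $\bar{H}$ is the first Chern class of an ample line bundle $A$ on $S$, so $S$ is indeed an abelian variety. The relation $\pi^{*} \bar{H} = H$ then yields $\pi^{*} c_1(A) = \alpha$.

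The main obstacle is the rationality of $K$ in the middle step. The identity $K = \mathrm{rad}(E)$ is very special to the semi-positive case: for a general Hermitian form, $\ker H$ is only the largest complex subspace of $\mathrm{rad}(E) \cap i\,\mathrm{rad}(E)$ and can be strictly smaller, in which case there is no obvious reason for $K$ itself to be $\mathbb{Q}$-rational. This is precisely where the effectivity hypothesis on $\alpha$ enters the argument.
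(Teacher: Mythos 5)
Your argument is correct and follows essentially the same route as the paper: pass from the effective class $\alpha$ to a translation-invariant semipositive $(1,1)$-form, identify the kernel $K$ of the associated Hermitian form $H$ with the radical of the integral alternating form $E$, deduce that $K$ is a rational complex subspace and quotient by the corresponding subtorus, so that the induced positive-definite form on the quotient makes it an abelian variety with an ample class pulling back to $\alpha$. The paper phrases this by taking $V$ to be the radical of $G_{\mathbb{Q}}$ on $\Gamma \otimes \mathbb{Q}$ and passing to $V_{\mathbb{R}}$, which is your rationality step in different notation.

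One remark in your final paragraph is wrong, though, and it misidentifies where effectivity actually enters. You claim that $\ker H = \operatorname{rad}(E)$ is ``very special to the semipositive case'' and that otherwise $\ker H$ could be a strictly smaller complex subspace of $\operatorname{rad}(E) \cap i\operatorname{rad}(E)$. In fact, for any Hermitian $H$ (equivalently, any real class of pure type $(1,1)$), the imaginary part $E$ satisfies $E(iv,iw)=E(v,w)$, and this $J$-invariance alone already forces $\operatorname{rad}(E)$ to be a complex subspace: if $E(v,\cdot)\equiv 0$, then $E(iv,w)=E\bigl(iv,\, i(-iw)\bigr)=E(v,-iw)=0$ for all $w$, so $iv\in\operatorname{rad}(E)$. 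Moreover, $U\cap iU$ is automatically $J$-invariant for any real subspace $U$, so there is no ``largest complex subspace'' to extract. Since $\ker H=\operatorname{rad}(E)\cap i\operatorname{rad}(E)$ in general, the $J$-invariance of $\operatorname{rad}(E)$ gives $\ker H=\operatorname{rad}(E)$ unconditionally, and hence the rationality of $K$ holds for every $(1,1)$-class, definite or not. What semipositivity genuinely buys is that the induced form $\bar H$ on $\mathbb{C}^n/K$ is positive definite, which is what makes $S$ an abelian variety and $A$ ample; without it one would obtain only a complex torus $S$ equipped with a nondegenerate but possibly indefinite integral Hermitian class, from which no polarization follows.
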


\begin{proof}
Since $T$ is a torus, we can suppose that $\alpha$ is a constant semipositive $(1,1)$-form.
As $\alpha$ is an integral class, 
it defines a bilinear form
$$G_{\mathbb{Q}}: (\Gamma\otimes\mathbb{Q})\times(\Gamma\otimes\mathbb{Q})\rightarrow \mathbb{Q}.$$
We denote its extension to $\Gamma\otimes\mathbb{R}$ by $G_{\mathbb{R}}$.
Let $V$ be the maxium subspace of $\Gamma\otimes\mathbb{Q}$, 
on which $G_{\mathbb{Q}}$ is zero. 
Therefore $V_{\mathbb{R}}=V\otimes\mathbb{R}$ is also the kernel of
$G_{\mathbb{R}}$, and
$(\Gamma\cap V_{\mathbb{R}})\otimes \mathbb{R}=V_{\mathbb{R}}$.
Moreover, since $\alpha$ is an $(1,1)$-form, 
$V_{\mathbb{R}}$ is a complex subspace of $\mathbb{C}^{n}$.
Hence $V_{\mathbb{R}}/ (\Gamma \cap V_{\mathbb{R}})$ is a complex torus.
We denote it $T_{1}$.
Observing that $T/T_{1}$ is also a complex torus, 
we have thus a natural holomorphic submersion $T\rightarrow T/T_{1}$. 
We denote the complex torus $T/T_{1}$ by $S$.
Since $V_{\mathbb{R}}$ is the kernel of $G_{\mathbb{R}}$,
$\alpha$ is well defined on $S$ and is moreover strictly positive on it. 
The proposition is proved.
\end{proof} 

\begin{prop}
Let $E$ be a numerically flat bundle on a compact Kähler manifold. 
Then $E$ is a local system.
\end{prop}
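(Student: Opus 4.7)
The strategy is induction on the length of a filtration provided by the structure theorem of Demailly--Peternell--Schneider, adapted to the Kähler setting. I would first invoke (the Kähler analogue of) \cite[Theorem 1.18]{DPS94}: any numerically flat bundle $E$ on a compact Kähler manifold admits a filtration by holomorphic subbundles
$$0 = E_{0} \subset E_{1} \subset \cdots \subset E_{k} = E$$
whose successive quotients $E_{i}/E_{i-1}$ are Hermitian flat, hence arise from unitary representations of $\pi_{1}(X)$. The extension of this structural result to the general Kähler case relies on Bando--Siu's approximate Hermitian--Einstein theory together with the vanishing $c_{1}(E) = c_{2}(E) = 0$ forced by numerical flatness.

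Starting the induction at $k=1$, where the conclusion holds by definition, I would treat the inductive step as follows. Given a short exact sequence
$$0 \longrightarrow F \longrightarrow E \longrightarrow Q \longrightarrow 0$$
with $F$ a local system (by the inductive hypothesis) and $Q$ Hermitian flat, the extension class lies in $H^{1}(X, \mathcal{H}om(Q, F))$. Since $\mathcal{H}om(Q, F)$ is itself numerically flat (hom of numerically flat bundles being numerically flat), induction provides on it a local system structure $\mathcal{L}$.

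At this point one invokes \cite[Corollary 3.10]{Sim}, the analytic core of the argument: for a semisimple local system $\mathcal{L}$ on a compact Kähler manifold, every holomorphic extension class in $H^{1}(X, \mathcal{L}^{\mathrm{hol}})$ can be represented by a flat $d$-closed representative, i.e. arises from a class in the group cohomology $H^{1}(\pi_{1}(X), \mathcal{L})$. This upgrades the holomorphic extension defining $E$ to an extension of local systems, closing the induction.

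The main obstacle is precisely this last comparison between $\bar\partial$-cohomology and de Rham cohomology of a \emph{not necessarily unitary} local system. For unitary local systems it would follow from standard Hodge theory with coefficients, but in general it requires the full strength of Simpson's nonabelian Hodge theory (in particular a $\partial\bar\partial$-type lemma for harmonic bundles); this is why the author chooses to quote \cite{Sim} rather than to reprove it, and notes in a footnote that an explicit direct construction is available in the special case where the base is a torus.
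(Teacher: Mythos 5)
Your argument uses the same two ingredients as the paper's own two-sentence proof: the filtration of a numerically flat bundle by hermitian flat quotients from \cite[Theorem 1.18]{DPS94}, and \cite[Corollary 3.10]{Sim} to conclude that such a filtered bundle carries a compatible local system structure. Your unpacking of the induction hidden behind the second citation is faithful in outline, though the paraphrase of Simpson's corollary as a surjectivity statement for \emph{semisimple} coefficient local systems is imprecise---the coefficient local systems $\mathcal{H}om(Q,F)$ arising in the inductive step are iterated extensions of unitary ones and need not be semisimple, and Simpson's statement is in fact an equivalence of categories between filtered bundles of this type and local systems, which is what handles the general case directly.
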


\begin{proof}
Thanks to \cite[Thoerem 1.18]{DPS94}, all numerically flat vector bundles are
successive extensions of
hermitian flat bundles.
By \cite[Corollary 3.10]{Sim}, all such types of bundles are local systems.
The proposition is proved. 
\end{proof}

\begin{remark}
This simple proof is due to C.Simpson. 
When $X$ is just a finite étale quotient of a torus, 
one can give a more elementary proof.
Since that proof is a bit long and technical, 
we omit the proof here and refer instead to our forthcoming PhD thesis. 
\end{remark}

We need a partial vanishing theorem with multiplier ideal sheaf 
(cf.\cite{Dem12} for the definition of multiplier ideal sheaves and analytic
singularities).

\begin{prop}\label{basicvanishing}
Let $L$ be a line bundle on a compact Kähler manifold $(X,\omega)$ of dimension $n$ and 
let $\varphi$ be a metric on $L$ with analytic singularities. 
Let 
$ \lambda_{1}(z)\leq \lambda_{2}(z)\leq\cdots\leq\lambda_{n}(z)$
be the eigenvalues of $\frac{i}{2\pi}\Theta_{\varphi}(L)$ with respect to $\omega$.
If
\begin{equation}\label{strictpositive}
\sum_{i=1}^{p} \lambda_{i}(z)\geq c
\end{equation}
for some constant $c> 0$ independent of $z\in X$,
then
$$H^{q}(X, K_{X}\otimes L\otimes \mathcal{I}(\varphi))=0 
\qquad \text{for }q\geq p.$$
\end{prop}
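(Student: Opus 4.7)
The plan is to reduce the statement to the classical Bochner--Kodaira--Nakano--Hörmander $L^{2}$ vanishing theorem, handling the multiplier ideal sheaf via a log-resolution of the analytic singularities of $\varphi$. The argument has three steps.

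The first step is linear-algebraic. Since the eigenvalues are listed in increasing order and $\lambda_{1}(z)+\cdots+\lambda_{p}(z)\geq c>0$, one necessarily has $\lambda_{p}(z)\geq c/p>0$, whence $\lambda_{i}(z)>0$ for every $i\geq p$. Consequently, for any $q\geq p$ and any $J\subset\{1,\ldots,n\}$ with $|J|=q$,
$$\sum_{i\in J}\lambda_{i}(z)\;\geq\;\sum_{i=1}^{q}\lambda_{i}(z)\;=\;\sum_{i=1}^{p}\lambda_{i}(z)+\sum_{i=p+1}^{q}\lambda_{i}(z)\;\geq\;c.$$
Because the eigenvalues of the Nakano operator $[\tfrac{i}{2\pi}\Theta_{\varphi}(L),\Lambda_{\omega}]$ on $(n,q)$-forms with values in $L$ are exactly the partial sums $\sum_{i\in J}\lambda_{i}$ of cardinality $q$, this yields the pointwise bound $\langle[i\Theta_{\varphi}(L),\Lambda_{\omega}]u,u\rangle_{\omega}\geq 2\pi c\,|u|_{\omega}^{2}$ on the smooth locus of $\varphi$, for every smooth $(n,q)$-form $u$ with values in $L$ and every $q\geq p$.

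Next, since $\varphi$ has analytic singularities, I would take a log-resolution $\mu:\widetilde X\to X$ such that $\mu^{\ast}\varphi=\sum_{j}a_{j}\log|s_{E_{j}}|^{2}+\psi$ with $\psi$ smooth and $E=\sum E_{j}$ a simple normal crossing divisor. Setting $L':=\mu^{\ast}L-\sum_{j}\lfloor a_{j}\rfloor E_{j}$, the standard description of multiplier ideals under log-resolutions combined with the projection formula and Grauert--Riemenschneider vanishing give
$$\mu_{\ast}\bigl(K_{\widetilde X}\otimes L'\bigr)=K_{X}\otimes L\otimes\mathcal{I}(\varphi),\qquad R^{i}\mu_{\ast}\bigl(K_{\widetilde X}\otimes L'\bigr)=0\ \text{for } i\geq 1.$$
The Leray spectral sequence therefore reduces the claim to the vanishing $H^{q}(\widetilde X,K_{\widetilde X}\otimes L')=0$ for $q\geq p$.

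For the last step, I would equip $\widetilde X$ with the family of Kähler forms $\widetilde\omega_{\varepsilon}=\mu^{\ast}\omega+\varepsilon\omega'$ for a reference Kähler form $\omega'$ on $\widetilde X$, and equip $L'$ with the smooth Hermitian metric induced by $\psi$, whose curvature is the smooth pullback of the absolutely continuous part of $\tfrac{i}{2\pi}\Theta_{\varphi}(L)$. The Bochner--Kodaira--Nakano--Hörmander inequality combined with the Step 1 estimate then yields $L^{2}$-solvability of the twisted $\overline\partial$-equation on $(n,q)$-forms with values in $L'$, and taking $\varepsilon\downarrow 0$ delivers the required vanishing. The main obstacle lies precisely here: the pulled-back form $\mu^{\ast}\omega$ degenerates along $E$, so one must verify that the eigenvalue estimate of Step 1 persists in the limit with constants sufficient to close the $L^{2}$ estimate independently of $\varepsilon$. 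The standard remedy is a Demailly-style regularization of the quasi-psh weight $\varphi$ coupled with a diagonal limit argument, which preserves the partial positivity of $\tfrac{i}{2\pi}\Theta_{\varphi}(L)$ up to an arbitrarily small loss absorbed by the $\varepsilon\omega'$ term.
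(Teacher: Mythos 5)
Your argument takes a genuinely different route from the paper's, and the last step has a real gap. The paper avoids resolution altogether: it picks a quasi-psh $\psi$, smooth off the singular locus $Y$ of $\varphi$, with $\mathcal{I}(\varphi)=\mathcal{I}(\varphi+\psi)$ and with $\widetilde\omega=c_1\omega+i\partial\overline{\partial}\psi$ a \emph{complete} Kähler metric on $X\setminus Y$; it then proves, by a two-case minimax estimate, that the sum of the $p$ smallest eigenvalues of $\tfrac{i}{2\pi}\Theta_{\varphi+\psi}(L)$ with respect to $\omega_\tau=\omega+\tau\widetilde\omega$ stays $\geq c/2$ uniformly for small $\tau$, solves $\overline{\partial}$ on the complete manifold $(X\setminus Y,\omega_\tau)$, lets $\tau\to 0$, and extends across $Y$. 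The crucial device is that $\psi$ is inserted \emph{simultaneously} into the weight and into the Kähler metric, so that near $Y$ the curvature of the weight and the metric blow up in a matched way, which is what makes the eigenvalue estimate survive.

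Your Steps 1 and 2 are sound: the identification of the partial sums $\sum_{i\in J}\lambda_i$, $|J|=q$, with the eigenvalues of the Nakano commutator on $(n,q)$-forms, and the Leray reduction via local vanishing on a log-resolution, are both standard. The gap is Step 3. The hypothesis controls eigenvalues only with respect to $\omega$, hence on $\widetilde X$ only with respect to the degenerate form $\mu^{*}\omega$ and only off $E$; but you then solve $\overline{\partial}$ with $\widetilde\omega_\varepsilon=\mu^{*}\omega+\varepsilon\omega'$, and along $E$ your curvature eigenvalues with respect to $\widetilde\omega_\varepsilon$ are uncontrolled. Concretely, for $v\in\ker d\mu$ one has $\mu^{*}\omega(v,v)=0$, so the relevant eigenvalue is $\Theta'(v,v)/(\varepsilon\omega'(v,v))$, and if $\Theta'(v,v)<0$ this tends to $-\infty$ as $\varepsilon\downarrow 0$; nothing in the hypothesis forbids that. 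Invoking a Demailly-style regularization of $\varphi$ cannot repair this: after the log-resolution the weight on $L'$ is already essentially smooth (up to the harmless fractional log poles $\sum\{a_j\}\log|s_{E_j}|^2$), and the obstruction is the degeneracy of $\mu^{*}\omega$ near $E$, not any remaining singularity of the weight. To close the $L^2$ estimate uniformly in $\varepsilon$ you would need a mechanism tying the degeneration of the background metric to the curvature near $E$, which is exactly what the paper's joint insertion of $\psi$ into both the weight and the metric $\omega_\tau$ supplies, at the cost of the delicate case analysis in its Claim.
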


\begin{proof}

Since $\varphi$ has analytic singularities, 
there exists an analytic subvariety $Y$
such that
$\varphi$ is smooth on $X\setminus Y$.
Moreover it is known that there exists a quasi-psh function $\psi$ on $X$, 
smooth on $X\setminus Y$ 
such that 
$\mathcal{I}(\varphi)=\mathcal{I}(\varphi+\psi)$
and
$\widetilde{\omega}=c_{1}\omega+ i\partial\overline{\partial}\psi$
is a complete metric on $X\setminus Y$ for some fixed constant $c_{1}$ 
with $0<  c_{1}\ll c$ (cf. \cite[Section 5, 6 , Chapter VIII]{Dem}).
To prove the proposition, it is therefore equivalent to prove that
\begin{equation}\label{equvvanishing}
H^{q}(X, K_{X}\otimes L\otimes \mathcal{I}(\varphi+\psi))=0 
\qquad \text{for } q\geq p.
\end{equation}

We consider the new metric $\phi=\varphi+\psi$ on $L$ 
(i.e., the new metric is $\|\cdot\|_{\varphi}\cdot e^{-\psi}$).
Then 
\begin{equation}\label{curvaturecal}
\frac{i}{2\pi}\Theta_{\phi}(L)=\frac{i}{2\pi}\Theta_{\varphi}(L)+dd^{c}\psi
=(\frac{i}{2\pi}\Theta_{\varphi}(L)-c_{1}\omega)+\widetilde{\omega}.
\end{equation}
Since $\varphi$ is a quasi-psh function, there exists a constant $M$
such that
\begin{equation}\label{add2}
\frac{i}{2\pi}\Theta_{\varphi}(L)-c_{1}\omega \geq -M\omega .
\end{equation}
Combining \eqref{add2} with \eqref{curvaturecal}, we obtain
\begin{equation}\label{addd2}
\frac{i}{2\pi}\Theta_{\phi}(L)\geq -M\omega + \widetilde{\omega}.
\end{equation}
Let $\omega_{\tau}=\omega+\tau\widetilde{\omega}$. 
We claim that the sum of $p$-smallest eigenvalues of $
\frac{i}{2\pi}\Theta_{\phi}(L)$ 
with respect to $\omega_{\tau}$ 
is larger than $\frac{c}{2}$, 
for any $\tau\leq\frac{c_1}{1000(M+c)\cdot n\cdot (1+c_1)}$.

Proof of the claim:
By the minimax principle, 
it is sufficient to prove that for any $p$-dimensional subspace $V$ of $(T_{X})_{x}$, 
we have
\begin{equation}\label{sumestimate}
\sum_{i=1}^{p}\langle \frac{i}{2\pi}\Theta_{\phi}(L) e_{i}, e_{i}\rangle\geq \frac{c}{2}
\end{equation}
where $\{e_{i}\}_{i=1}^{n}$ is an orthonormal basis of $V$ with respect to $\omega_{\tau}$.

We first consider the case when $V$ contains an element $e$ such that
\begin{equation}\label{add1}
\widetilde{\omega}(e, e)\geq \frac{c_{1}}{\tau} \qquad\text{and}\qquad |e|_{\omega}=1 .
\end{equation}
By the choice of $\tau$, we have
\begin{equation}\label{addd1}
\widetilde{\omega}(e, e)\geq 1000 n\cdot (M+c).
\end{equation}
Thanks to \eqref{addd2} and \eqref{addd1}, 
we have
$$\langle \frac{i}{2\pi}\Theta_{\phi}(L) e, e\rangle
\geq -M + \widetilde{\omega}(e , e)
\geq \frac{\widetilde{\omega}(e , e)}{2}.$$
Observing moreover that the construction of $\omega_{\tau}$ implies
$$\langle e,e \rangle_{\omega_{\tau}}\leq 1+\tau\cdot\widetilde{\omega}(e, e) ,$$
then
\begin{equation}\label{curvchange}
\frac{\langle \frac{i}{2\pi}\Theta_{\phi}(L) e, e\rangle}{\langle e,e \rangle_{\omega_{\tau}}}\geq 
\frac{\widetilde{\omega}(e, e)}{2+2\tau\widetilde{\omega}(e, e)}\geq 
\frac{1}{2}\min \{\frac{\widetilde{\omega}(e, e)}{2}, \frac{1}{2\tau} \}\geq n(M+c).
\end{equation}
Noting that \eqref{addd2} implies that 
\begin{equation}\label{add3}
\langle\frac{i}{2\pi}\Theta_{\phi}(L) e', e'\rangle\geq -M \omega (e', e')\geq -M\omega_{\tau}(e', e')
\end{equation}
for any $e'\in V$, 
\eqref{curvchange} and \eqref{add3} imply thus the inequality \eqref{sumestimate}.

In the case when 
$$\tau\cdot\widetilde{\omega}(e, e)\leq c_{1}\qquad\text{for any } e\in V \text{ with } |e|_{\omega}=1 ,$$
we have
\begin{equation}\label{difference}
|\omega_{\tau}-\omega |_{\omega}\leq c_{1} \qquad \text{on } V,
\end{equation}
i.e., for considering the restriction on $V$, 
the difference between $\omega_{\tau}|_V$ and $\omega|_V$ is controled by $c_{1}\omega$.
On the other hand, using again the minimax principle, 
\eqref{strictpositive} implies that 
\begin{equation}\label{condition1}
\sum_{i=1}^{p}\langle \frac{i}{2\pi}\Theta_{\varphi}(L) \widetilde{e}_{i}, \widetilde{e}_{i}\rangle\geq c
\end{equation}
for any orthonormal basis $\{\widetilde{e}_{i}\}$ of $V$ with respect to $\omega$. 
By \eqref{curvaturecal}, we have
\begin{equation}\label{addlast}
\frac{i}{2\pi}\Theta_{\phi}(L)\geq (\frac{i}{2\pi}\Theta_{\varphi}(L)-c_{1}\omega).
\end{equation}
Combining \eqref{addlast} with \eqref{condition1} and the smallness assumption on $c_{1}$, 
we have
\begin{equation}\label{finalestimate}
\sum_{i=1}^{p}\langle \frac{i}{2\pi}\Theta_{\phi}(L) \widetilde{e}_{i}, \widetilde{e}_{i}\rangle
\geq \frac{3c}{4}.
\end{equation}
Using again that $c_{1}$ is a fixed constant small enough with respect to $c$, 
\eqref{difference} and \eqref{finalestimate} imply the inequality \eqref{sumestimate}. 
The claim is proved.
\vspace{10 pt}

Let $f$ be a $L$-valued closed $(n,q)$-form such that 
$$\int_{X}|f|^{2}e^{-2\varphi-2\psi}\omega^{n}< +\infty .$$
To prove Proposition \ref{basicvanishing}, 
it is equivalent to find a $L$-valued $(n,q-1)$-form $g$ such that
$$f=\overline{\partial}g \qquad\text{and}\qquad \int_{X}|g|^{2}e^{-2\varphi-2\psi}\omega^{n}< +\infty .$$
Thanks to our claim, we can use the standard $L^{2}$ estimate on 
$$(X\setminus Y , \omega_{\tau}, L, e^{-\varphi-\psi}) .$$
It is known that 
\begin{equation}\label{estimationchangingmetric}
\int_{X\setminus
Y}|f|^{2}e^{-2\phi}\omega_{\tau}^{n}\leq 
\int_{X\setminus
Y}|f|^{2}e^{-2\phi}\omega^{n}< +\infty .
\end{equation}
Then we can find a $g_{\tau}$ such that
$f=\overline{\partial}g_{\tau}$ and
$$\int_{X\setminus Y}|g|^{2}e^{-2\phi}\omega_{\tau}^{n}\leq C
\int_{X\setminus Y}|f|^{2}e^{-2\phi}\omega_{\tau}^{n}< +\infty .$$
for a constant $C$ depending only on $c$ (i.e., $C$ is independent of $\tau$).
Letting $g=\lim\limits_{\tau\rightarrow 0} g_{\tau}$, by \eqref{estimationchangingmetric},
we get
$f=\overline{\partial}g$ on $X\setminus Y$ and
$$\int_{X\setminus Y}|g|^{2}e^{-2\phi}\omega^{n}< +\infty .$$ 
\cite[Lemma 11.10]{Dem12} implies that such $g$ can be extended to the whole
space $X$, and
$f=\overline{\partial}g $ on $X$.
Therefore \eqref{equvvanishing} is proved.
\end{proof}

As a corollary of \cite[Theorem 3.14]{DPS94}, we prove that
every compact Kähler manifold with nef tangent bundle
admits a smooth fibration to an étale Galois quotient of a torus.
\begin{lemme}
Let $X$ be a compact Kähler manifold with nef tangent bundle
and let $\widetilde{X}\rightarrow X$ be an étale Galois cover with group $G$
such that $\widetilde{X}$ satisfies Theorem 1.1 (i.e.,
\cite[Theorem 3.14]{DPS94}). Then $G$ induces a free automorphism group on $T=\Alb(\widetilde{X})$ 
and we have the following commutative diagram
$$\begin{CD}
  \widetilde{X} @>>> X\\
 @VV\widetilde{\pi} V @VV\pi V\\
  T @>>> T/ G
  \end{CD}
$$
where $\widetilde{\pi}: \widetilde{X} \rightarrow T$ is the Albanese map in Theorem 1.1, 
and $T/ G$ is an étale Galois quotient of the torus $T$.
\end{lemme}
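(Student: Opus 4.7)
The plan is to first construct a canonical action of $G$ on $T$ using the universal property of the Albanese, and then show it is free by exploiting the Fano nature of the fibres of $\widetilde\pi$.

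For the construction, I would use that for each $\sigma\in G$, composing the Albanese map with $\sigma$ gives a holomorphic map $\widetilde\pi\circ\sigma:\widetilde X\to T$, which by the universal property of $T=\Alb(\widetilde X)$ factors as $\bar\sigma\circ\widetilde\pi$ for a unique affine automorphism $\bar\sigma:T\to T$. The assignment $\sigma\mapsto\bar\sigma$ is a group homomorphism, and by construction $\widetilde\pi$ is $G$-equivariant; passing to $G$-quotients produces the map $\pi:X=\widetilde X/G\to T/G$ and makes the square commute.

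The main step is to show the induced $G$-action on $T$ is free. I would argue by contradiction: suppose some $\sigma\neq\id$ in $G$ has $\bar\sigma(t_0)=t_0$ for some $t_0\in T$. Then $\sigma$ preserves the fibre $F=\widetilde\pi^{-1}(t_0)$, and because $G$ acts freely on $\widetilde X$, the restriction $\sigma|_F$ is free. By Theorem 1.1, $\widetilde\pi$ is a smooth (hence connected-fibre) fibration with $-K_{\widetilde X}$ relatively ample; since $K_T$ is trivial, adjunction gives $K_F=K_{\widetilde X}|_F$ and therefore $-K_F$ is ample on $F$, so $F$ is Fano. Kodaira vanishing applied to the ample line bundle $-K_F$ yields $h^q(F,\mathcal O_F)=0$ for $q\geq 1$, whence $\chi(F,\mathcal O_F)=1$. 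On the other hand $F\to F/\langle\sigma\rangle$ is étale of degree $|\sigma|$, and multiplicativity of the holomorphic Euler characteristic under étale covers (via Hirzebruch--Riemann--Roch applied to $\operatorname{Todd}(T_F)=\widetilde p^{\,*}\operatorname{Todd}(T_{F/\langle\sigma\rangle})$) gives $\chi(F,\mathcal O_F)=|\sigma|\cdot\chi(F/\langle\sigma\rangle,\mathcal O)$, so $\chi(F/\langle\sigma\rangle,\mathcal O)=1/|\sigma|$. This is an integer only when $|\sigma|=1$, the desired contradiction.

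Once freeness is established, $T/G$ is automatically a smooth compact complex manifold and $T\to T/G$ is an étale Galois cover with group $G$, so the diagram and all of its properties follow. The principal obstacle is precisely the freeness of the action on $T$: the abstract construction from the Albanese only yields an affine action, which a priori could have fixed points; what rules this out is the Fano structure of the fibres combined with Kodaira vanishing, and this is where the hypothesis that $-K_{\widetilde X}$ is relatively ample is essential.
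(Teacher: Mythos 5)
Your proof is correct and follows the same approach as the paper: construct the $G$-action on $T$ via the Albanese universal property, then derive a contradiction to freeness from the Euler characteristic of a Fano fiber (which is $1$ by vanishing) together with multiplicativity of $\chi(\cdot,\mathcal{O})$ under the étale cover $F\to F/\langle\sigma\rangle$. The only cosmetic difference is that the paper also observes $\widetilde{X}_{t_0}/\langle g\rangle$ is again Fano and applies Nadel vanishing there to get $\chi=1$ directly, while you invoke integrality of $\chi(F/\langle\sigma\rangle,\mathcal{O})$ to force $|\sigma|=1$; both resolve the contradiction equally well, and your version slightly economizes by not needing the quotient to be Fano.
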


\begin{proof}
By the universal property of Albanese map, 
for any $ g\in G$, $g$ induces an automorphism on $T$, 
and the action of $g$ on $\widetilde{X}$ maps fibers to fibers.
We need hence only to prove that $G$ acts on $T$ freely.

Suppose by contradiction that $g(t_{0})=t_{0}$ for some $t_{0}\in T$ and $g\in G$.
Let $\langle g\rangle$ be the subgroup generated by $g$.
Since $g$ acts on $\widetilde{X}$ without fixed point,
$g$ induces an automorphism on $\widetilde{X}_{t_{0}}$ without fixed points, 
where $\widetilde{X}_{t_{0}}$ is the fiber of $\widetilde{\pi}$ over $t_{0}$.
By the same reason, any non trivial elements in $\langle g\rangle$ induces 
an automorphism on $\widetilde{X}_{t_{0}}$ without fixed points.
Combining this with the fact that $\widetilde{X}_{t_{0}}$ is a Fano manifold, 
the quotient $\widetilde{X}_{t_{0}}/ \langle g \rangle$ is hence also a Fano manifold.
Thus the Nadel vanishing theorem implies that
\begin{equation}\label{alternatesum}
\chi(\widetilde{X}_{t_{0}},
\mathcal{O}_{\widetilde{X}_{t_{0}}})=\chi(\widetilde{X}_{t_{0}}/ \langle g \rangle,
\mathcal{O}_{\widetilde{X}_{t_{0}}/\langle g \rangle})=1 .
\end{equation}
\eqref{alternatesum} contradicts with the fact that 
the étale cover $\widetilde{X}_{t_{0}}\rightarrow \widetilde{X}_{t_{0}}/ \langle g \rangle$
implies
$$\chi(\widetilde{X}_{t_{0}},
\mathcal{O}_{\widetilde{X}_{t_{0}}})=|\langle g \rangle|\cdot \chi(\widetilde{X}_{t_{0}}/ \langle g \rangle,
\mathcal{O}_{\widetilde{X}_{t_{0}}/\langle g \rangle}) .$$
Then $G$ factorizes to an étale Galois action on $T$, and the lemma is proved.

\end{proof}

\section{Deformation of compact Kähler manifolds with hermitian semipositive anticanonical bundles 
or nonpositive bisectional curvatures}

We first treat a special case, 
i.e., how to appproximate compact manifolds with numerically trivial canonical bundles by projective varieties.
To prove the statement, we need the following two propositions.

\begin{voin}
Assume that a deformation unobstructed compact Kähler manifold $X$ has a Kähler
class $\omega$
satisfying the following condition: the interior product
$$\omega\wedge: H^{1}(X, T_{X})\rightarrow H^{2}(X, \mathcal{O}_{X})$$
is surjective. Then $X$ can be approximated by projective varieties.
\end{voin}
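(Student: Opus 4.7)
The plan is to use the variation of Hodge structure on $H^2$ together with the surjectivity hypothesis to show that the Hodge locus of a Kähler class moves transversally to the natural period map, and then to perturb by nearby rational classes. Concretely, by deformation unobstructedness we choose a smooth family $\pi:\sX\rightarrow\Delta$ with surjective Kodaira-Spencer map $\rho:T_{0}\Delta\twoheadrightarrow H^{1}(X,T_{X})$. Shrinking $\Delta$, the local system $R^{2}\pi_{*}\bR$ is trivialized, so we may regard any fixed class $\alpha\in H^{2}(X,\bR)$ as a horizontal section $\alpha_{t}\in H^{2}(X_{t},\bR)$. On each fiber we project to the $(0,2)$-part via the Hodge decomposition and get a smooth map
$$\Psi:H^{2}(X,\bR)\times\Delta\rightarrow H^{0,2}(X),\qquad (\alpha,t)\mapsto \alpha_{t}^{0,2},$$
where the target is identified with $H^{2}(X,\sO_{X})$ via a second trivialization. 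The class $\omega$ is of type $(1,1)$, hence $\Psi(\omega,0)=0$.

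Next I would compute the partial derivative of $\Psi$ in the $t$-variable at $(\omega,0)$. By the standard description of the differential of the period map (Griffiths transversality and the identification of the Gauss-Manin connection with cup product by Kodaira-Spencer classes), one has
$$\frac{\partial\Psi}{\partial t}(\omega,0)(v)=\rho(v)\lrcorner\,\omega\in H^{1}(X,\Omega^{1}_{X})\text{ projected to }H^{0,2}(X)=\omega\wedge\rho(v)\in H^{2}(X,\sO_{X}),$$
i.e. the composition $T_{0}\Delta\xrightarrow{\rho}H^{1}(X,T_{X})\xrightarrow{\omega\wedge}H^{2}(X,\sO_{X})$. Both arrows are surjective by hypothesis, so this partial differential is surjective onto the $\bR$-vector space $H^{0,2}(X)$.

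The implicit function theorem then provides, for every $\alpha$ sufficiently close to $\omega$ in $H^{2}(X,\bR)$, a point $t(\alpha)\in\Delta$ close to $0$ with $\Psi(\alpha,t(\alpha))=0$, i.e. $\alpha_{t(\alpha)}$ is of type $(1,1)$ on $X_{t(\alpha)}$; moreover $t(\alpha)\rightarrow 0$ as $\alpha\rightarrow\omega$. Finally I would exploit density: pick rational classes $\alpha_{n}\in H^{2}(X,\bQ)$ with $\alpha_{n}\rightarrow\omega$. For $n$ large the class $(\alpha_{n})_{t(\alpha_{n})}$ is a rational $(1,1)$-class on $X_{t(\alpha_{n})}$, and it remains positive by openness of the Kähler cone. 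By the Kodaira embedding theorem the fiber $X_{t(\alpha_{n})}$ is projective, while $t(\alpha_{n})\rightarrow 0$, proving the conclusion.

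The main conceptual obstacle is the Hodge-theoretic derivative computation in the second paragraph: one must be careful that the two a priori independent trivializations (of $R^{2}\pi_{*}\bR$ and of the bundle of $H^{0,2}$'s) combine to produce exactly the map $\omega\wedge\rho(v)$, so that the hypothesis of the proposition is precisely what ensures the transversality needed for the implicit function theorem. Once this derivative is identified, the rest of the argument is a routine combination of the implicit function theorem with the density of $H^{2}(X,\bQ)$ in $H^{2}(X,\bR)$ and the openness of the Kähler condition.
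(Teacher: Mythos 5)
The paper does not prove this proposition itself; it cites it from Voisin's work (and points to the density criterion \cite[Proposition 5.20]{Voi2} as the key ingredient). Your proposal correctly reconstructs exactly that argument: trivialize the VHS on $H^2$, observe that the derivative of the $(0,2)$-projection of the flat transport of $\omega$ is the composition of the Kodaira--Spencer map with $\omega\wedge\colon H^1(X,T_X)\to H^2(X,\mathcal{O}_X)$, apply the implicit function theorem to find parameters where nearby rational classes become of type $(1,1)$, and conclude projectivity by openness of the Kähler cone together with Kodaira embedding. This is the standard proof and matches the approach the paper refers to. The only small imprecision worth flagging is in the implicit-function-theorem step: since $\dim\Delta$ may exceed $\dim_{\bR}H^{0,2}(X)$, you do not get a unique $t(\alpha)$; one should say that after splitting $T_0\Delta$ the theorem produces a continuous (in fact smooth) local section $\alpha\mapsto t(\alpha)$ of solutions with $t(\omega)=0$, which is all that is needed to make $t(\alpha_n)\to 0$.
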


\begin{remark}
The proof of this proposition is based on a density criterion (cf. \cite[Proposition 5.20]{Voi2})
which will also be used in Proposition 3.3 and  Proposition 3.5.
We need moreover a slightly generalized version of \cite[Proposition 3.3]{Voi1}. 
In fact, we can suppose $\omega$ to be a nef class in $X$, 
since the surjectivity is preserved under small perturbation.
Moreover, if $X$ is not necessarily unobstructed, 
we just need a deformation unobstructed subspace $V$ of $H^{1}(X, T_{X})$ such
that
$$\omega\wedge V\rightarrow H^{2}(X, \mathcal{O}_{X})$$
is surjective.
In summary, we have the following variant of the above proposition.
\end{remark}

\begin{voin'}
Let $\mathcal{X}\rightarrow \Delta$ be a deformation of a compact Kähler manifold $X$ 
and $V$ be the image of Kodaira-Spencer map of this deformation.
If there exists a nef class $\omega$ in $H^{1,1}(X)$ such that
$$\omega\wedge V\rightarrow H^{2}(X, \mathcal{O}_{X})$$
is surjective,
then there exists a sequence $t_{i}\rightarrow 0$ in $\Delta$ 
such that all the fibers $X_{t_{i}}$ are projective.
\end{voin'}

In general, it is difficult to check the surjectivity in the above proposition.
By a well-known observation communicated to us by J-P. Demailly, one can prove that
the above morphism is surjective when $-K_{X}$ is hermitian semipositive by
using the following Hard Lefschetz theorem.

\begin{Lef}
(cf. \cite[Corollary 15.2]{Dem12})
Let $(L,h)$ be a semi-positive line bundle on a compact Kähler manifold $(X,\omega)$
of dimension $n$ i.e., $h$ is a smooth metric on $L$ and $i\Theta_{h}(L)\geq 0$.
Then the wedge multiplication operator $\omega^{q}\wedge$ induces
a surjective morphism
$$\omega^{q}\wedge: H^{0}(X, \Omega_{X}^{n-q}\otimes L)\rightarrow H^{q}(X, \Omega_{X}^{n}\otimes L).$$
\end{Lef}

Using the above two propositions, we can reprove the following well-known fact.
\begin{prop}\label{easyappli}
Let $X$ be a compact Kähler manifold with $c_{1}(X)_{\mathbb{R}}=0$. 
Then it can be approximated by projective varieties.
\end{prop}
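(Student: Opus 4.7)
The plan is to verify the hypotheses of Version B of Proposition 3.3 in [Voi05] for the Kuranishi deformation of $X$, combining the Calabi-Yau theorem with the Hard Lefschetz formulation stated above.

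First I would invoke the Calabi-Yau theorem: since $c_{1}(X)_{\mathbb{R}}=0$, every Kähler class contains a Ricci-flat representative $\omega$. The induced metric on $-K_{X}=\det T_{X}$ then has identically zero curvature, so $-K_{X}$ is hermitian semipositive (even flat). Applying the Hard Lefschetz theorem quoted above with $L=-K_{X}$ and $q=2$ yields a surjection
$$\omega^{2}\wedge\colon H^{0}(X,\Omega_{X}^{n-2}\otimes(-K_{X}))\twoheadrightarrow H^{2}(X,\Omega_{X}^{n}\otimes(-K_{X}))=H^{2}(X,\mathcal{O}_{X}).$$
Using the canonical identification $T_{X}\cong\Omega_{X}^{n-1}\otimes(-K_{X})$, this map factors as
$$H^{0}(X,\Omega_{X}^{n-2}\otimes(-K_{X}))\xrightarrow{\omega\wedge}H^{1}(X,T_{X})\xrightarrow{\omega\wedge}H^{2}(X,\mathcal{O}_{X}),$$
so the second arrow, which is precisely the wedge-product map appearing in Voisin's proposition, is surjective.

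Next I would show that $X$ is deformation unobstructed. By the Bogomolov-Tian-Todorov theorem, any compact Kähler manifold with trivial canonical bundle is unobstructed. Since in our setting $K_{X}$ is only numerically trivial, I would first apply the Beauville-Bogomolov decomposition theorem to obtain a finite étale Galois cover $\widetilde{X}\to X$ with group $G$ such that $K_{\widetilde{X}}=\mathcal{O}_{\widetilde{X}}$ and $\widetilde{X}$ is a product of a complex torus, strict Calabi-Yau factors and irreducible holomorphic symplectic factors, each of which is unobstructed. Since $H^{1}(X,T_{X})$ and $H^{2}(X,T_{X})$ identify with the $G$-invariant parts of $H^{1}(\widetilde{X},T_{\widetilde{X}})$ and $H^{2}(\widetilde{X},T_{\widetilde{X}})$ respectively, and the Kuranishi obstruction maps are $G$-equivariant, the unobstructedness of $\widetilde{X}$ descends, giving $X$ a smooth Kuranishi family $\mathcal{X}\to\Delta$ whose Kodaira-Spencer map is an isomorphism $T_{0}\Delta\xrightarrow{\sim} H^{1}(X,T_{X})$.

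Combining the two ingredients, Version B of Proposition 3.3 in [Voi05] applies with $V=H^{1}(X,T_{X})$ and the Ricci-flat Kähler class $\omega$, producing a sequence $t_{i}\to 0$ in $\Delta$ along which $X_{t_{i}}$ is projective. The main point requiring care is the unobstructedness step: one has to invoke Beauville-Bogomolov to reduce to the case $K_{\widetilde{X}}=\mathcal{O}_{\widetilde{X}}$ and then descend unobstructedness under the finite étale $G$-cover, both of which are classical. The rest is just the observation of J.-P.\ Demailly recorded above, specialised to the situation where $-K_{X}$ is flat.
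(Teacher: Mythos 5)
Your argument is correct and in its essential mechanics coincides with the paper's: both establish surjectivity of $\omega\wedge\colon H^{1}(X,T_{X})\to H^{2}(X,\mathcal{O}_{X})$ by applying the Hard Lefschetz theorem to $(-K_{X},h)$ with $h$ a flat metric, factoring $\omega^{2}\wedge$ through $H^{1}(X,\Omega_{X}^{n-1}\otimes(-K_{X}))\cong H^{1}(X,T_{X})$, and then invoke Voisin's density criterion. Where you diverge is in the preliminaries. For the flat metric on $-K_{X}$ you call on Yau's theorem, while the paper needs only the $\partial\overline{\partial}$-lemma (any smooth metric can be corrected by $e^{f}$ to have zero curvature once $c_{1}(K_{X})_{\mathbb{R}}=0$); Calabi--Yau is overkill here. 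For unobstructedness the paper cites Ran's torsion version of Tian--Todorov directly from the fact that $K_{X}$ is a torsion line bundle, whereas you reconstruct it by passing to the finite étale Galois cover $\widetilde{X}$ with $K_{\widetilde{X}}=\mathcal{O}_{\widetilde{X}}$ (for which only Beauville's cover, not the full Beauville--Bogomolov product decomposition, is actually needed) and then descending unobstructedness along the $G$-action. That descent argument is sound -- it is essentially the content of Lemma 3.2 of this paper, proved there by $G$-averaging the Kuranishi power series -- but it is worth being explicit that you are using the equivalence $\mathrm{Def}(X)\cong\mathrm{Def}^{G}(\widetilde{X})$ together with the fact that the fixed locus of a finite group on a smooth germ is smooth, since simply saying ``the obstruction maps are $G$-equivariant'' elides this. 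Net effect: your proof is a little heavier than the paper's but reaches the same conclusion by the same core observation of Demailly.
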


\begin{proof}
By a theorem due to Beauville, there exists a finite Galois cover $\widetilde{X}\rightarrow X$ 
such that $K_{\widetilde{X}}$ is trivial. 
Then $K_{X}$ is a torsion line bundle. 
Using the Tian-Todorov theorem (cf. the torsion version in \cite{Ran}), $X$ is
unobstructed.
To prove Proposition \ref{easyappli}, by \cite[Proposition 3.3]{Voi1}, 
we just need to check that 
\begin{equation}\label{surjecitvedensity}
\omega\wedge: H^{1}(X, T_{X})\rightarrow H^{2}(X, \mathcal{O}_{X})
\end{equation}
is surjective for some Kähler class $\omega$.

In fact, since $c_{1}(K_{X})_{\mathbb{R}}=0$, 
there exists a smooth metric $h$ on $-K_{X}$ such that $i\Theta_{h}(-K_{X})=0$.
Thus $( -K_{X}, h )$ is semipositive. 
Then the Hard Lefschetz theorem above told us that for any Kähler metric $\omega$, the morphism
\begin{equation}\label{firstsurject}
\omega^{2}\wedge: H^{0}(X, \Omega_X ^{n-2}\otimes (-K_{X}))\rightarrow H^{2}(X, K_{X}\otimes (-K_{X}))
\end{equation}
is surjective.
Observing moreover that the image of \eqref{firstsurject}
is contained in the image of
$$\omega\wedge H^{1}(X, \Omega_X ^{n-1}\otimes (-K_{X}))=\omega\wedge H^{1}(X, T_{X}),$$
i.e., the image of \eqref{surjecitvedensity}.
Then
\eqref{surjecitvedensity} is surjective.
Using \cite[Proposition 3.3]{Voi1}, the proposition is proved. 
\end{proof}

We now begin to prove the main proposition in this section, i.e.,
one can approximate compact Kähler manifolds with hermitian
semipositive anticanonical bundles by projective varieties.
The main tool is the following structure theorem in \cite{DPS96}:
\begin{str}
Let $X$ be a compact Kähler manifold with $-K_{X}$ hermitian semipositive. Then

$(i)$ The universal cover $\widetilde{X}$ admits a holomorphic and isometric splitting
$$\widetilde{X}=\mathbb{C}^{q}\times Y_{1}\times Y_{2}$$
with $Y_{1}$ being the product of either Calabi-Yau manifolds or symplectic manifolds,
and $Y_{2}$ being projective. Moreover $H^{0}(Y_{2}, \Omega_{Y_{2}}^{\otimes q})=0$ for $q\geq 1$.

$(ii)$ There is a normal subgroup $\Gamma_{1}\subset \pi_{1}(X)$ of finite index 
such that $\widehat{X}=\widetilde{X}/ \Gamma_{1}$ has a smooth fibration to a Ricci-flat compact manifold:
$F=(\mathbb{C}^{q}\times Y_{1}) / \Gamma_{1}$
with fibers $Y_{2}$.
\end{str}

\begin{remark}\label{vanishingoftensor}
Since $\Omega_{Y_{2}}^{q}\subset \Omega_{Y_{2}}^{\otimes q}$,
the above structure theorem implies that
$$H^{0}(Y_{2}, \Omega_{Y_{2}}^{q})=0 .$$
Therefore $H^{q}(Y_{2}, \mathcal{O}_{Y_2})=0$ by duality.
\end{remark}

\begin{remark}\label{fibrationprojecitivity}
The Ricci semipositive metric on $X$ induces a $\pi_1 (X)$-invariant metric $\omega_{Y_2}$ on $Y_2$.
Thanks to Remark \ref{vanishingoftensor}, we can suppose that $\omega_{Y_2}\in H^{1,1}(Y_2, \mathbb{Q})$.
Therefore $\omega_{Y_2}$ induces a rational coefficience, 
closed semipositive $(1,1)$-form on $\widehat{X}$, which is strictly positive on the fibers of the fibration in $(ii)$
of the above Structure Theorem.
\end{remark}

We need also the following lemma.
\begin{lemme}
Let $X$ be a compact Kähler manifold with $K_{X}=\mathcal{O}_{X}$, 
and $G$ a finite subgroup of the biholomorphic group $\Aut (X)$.
Then there exists a local deformation of $X$: $\mathcal{X}\rightarrow \Delta$
such that
the image of the Kodaira-Spencer map of this deformation is equal to $H^{1}(X, T_{X})^{G-\inv}$ and
$\mathcal{X}$ admits a holomorphic $G$-action fiberwise, 
where $H^{1}(X, T_{X})^{G-\inv}$ is the $G$-invariant subspace of $H^{1}(X,
T_{X})$.
\end{lemme}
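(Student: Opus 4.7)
The plan is to construct the family via a $G$-equivariant Tian--Todorov argument. Since $K_{X}=\mathcal{O}_{X}$, fix a nowhere-vanishing holomorphic $n$-form $\Omega$ on $X$, and, after averaging, choose a $G$-invariant Kähler metric $\omega$. The Kuranishi--Tian--Todorov construction produces, for every $t$ in a small ball $\Delta\subset H^{1}(X,T_{X})$ around $0$, a convergent power series
$$\phi(t)=\sum_{k\geq 1}\phi_{k}(t)$$
of $T_{X}$-valued $(0,1)$-forms solving the Maurer--Cartan equation $\bar\partial\phi+\tfrac{1}{2}[\phi,\phi]=0$, with $\phi_{1}(t)=t$ (harmonic with respect to $\omega$), and higher $\phi_{k}$ defined inductively through a canonical right-inverse of $\bar\partial$ built from the $\partial\bar\partial$-lemma together with the Calabi--Yau isomorphism $T_{X}\simeq \Omega_{X}^{n-1}$ given by contraction with $\Omega$.

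I would restrict the parameter $t$ to $\Delta^{G-\inv}:=\Delta\cap H^{1}(X,T_{X})^{G-\inv}$ and prove by induction on $k$ that each $\phi_{k}(t)$ is then $G$-invariant. The key points are that the bracket $[\cdot,\cdot]$ and $\bar\partial$ are intrinsic hence $G$-equivariant; harmonic projection with respect to the $G$-invariant metric $\omega$ commutes with the $G$-action; and the contraction with $\Omega$ enters once to convert $T_{X}$-valued forms to $\Omega_{X}^{n-1}$-valued forms and once in the inverse direction, so any character by which $G$ acts on $H^{0}(X,K_{X})$ cancels. Consequently $\phi(t)$ is $G$-invariant, the deformed complex structure $J+\phi(t)$ is preserved by $G$, and the resulting smooth family $\pi:\mathcal{X}\rightarrow\Delta^{G-\inv}$ carries a fiberwise holomorphic $G$-action. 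Since $\phi_{1}(t)=t$, the Kodaira--Spencer map of $\pi$ at the central fiber is the identity on $H^{1}(X,T_{X})^{G-\inv}$, giving the required image.

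The main technical obstacle is verifying the $G$-equivariance of every step of the Tian--Todorov recursion, which reduces to the $G$-invariance of $\omega$ and the character cancellation described above; convergence is standard. An alternative, less computational route is to invoke the fact that Tian--Todorov makes the Kuranishi family of $X$ smooth of dimension $h^{1}(X,T_{X})$; by universality the $G$-action on $X$ lifts to the family with a linear induced action on the base at the origin, and restriction to the smooth $G$-fixed locus, whose tangent space is exactly $H^{1}(X,T_{X})^{G-\inv}$, produces the desired deformation.
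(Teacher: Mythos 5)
Your proposal is correct and follows the same overall strategy as the paper: apply the Tian--Todorov power series construction and ensure $G$-invariance at each step of the recursion. The one genuine technical difference lies in \emph{how} each $\phi_k$ is made $G$-invariant. You argue that the canonical right-inverse of $\bar\partial$ (built from harmonic projection with a $G$-invariant metric and the contraction with $\Omega$) is itself $G$-equivariant, so the recursion automatically produces $G$-invariant coefficients; the character issue you raise about $G$ possibly acting nontrivially on $H^0(X,K_X)$ and why it cancels is a real point that needs the observation you make. The paper instead sidesteps all equivariance verification for the Green operator and for contraction with $\Omega$: at each inductive step it takes \emph{any} solution $s_\mu$ of $\bar\partial s_\mu=\tfrac12\sum[\phi_\lambda,\phi_{\mu-\lambda}]$, observes that the right-hand side is $G$-invariant because the bracket of $G$-invariant forms is $G$-invariant, and then replaces $s_\mu$ by the average $\tfrac{1}{|G|}\sum_{g\in G}g^*s_\mu$, which still solves the equation (since $\bar\partial$ commutes with pullback and the RHS is already invariant) and is manifestly $G$-invariant. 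The averaging trick buys simplicity: one never has to check equivariance of the Hodge-theoretic machinery, only $G$-invariance of the bracket. Your route, conversely, yields a canonical family without the $\tfrac{1}{|G|}$ normalization, and your second alternative (passing to the $G$-fixed locus of the universal Kuranishi base) is a cleaner conceptual variant not used in the paper, though it requires knowing the Kuranishi family is universal and that the $G$-action on the base is linearizable near the origin. All three arguments are valid; the paper's is the most economical in terms of what must be verified.
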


\begin{proof} 
By the Kuranishi deformation theory, it is sufficient to construct a 
vector valued $(0,1)$-form
$$\varphi(t)=\sum_{k_{i}\geq 0} \varphi_{k_{1}\cdots k_{m}}t_{1}^{k_{1}}\cdots t_{m}^{k_{m}}$$
such that
\begin{equation}\label{deformationcondition}
 \varphi(0)=0 \qquad\text{and}\qquad
\overline{\partial}\varphi(t)=\frac{1}{2}[\varphi(t), \varphi(t)] ,
\end{equation}
where $\varphi_{k_{1}\cdots k_{m}}$ are $G$-invariant vector
valued $(0,1)$-forms,
$\{\varphi_{k_{1}\cdots k_{m}}\}_{\sum k_{i}=1}$ gives a basis of $H^{1}(X, T_{X})^{G-\inv}$
and $t_{1},...,t_{m}$ are parameters of $\Delta$.
By \cite{MK}, solving \eqref{deformationcondition} is equivalent to find $G$-invariant vector valued $(0,1)$-forms
$\varphi_{\mu}$ such that
\begin{equation}\label{deformationconditionequation}
\overline{\partial}\varphi_{\mu}=\frac{1}{2}\sum_{|\lambda|< |\mu|} [\varphi_{\lambda},
\varphi_{\mu-\lambda}]
\end{equation}
for any $\mu$.

Suppose that we have already found $\varphi_{\mu}$ for $|\mu|\leq
N$ such that \eqref{deformationconditionequation} is satisfied for all $|\mu|\leq N$. 
If $|\mu |=N+1$, thanks to \cite{Tian}, there exists a vector valued
$(0,1)$-form $s_{\mu}$
satisfying
$$\overline{\partial}s_{\mu}=\frac{1}{2}\sum_{|\lambda|\leq N} [\varphi_{\lambda}, \varphi_{\mu-\lambda}].$$
Recall that if $Y_{1}$, $Y_{2}$ are two $G$-invariant vector valued $(0,1)$-forms, then
$[Y_{1}, Y_{2}]$ is also a $G$-invariant vector valued $(0,2)$-form
\footnote{Let $\alpha\in G$, $f\in C^{\infty}(X)$ and $x\in X$. Using the
$G$-invariance of $Y_{1}$ and $Y_{2}$, we have
$\alpha^{*}(Y_{1}Y_{2})(f)(x)=Y_{1}Y_{2} (f\circ \alpha)(\alpha^{-1}(x))
=Y_{1} (Y_{2}(f)\circ \alpha)(\alpha^{-1}(x))=Y_{1}(Y_{2}(f))(x)$. 
Thus $[Y_{1}, Y_{2}]$ is also $G$-invariant.}.
Therefore $\overline{\partial}s_{\mu}$ is a $G$-invariant vector
valued $(0,2)$-form.
The finiteness of $G$ and the above $G$-invariance of
$\overline{\partial}s_{\mu}$ 
imply hence that 
$\frac{1}{|G|}\sum\limits_{g\in G}g^{*}s_{\mu}$ is a $G$-invariant vector valued
$(0,1)$-form satisfying \eqref{deformationconditionequation}. 
The lemma is proved.
\end{proof}

The following proposition tells us that for a compact Kähler manifold with 
numerically trivial canonical bundle, 
if it admits ``more automorphisms'', then it is ``more algebraic''. More precisely, we have
\begin{prop}
Let $\pi: \mathcal{X} \rightarrow \Delta$ be the deformation constructed in Lemma 3.2. 
Then there exists a sequence $t_{i}\rightarrow 0\in \Delta$ such that
$X_{t_{i}}$ are projective varieties.
\end{prop}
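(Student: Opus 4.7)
The plan is to apply Version B of Proposition 3.3 from \cite{Voi1} to the $G$-equivariant deformation $\pi:\mathcal{X}\to\Delta$ supplied by Lemma 3.2, whose Kodaira--Spencer image is $V = H^{1}(X, T_{X})^{G-\inv}$. For this, I need to produce a nef class $\omega\in H^{1,1}(X)$ such that $\omega\wedge: V \to H^{2}(X, \mathcal{O}_{X})$ is surjective (up to the natural $G$-equivariant refinement discussed below).

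Since $G$ is finite, I begin by averaging an arbitrary Kähler class under $G$ to obtain a $G$-invariant Kähler class $\omega$. Because $K_{X} = \mathcal{O}_{X}$, the anticanonical bundle $-K_{X}$ is holomorphically trivial, hence hermitian semipositive under the flat metric, and the Hard Lefschetz theorem with $L = -K_{X}$ and $q = 2$ yields the surjection
$$\omega^{2}\wedge: H^{0}(X, \Omega_{X}^{n-2}) \twoheadrightarrow H^{2}(X, \Omega_{X}^{n}\otimes(-K_{X})) = H^{2}(X, \mathcal{O}_{X}),$$
which, exactly as in the proof of Proposition \ref{easyappli}, factors through $H^{1}(X, T_{X}) = H^{1}(X, \Omega_{X}^{n-1}\otimes(-K_{X}))$. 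Passing to $G$-invariants now uses that $\omega$ itself is $G$-invariant: every map in the factorization becomes $G$-equivariant, and since the Reynolds operator $\tfrac{1}{|G|}\sum_{g\in G} g^{*}$ makes the functor $(\cdot)^{G-\inv}$ exact on finite-dimensional representations of the finite group $G$, the restriction
$$\omega\wedge: V = H^{1}(X, T_{X})^{G-\inv} \twoheadrightarrow H^{2}(X, \mathcal{O}_{X})^{G-\inv}$$
remains surjective.

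The main obstacle, and the step that will require the most care, is reconciling this $G$-invariant surjectivity with the literal statement of Voisin's criterion, which demands surjectivity onto the full $H^{2}(X, \mathcal{O}_{X})$. My plan is to resolve this by observing that since the entire deformation $\mathcal{X}\to\Delta$ carries a fiberwise $G$-action, the parallel transport of the $G$-invariant class $\omega$ stays $G$-invariant on every nearby fiber $X_{t}$; consequently its Hodge projection to $H^{0,2}(X_{t})$ automatically lies in $H^{0,2}(X_{t})^{G-\inv}$. The density criterion of \cite[Proposition 5.20]{Voi2} underlying Version B therefore only needs to be verified on the $G$-invariant part $H^{0,2}(X)^{G-\inv}$, and the surjectivity established above suffices. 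This yields the desired sequence $t_{i}\to 0\in \Delta$ with $X_{t_{i}}$ projective.
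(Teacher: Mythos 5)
Your proof is correct and follows essentially the same route as the paper: a $G$-invariant Kähler class, Hard Lefschetz (via Proposition 3.1) to get surjectivity of $\omega\wedge:H^{1}(X,T_{X})\to H^{2}(X,\mathcal{O}_{X})$, a finite-group averaging argument to pass to $G$-invariants, and the observation that the $G$-equivariance of the family lets one apply Voisin's density criterion inside the $G$-invariant sub-Hodge structure. The paper phrases that last step by first noting that $H^{2}(X,\mathbb{Q})^{G-\inv}$ is a sub-Hodge structure and the family induces a VHS on it, while you phrase it via parallel transport of $\omega$ remaining $G$-invariant; these are the same idea.
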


\begin{proof}
We first prove that $H^{2}(X, \mathbb{Q})^{G-\inv}$ admits a sub-Hodge
structure of $H^{2}(X,\mathbb{Q})$.
In fact, we have the equality 
\begin{equation}\label{hodgeequation}
H^{2}(X, \mathbb{Q})^{G-\inv}\otimes \mathbb{R}=H^{2}(X, \mathbb{R})^{G-\inv}
\end{equation}
by observing that the elements in $G$ act continuous on $H^{2}(X, \mathbb{R})$. 
Combining \eqref{hodgeequation} with the obvious Hodge decomposition
$$H^{2}(X, \mathbb{C})^{G-\inv}=\oplus_{p+q=2}H^{p,q}(X, \mathbb{C})^{G-\inv}, $$
$H^{2}(X, \mathbb{Q})^{G-\inv}$ is thus a sub-Hodge structure of $H^{2}(X,\mathbb{Q})$.
Then $\pi$ induces a VHS of $H^{2}(X, \mathbb{Q})^{G-\inv}$.

Let $\omega_{X}$ be a $G$-invariant Kähler metric on $X$. 
\eqref{surjecitvedensity} of Proposition 3.1 implies that
$$\omega_{X}\wedge H^{1}(X, T_{X})\rightarrow H^{2}(X, \mathcal{O}_X)$$
is surjective. 
Thanks to the $G$-invariance of $\omega_{X}$, 
$$\omega_{X}\wedge H^{1}(X, T_{X})^{G-\inv}\rightarrow H^{2}(X, \mathcal{O}_X )^{G-\inv}$$
is also surjective.
Using the density criterion \cite[Proposition 5.20]{Voi2} 
and the same argument of \cite[Proposition 3.3]{Voi1},
the proposition is proved.
\end{proof}

We now prove the main result in this section.
\begin{thm}
Let $X$ be a compact Kähler manifold with $-K_{X}$ hermitian semipositive.
Then it can be approximated by projective varieties.
\end{thm}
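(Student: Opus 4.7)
The strategy is to pass to the finite étale cover $\widehat{X}\to X$ given by the Structure Theorem and verify the hypotheses of Version B of Proposition 3.3 in [Voi05] for a suitable $G$-equivariant deformation of $\widehat{X}$ descended to $X$, where $G=\operatorname{Gal}(\widehat{X}/X)$. Let $\pi\colon \widehat{X}\to F$ be the smooth fibration from the Structure Theorem; after enlarging $\widehat{X}$ by pulling back a Beauville cover that trivializes $K_F$, we may assume $K_F=\mathcal{O}_F$. The Fano fibers $Y_2$ satisfy $H^q(Y_2,\mathcal{O}_{Y_2})=0$ for $q\ge 1$ by Remark \ref{vanishingoftensor}, so $R^q\pi_*\mathcal{O}_{\widehat{X}}=0$ for $q\ge 1$; the Leray spectral sequence then yields $G$-equivariant isomorphisms $H^p(\widehat{X},\mathcal{O}_{\widehat{X}})\cong H^p(F,\mathcal{O}_F)$ and hence $H^p(X,\mathcal{O}_X)\cong H^p(F,\mathcal{O}_F)^{G\text{-}\inv}$.

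Next, apply Lemma 3.2 to the Calabi--Yau manifold $F$ with its induced $G$-action to obtain a $G$-equivariant deformation $\mathcal{F}\to\Delta$ whose Kodaira--Spencer image equals $H^1(F,T_F)^{G\text{-}\inv}$. Using the product splitting $\widetilde{X}=\mathbb{C}^q\times Y_1\times Y_2$ of the universal cover together with the rigidity of the Fano factor $Y_2$, lift this family to a $G$-equivariant deformation $\mathcal{X}\to\Delta$ of $\widehat{X}$ sitting over $\mathcal{F}$, and descend by $G$ to obtain a deformation $\mathcal{X}/G\to\Delta$ of $X$; its Kodaira--Spencer image $V\subset H^1(X,T_X)$ maps surjectively via $\pi_*$ onto $H^1(F,T_F)^{G\text{-}\inv}$. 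Choose a $G$-invariant Kähler class $\omega_F$ on $F$ (by averaging an arbitrary Kähler class) and let $\omega$ be the descent of $\pi^*\omega_F$ to $X$, which is nef. Exactly as in the proof of Proposition \ref{easyappli}, Hard Lefschetz applied to the hermitian semipositive bundle $-K_F=\mathcal{O}_F$ gives a surjection $\omega_F\wedge\colon H^1(F,T_F)\twoheadrightarrow H^2(F,\mathcal{O}_F)$, which restricts to $G$-invariants. Combining this with the Leray identification and the surjectivity $V\twoheadrightarrow H^1(F,T_F)^{G\text{-}\inv}$, we obtain the required surjection $\omega\wedge\colon V\twoheadrightarrow H^2(X,\mathcal{O}_X)$. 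Version B of Proposition 3.3 in [Voi05] then produces a sequence $t_i\to 0$ with $X_{t_i}$ projective.

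The most delicate point is the $G$-equivariant lift of the base deformation to a deformation of $\widehat{X}$ preserving the fibration, and the verification that the resulting descended Kodaira--Spencer map has the claimed image. The product splitting of the universal cover and the cohomological triviality $H^q(Y_2,\mathcal{O}_{Y_2})=0$ of the Fano fibers should render the lift unobstructed (the obstruction in $H^2(\widehat{X}, T_{\widehat{X}/F})$ is controlled by cohomology of the Fano fibers), but implementing this carefully at the level of Kuranishi theory is where the explicit construction of the deformation promised in the Introduction must be carried out in detail.
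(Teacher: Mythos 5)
Your proposal follows the paper's strategy closely: pass to the étale cover $\widehat{X}\to X$, use the Fano fibration $\pi\colon\widehat{X}\to F$ and Leray to identify $H^q(\widehat{X},\mathcal{O}_{\widehat{X}})$ with $H^q(F,\mathcal{O}_F)$, build a $G$-equivariant deformation of the (torsion-canonical) base $F$ via Lemma 3.2 using Hard Lefschetz for the surjectivity, lift it to $\widehat{X}$ through the splitting $\widetilde{X}=\mathbb{C}^q\times Y_1\times Y_2$, and descend to $X$. Two points of comparison. First, the Kuranishi-theoretic worry you raise at the end is unnecessary: since $\Gamma_1$ acts separately on $\mathbb{C}^q\times Y_1$ and on $Y_2$, the lift is entirely explicit --- one deforms the $(\mathbb{C}^q\times Y_1)$-factor (equivalently $F$), keeps the complex structure on $Y_2$ frozen, and takes the quotient by the correspondingly deformed $\Gamma_1$-action, so there is no obstruction space to control. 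Second, the paper's own proof of the theorem does not invoke Version B on $X$ directly: it applies Proposition 3.3 to $F$ to produce projective $F_{t_i}$, and then concludes $X_{t_i}$ is projective by Remark \ref{fibrationprojecitivity} (a rational, fiberwise-positive $(1,1)$-class on $\widehat{X}$) together with Kodaira's criterion; the route you take --- verifying the surjection $\omega\wedge V\twoheadrightarrow H^2(X,\mathcal{O}_X)$ on $X$ itself and citing Version B --- is precisely what the paper records afterward in Remark \ref{remarkvari} for the stronger form needed in Proposition 3.5. Both routes are valid; yours proves more and is what the cone-conjecture application requires, while the paper's Step 3 is a shorter path to the bare approximation statement. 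Your observation that one should first pass to a further finite cover to upgrade $c_1(F)_{\mathbb{R}}=0$ to $K_F=\mathcal{O}_F$ before applying Lemma 3.2 is a fair remark on a detail the paper leaves implicit (it cites the torsion version of Tian--Todorov but Lemma 3.2 as stated assumes $K$ literally trivial).
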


\begin{proof}
We prove it in three steps.

\textbf{Step 1}: We use the terminology of the Structure Theorem in this section.
Let $G=\pi_{1}(X) / \Gamma_{1}$ and $\widehat{X}=\widetilde{X}/ \Gamma_{1}$. 
Then $G$ acts on $\widehat{X}$ and $X=\widehat{X}/ G$. 
Thanks to $(ii)$ of the Structure Theorem in this section,  
we have a smooth fibration
\begin{equation}\label{corofstructthm}
\pi: \widehat{X}\rightarrow F 
\end{equation}
with the fibers $Y_{2}$.
We prove in this step that
\begin{equation}\label{firstproveequa}
H^{q}(\widehat{X},\mathcal{O}_{\widehat{X}})= \pi^* ( H^{q}(F,\mathcal{O}_F) ) 
\end{equation}
and 
\begin{equation}\label{secondproveequa}
H^{q}(\widehat{X},\mathcal{O}_{\widehat{X}})^{G-\inv}=\pi^{*}(H^{q}(F,\mathcal{O}_F)^{G-\inv} ).
\end{equation}

Using the smooth fibration \eqref{corofstructthm},
we can calculate $H^{q}(\widehat{X},\mathcal{O}_{\widehat{X}})$ by the Leray
spectral sequence.
Then \eqref{firstproveequa} comes directly from the fact that
$$H^{q}(Y_{2}, \mathcal{O}_{Y_2})=0 \qquad\text{for } q\geq 1 $$ 
(cf. Remark \ref{vanishingoftensor} of the Structure Theorem in this section).
As for \eqref{secondproveequa}, 
we just need to check that the image of the injective map
\begin{equation}\label{checkinjective}
\pi^{*}: H^{q}(F,\mathcal{O}_F)^{G-\inv}\rightarrow
H^{q}(\widehat{X},\mathcal{O}_{\widehat{X}})
\end{equation}
is $H^{q}(\widehat{X},\mathcal{O}_{\widehat{X}})^{G-\inv}.$
Let $\gamma \in G$ and $\alpha$ a smooth differential form on $F$.
Since $\pi_{1}(X)$ acts on $\mathbb{C}^{q}\times Y_{1}$ and $Y_{2}$ separately,
we have the diagram
$$\begin{CD} \widehat{X}@>\gamma>> \widehat{X}\\
 @VV\pi V @VV\pi V\\ 
F@>\gamma>> F\end{CD}$$
Then the equality
$$\gamma^{*}( \pi^{*}\alpha )=\pi^{*}(\gamma^{*} \alpha)$$
implies that the image of \eqref{checkinjective} is contained in $H^{q}(\widehat{X},\mathcal{O}_{\widehat{X}})^{G-\inv}$.
To prove that $H^{q}(\widehat{X},\mathcal{O}_{\widehat{X}})^{G-\inv}$ is in the image of \eqref{checkinjective}, 
we take an element $\beta\in H^{q}(\widehat{X},\mathcal{O}_{\widehat{X}})^{G-\inv}$.
Thanks to the proved equality \eqref{firstproveequa}, 
we can find an element $\mu\in H^{q}(F,\mathcal{O}_F)$ 
such that $\pi^{*}\mu=\beta$ as an element in $H^{q}(\widehat{X},\mathcal{O}_{\widehat{X}})$.
Since 
$$\pi^{*}(\gamma^{*}\mu)=\gamma^{*}(\pi^{*}\mu)=\gamma^{*}(\beta)=\beta=\pi^{*}(\mu)$$
in $H^{q}(\widehat{X},\mathcal{O}_{\widehat{X}})$, 
the injectivity of \eqref{checkinjective}
implies that $\gamma^{*}(\mu)=\mu$ in $H^{q}(F, \mathcal{O}_F)$.
Then $\mu$ is $G$-invariant. Therefore \eqref{checkinjective} gives an isomorphism from
$H^{q}(F,\mathcal{O}_F)^{G-\inv}$
to
$H^{q}(\widehat{X},\mathcal{O}_{\widehat{X}})^{G-\inv}.$
\eqref{secondproveequa} is proved.

\textbf{Step 2}: Let $\omega_{F}^{G-\inv}$ be a $G$-invariant Kähler metric on $F$.
We construct in this step a deformation of $F$: $\mathcal{F}\rightarrow \Delta$ such that
$$\omega_{F}^{G-\inv}\wedge V\rightarrow H^{2}(F,\mathcal{O}_F)^{G-\inv}$$
is surjective, where $V$ is the image of the Kodaira-Spencer map of this deformation.
Moreover, $\mathcal{F}$ should admit a holomorphic $G$-action fiberwise.

Since $c_{1}(F)_{\mathbb{R}}=0$ by construction, Proposition 3.1 implies that
$$\omega_{F}^{G-\inv}\wedge H^{1}(F, T_{F})\rightarrow H^{2}(F, \mathcal{O}_F)$$
is surjective. Then
\begin{equation}\label{surjectionequation}
\omega_{F}^{G-\inv}\wedge H^{1}(F, T_{F})^{G-\inv}\rightarrow H^{2}(F,
\mathcal{O}_F)^{G-\inv} 
\end{equation}
is also surjective.
Thanks to Lemma 3.2, there exists a deformation of $F$ satisfying the requirements of deformation in this step, 
especially it admits a holomorphic $G$-action fiberwise.

\textbf{Step 3}: Final conclusion.

Since $\widehat{X}$ is the quotient of $\Gamma_{1}\curvearrowright \mathbb{C}^{q}\times Y_{1}\times Y_{2}$
and $\Gamma_{1}$ acts on $\mathbb{C}^{q}\times Y_{1}$ and $Y_{2}$ separately,
the deformation of $F= (\mathbb{C}^{q}\times Y_{1}) / \Gamma_{1}$ in Step 2 induces a deformation of $\widehat{X}$:
$$\widehat{\mathcal{X}}\rightarrow \Delta$$
by preserving the complex structure of $Y_{2}$.
By construction, we have a natural fibration
$$\widetilde{\pi}: \widehat{\mathcal{X}}\rightarrow \mathcal{F} .$$
Moreover, since $G$ is holomorphic for the fibers of $\mathcal{F}$ over $\Delta$,
the quotient $\mathcal{X}=\widehat{\mathcal{X}}/ G$ is a smooth deformation of $X$.
In summary, we have the following diagrams:
$$\begin{CD} \widehat{X}@>G>> X=\widehat{X} / G\\ @VV\pi V\\ F\end{CD}  \qquad\text{and} \qquad 
\begin{CD} \widehat{\mathcal{X}}@>G>> \mathcal{X}=\widehat{\mathcal{X}}/G\\ @VV\widetilde{\pi} V\\ \mathcal{F}\end{CD}.
$$
Let $X_t, F_t$ be the fibers of $\mathcal{X}$, $\mathcal{F}$ over $t\in \Delta$.
Thanks to Proposition 3.3, there exists a sequence $t_{i}\rightarrow 0\in \Delta$
such that $F_{t_{i}}$ are projective. 
Combining this with Remark \ref{fibrationprojecitivity} after 
the Structure Theorem in this section,
we obtain that $X_{t_{i}}$ are projective. 
The proposition is proved.
\end{proof}

\begin{remark}\label{remarkvari}
For the further application, we need to study the deformation $\mathcal{X}$ in detail.  
Let $\pr: \widehat{X}\rightarrow X$ be the quotient.
Since $\pi^{*}\omega_{F}^{G-\inv}$ is a $G$-invariant semipositive form on
$\widehat{X}$, we can find a nef class $\alpha$ on $X$ such that $\pr^* (\alpha)= \pi^{*}\omega_{F}^{G-\inv}$.
We denote it also $\pi^{*}\omega_{F}^{G-\inv}$ for simplicity.
Let $V$ be the image of Kodaira-Spencer map of the deformation $\mathcal{X}\rightarrow \Delta$.
We now prove that
\begin{equation}\label{provingsurj}
\alpha\wedge V\rightarrow H^{2}(X,\mathcal{O}_X)
\end{equation}
is surjective.
Thanks to the construction of $\widehat{\mathcal{X}}$ and the surjectivity of \eqref{surjectionequation},
the morphism 
\begin{equation}\label{surX1}
 \pi^{*}\omega_{F}^{G-\inv}\wedge W\rightarrow \pi^{*} (H^{2}(F, \mathcal{O}_F)^{G-\inv})
\end{equation}
is surjective on $\widehat{X}$, 
where $W$ is the image of Kodaira-Spencer map of the deformation $\widehat{\mathcal{X}}\rightarrow \Delta$.
Since
$$\pi^{*} (H^{2}(F, \mathcal{O}_F)^{G-\inv})=H^{q}(\widehat{X},\mathcal{O}_{\widehat{X}})^{G-\inv}$$
which is proved in Step 1,
\eqref{surX1} implies that
$$\pi^{*}\omega_{F}^{G-\inv}\wedge W\rightarrow H^{q}(\widehat{X},\mathcal{O}_{\widehat{X}})^{G-\inv}$$
is surjective.
Hence \eqref{provingsurj} is surjective.
\end{remark}

As an application, we prove \cite[Conjecture 2.3 and 10.1]{BDPP04}
for compact Kähler manifolds with hermitian semipositive anticanonical bundles.

\begin{prop}
If $X$ is a compact Kähler manifold with $-K_{X}$ hermitian semipositive, 
then the Conjecutre 2.3 and 10.1 in \cite{BDPP04} are all true.
\end{prop}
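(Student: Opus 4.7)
The plan is to reduce to the projective case by means of the approximation theorem just proved (Theorem 3.4), and then transfer the truth of the two BDPP conjectures across the family. Recall that BDPP already establishes Conjectures 2.3 and 10.1 in the projective setting: the pseudo-effective cone of $(1,1)$-classes is dual to the closure of the cone of movable curves, and the canonical bundle fails to be pseudo-effective exactly when the manifold is uniruled. So the task is to show both statements descend from the projective members $X_{t_i}$ of the deformation $\mathcal{X}\to\Delta$ produced in Theorem 3.4 to the central fibre $X_0=X$.

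First I would exploit the very concrete deformation of Theorem 3.4 together with Remark 3.5: the underlying $C^\infty$-manifold is fixed along the family (only the complex structure on the Calabi--Yau/torus factor $F$ is deformed), so $H^{1,1}_{\mathrm{BC}}(X_t,\mathbb{R})$ varies continuously in $t$ via Gauss--Manin and the nef class $\alpha=\pi^*\omega_F^{G\textrm{-}\inv}$ extends to the whole family. This gives a clean framework to compare cones between nearby fibres.

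Next, for Conjecture 2.3, I would use lower semicontinuity of pseudo-effectivity (weak limits of closed positive currents are closed positive) to see that the pseudo-effective cone behaves upper-semicontinuously across $t=0$, while the dual description of the movable cone in terms of movable intersection products of currents (as in [BDPP04]) also behaves continuously in $t$. Since BDPP duality holds on each projective $X_{t_i}$, a standard limit/convex-duality argument in the continuously varying vector space $H^{1,1}_{\mathrm{BC}}(X_t,\mathbb{R})$ yields the duality on $X$. For Conjecture 10.1, the hermitian semipositivity of $-K_X$ makes the problem almost trivial in one direction (if $K_X\equiv 0$ then $K_X$ is pseudo-effective and $X$ is not uniruled by the Structure Theorem's decomposition of the universal cover), and in the other direction if $K_X$ is not pseudo-effective then neither is $K_{X_{t_i}}$ for $i\gg 0$ (hermitian semipositivity of $-K_X$ deforms, so $-K_{X_{t_i}}$ stays nef and nontrivial), so the projective BDPP result gives uniruledness of $X_{t_i}$, and one concludes uniruledness of $X$ by taking a Chow/Hilbert-scheme limit of rational curves from nearby fibres.

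The main obstacle will be the semicontinuity step for the movable cone: in the Kähler setting, movable classes are defined through modifications and movable intersection products, not via genuine curves, so one cannot simply deform algebraic movable curves from $X_{t_i}$ to $X$. The cleanest route is probably to reformulate Conjecture 2.3 entirely in terms of the movable intersection operator of [BDPP04], whose continuity under smooth Kähler deformation follows from the continuity of Monge--Amp\`ere masses and Siu decompositions; this avoids the need to track actual curves, and it is at this point that the uniform control on the family provided by the special shape of the deformation in Theorem 3.4 (only $F$ varies, while the projective fibre $Y_2$ is rigid) should be essential.
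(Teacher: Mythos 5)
The paper's proof is much shorter than your plan and rests on a single, crucial mechanism that your plan does not isolate. It invokes Remark~3.5: the surjectivity of $\alpha\wedge V\to H^{2}(X,\mathcal{O}_X)$ for a nef class $\alpha$ and the Kodaira--Spencer image $V$ of the deformation produced in Theorem~3.4. The point of that surjectivity is not just to compare cones across fibres, but to produce, by the density criterion of Voisin (\cite[Prop.~5.20]{Voi2}), for \emph{any} smooth closed $(1,1)$-form $\beta$ and any small $s\neq 0$, a sequence of rational $(1,1)$-classes $\beta_t\in H^{1,1}(X_t,\mathbb{Q})$ on the nearby projective fibres converging to $\beta+s\alpha$ in $C^{\infty}$. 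Once one has this universal approximation of perturbed $(1,1)$-classes by rational classes on projective fibres, the rest is literally the argument of \cite[Thm.~10.12]{BDPP04}, which is designed precisely for that setting; the paper just cites it. So the paper's route is: establish the hypothesis of BDPP Theorem~10.12 via the surjectivity statement, then invoke that theorem.

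Your route is genuinely different: you want to prove a cone-continuity statement directly, arguing that the pseudo-effective cone behaves semicontinuously and that the movable intersection operator varies continuously. This is where the gap lies. The semicontinuity you cite (closedness of pseudo-effectivity under limits of currents, hence openness of non-pseudo-effectivity in the family) is correct but not sufficient: after producing a movable class $\gamma_{t_i}$ on $X_{t_i}$ with $\alpha_{t_i}\cdot\gamma_{t_i}<0$, you must pass $\gamma_{t_i}$ to the central fibre and keep both movability \emph{and} strict negativity; a naive weak limit only gives $\leq 0$, and movability is not preserved by a ``standard convex-duality argument'' without the quantitative density input. That input is precisely what Remark~3.5 supplies and what your sketch omits. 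Your proposal to reformulate via continuity of Monge--Amp\`ere masses is plausible in spirit but is essentially re-deriving the content of \cite[Thm.~10.12]{BDPP04} from scratch, without the key lemma (rational approximation of $\beta+s\alpha$) that actually makes the limiting argument close. Similarly, for Conjecture~10.1 your ``Chow/Hilbert-scheme limit of rational curves'' argument is a reasonable but unproven side road; the cited BDPP theorem already handles that conjecture under the same density hypothesis, and the paper simply invokes it rather than re-deriving uniruledness.

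In short: you correctly identify Theorem~3.4 and Remark~3.5 as the starting point and correctly foresee the difficulty of transporting movable classes, but you do not use the surjectivity of $\alpha\wedge V$ for its intended purpose (approximation of arbitrary $(1,1)$-classes by rational ones on projective fibres), which is exactly the hypothesis under which \cite[Thm.~10.12]{BDPP04} already yields both conjectures, and your substitute argument via cone semicontinuity is not self-contained.
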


\begin{proof}
By Remark \ref{remarkvari} after Theorem 3.4, 
there exists a local deformation of $X$
$$\pi: \mathcal{X}\rightarrow \Delta,$$
such that 
\begin{equation}\label{surjectivehodge}
 \alpha\wedge V\rightarrow H^{2}(X,\mathcal{O}_X)
\end{equation}
is surjective for some nef class $\alpha\in H^{1,1}(X, \mathbb{R})$,
where $V$ is the image of the Kodaira-Spencer map of $\pi$.

Let $\beta$ be any smooth closed $(1,1)$-form on $X$. 
Thanks to the surjectivity of \eqref{surjectivehodge}, 
$$(\beta+s \alpha) \wedge V\rightarrow H^{2}(X,\mathcal{O}_X)$$
is also surjective for any $s\neq 0$ small enough.
By the proof of \cite[Proposition 5.20]{Voi2}, we can hence find a sequence of
smooth closed $2$-forms $\{\beta_{t}\}$ on $X$, such that
$$\lim_{t\rightarrow 0} \beta_{t}=\beta+s\alpha$$ 
in $C^{\infty}$-topology and $\beta_{t}\in H^{1,1}(X_{t}, \mathbb{Q})$.
By the same argument as in \cite[Theorem 10.12]{BDPP04}, 
the proposition is proved. 
\end{proof}

We now study the case when $X$ has a real analytic metric and nonpositive
bisectional curvatures.
Recall first the structure theorem \cite[Theorem E]{WZ}

\begin{prop}
Let $X$ be a compact Kähler manifold of dimension $n$ 
with real analytic metric and nonpositive bisectional curvature, 
and let $\widetilde{X}$ be its universal cover. Then

$(i)$ There exists a holomorphically isometric decomposition 
$\widetilde{X}=\mathbb{C}^{n-r}\times Y^{r}$,
where $Y^{r}$ is a complete manifold with nonpositve bisectional curvature 
and the Ricci tensor of $Y^{r}$ is strictly negative somewhere.

$(ii)$ (cf. \cite[Claim 2, Theorem E]{WZ})
There exists a finite index sub-normal group $\Gamma'$ of
$\Gamma=\pi_{1}(X)$
such that $Y^{r}/ \Gamma'$ is a compact manifold and $\widetilde{X}/ \Gamma'$
possess the smooth fibrations to
$Y^{r}/ \Gamma'$ and $\mathbb{C}^{n-r}/\Gamma' $.
\end{prop}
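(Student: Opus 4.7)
The statement is declared as a recall of \cite[Theorem E]{WZ}, so my plan is primarily to organize the argument in a self-contained way while ultimately appealing to the analysis in \cite{WZ}. Since $X$ has a real analytic Kähler metric, the universal cover $\widetilde{X}$ inherits a complete real analytic Kähler metric of nonpositive bisectional curvature, so the first step is to apply the de Rham decomposition theorem to $\widetilde{X}$. This gives an isometric and holomorphic splitting $\widetilde{X}=\widetilde{X}_{0}\times \widetilde{X}_{1}\times\cdots\times \widetilde{X}_{k}$ where $\widetilde{X}_{0}$ is the flat Euclidean factor and each $\widetilde{X}_{i}$ ($i\geq 1$) is simply connected, irreducible and non-flat.

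The next step is to collect the flat factor together with the factors on which the Ricci tensor vanishes identically, and show they assemble into a single Euclidean factor $\mathbb{C}^{n-r}$. The main input here is the observation that, under a nonpositive bisectional curvature assumption, if $\Ricci\equiv 0$ then in fact the bisectional curvature is identically $0$: this follows from writing $\Ricci(v,\bar{v})=\sum_{i}R(v,\bar{v},e_{i},\bar{e}_{i})$ with each summand $\leq 0$, forcing each to vanish, and then polarizing. Hence any irreducible factor $\widetilde{X}_{i}$ on which Ricci vanishes is flat, contradicting irreducibility unless it is one-dimensional Euclidean; collapsing all such one-dimensional Euclidean pieces into $\widetilde{X}_{0}$ yields the decomposition $\widetilde{X}=\mathbb{C}^{n-r}\times Y^{r}$ with $Y^{r}$ having Ricci strictly negative somewhere (by real analyticity, once $\Ricci\not\equiv 0$, strict negativity at some point is automatic from the sign condition). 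This settles $(i)$.

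For $(ii)$ the strategy is to exploit the uniqueness of the de Rham decomposition up to permutation of isometric factors: since the Euclidean factor is canonically distinguished, every $\gamma\in\Gamma=\pi_{1}(X)$ preserves the splitting $\widetilde{X}=\mathbb{C}^{n-r}\times Y^{r}$ up to a factor permutation, and passing to the kernel of this permutation action one obtains a finite-index normal subgroup $\Gamma'\subset \Gamma$ each of whose elements acts as a pair $(\gamma_{1},\gamma_{2})\in \Iso(\mathbb{C}^{n-r})\times\Iso(Y^{r})$. Projection onto the two factors defines holomorphic submersions $\widetilde{X}/\Gamma'\to \mathbb{C}^{n-r}/\Gamma'$ and $\widetilde{X}/\Gamma'\to Y^{r}/\Gamma'$. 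The main point left is then to verify that both quotients are compact; this is exactly where one invokes \cite[Claim 2, Theorem E]{WZ}, whose argument uses that the projection of $\Gamma'$ onto $\Iso(\mathbb{C}^{n-r})$ contains a lattice (because $\widetilde{X}/\Gamma$ is compact and the Euclidean factor has no cocompact proper stabilizer) and that this forces the projection onto $\Iso(Y^{r})$ to act cocompactly as well.

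The step I expect to be the main obstacle is the compactness statement in $(ii)$: the splitting of individual elements of $\Gamma'$ into product form is formal, but showing that the two projections both yield cocompact discrete subgroups of the respective isometry groups requires a nontrivial argument about the structure of isometry groups of product Hadamard manifolds with one factor of strictly negative Ricci curvature; I would follow the argument of Wu--Zheng verbatim rather than reprove it here.
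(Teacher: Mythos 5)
The paper gives no proof at all here: Proposition~3.6 is stated explicitly as a \emph{recall} of \cite[Theorem E]{WZ}, and part $(ii)$ even carries the in-text citation ``cf.\ [Claim 2, Theorem E]{WZ}.'' So there is no ``paper's own proof'' to compare against, and a correct reconstruction of your kind would have to cite Wu--Zheng at the genuinely hard steps rather than reprove them. Measured against that bar, your sketch of $(ii)$ is fine — you correctly defer the cocompactness of the projected actions to \cite{WZ} — but your sketch of $(i)$ has a real gap.

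The gap is in the step where you pass from ``$\Ricci_{Y^r}\not\equiv 0$'' to ``$\Ricci_{Y^r}$ is strictly negative (i.e.\ negative definite) at some point.'' You write that this is ``automatic from the sign condition'' given real analyticity, but it is not. From $\Ricci\le 0$ and $\Ricci\not\equiv 0$ on a real-analytic irreducible de~Rham factor, you only get that Ricci has a strictly negative eigenvalue on a dense open set; nothing so far excludes the possibility that the Ricci form has a nontrivial kernel at \emph{every} point of that factor. Ruling this out — roughly, showing that the null distribution of the Ricci tensor is parallel on the set where it has locally constant rank, so that by de~Rham it would force a metric splitting contradicting irreducibility, and then using analyticity to propagate — is precisely the analytic heart of Wu--Zheng's theorem and is where the real-analyticity hypothesis earns its keep. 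This step needs to be cited, not dismissed. (Note also that ``strictly negative somewhere'' must mean negative definite at a point: the subsequent Remark~3.7 uses it to conclude $Y^r/\Gamma'$ is of general type, which requires $\int(-\Ricci)^r>0$, not merely that some eigenvalue of Ricci is negative somewhere.) A smaller slip: in the de~Rham decomposition the non-Euclidean factors $\widetilde{X}_i$, $i\ge 1$, are by definition irreducible and non-flat, so there are no ``one-dimensional Euclidean pieces'' to collapse into $\widetilde{X}_0$; the correct conclusion of your curvature computation is simply that no $\widetilde{X}_i$ with $i\ge 1$ can have $\Ricci\equiv 0$.
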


\begin{remark}\label{remarkWZ}
By \cite[Claim 2, Theorem E]{WZ}, $\mathbb{C}^{n-r}/\Gamma' $ is a torus.
We should notice that in contrast to the case when $-K_{X}$ is semipositive, 
$Y^{r}$ is not necessary compact in this proposition. 
The universal covers of curves of genus $g\geq 2$ are typical exemples.
The good news here is that $Y^{r}/ \Gamma'$ is a projective variety of general type
thanks to $(i)$.
\end{remark}

\begin{prop}
Let $X$ be a compact Kähler manifold of dimension $n$ 
with real analytic metric and nonpositive bisectional curvature.
Then it can be approximated by projective varieties. 
\end{prop}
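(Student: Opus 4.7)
The plan is to adapt the strategy of Theorem 3.4, using Proposition 3.6 in place of the Structure Theorem of \cite{DPS96}. First, I would pass to the étale Galois cover $\widehat{X}=\widetilde{X}/\Gamma'$ with finite group $G=\pi_{1}(X)/\Gamma'$, so that $X=\widehat{X}/G$ and $\widehat{X}$ admits commuting smooth fibrations $\widehat{\pi}_T : \widehat{X}\rightarrow T$ onto the torus $T=\mathbb{C}^{n-r}/\Gamma'$ and $\widehat{\pi}_Z : \widehat{X}\rightarrow Z$ onto the projective variety of general type $Z=Y^{r}/\Gamma'$ (cf. Remark \ref{remarkWZ}). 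Since the isometric action of $\Gamma'$ on the product $\widetilde{X}=\mathbb{C}^{n-r}\times Y^{r}$ decomposes (up to a further finite index subgroup) as a product action by de Rham's theorem, $\widehat{X}$ is a ``twisted product'' of $T$ and $Z$, and $Z$ being already projective, the entire obstruction to projectivity of $X$ lies in the torus direction.

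Next, I would construct a $G$-equivariant deformation $\mathcal{X}\rightarrow \Delta$ of $X$ by varying the complex structure of $T$ while keeping $Z$ fixed. The torus $T$ is unobstructed and has a smooth Kuranishi family, whose $G$-invariant subfamily exists after averaging (mimicking Lemma 3.2). This lifts to $\widehat{X}$ because varying the lattice in $\mathbb{C}^{n-r}$ preserves the product decomposition and the separate $\Gamma'$-action, and then descends via the $G$-quotient to a deformation of $X$.

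Then I would apply Version B of Proposition 3.3 in \cite{Voi1} with the nef class $\alpha$ on $X$ obtained by descending (after averaging over $G$) the sum $\widehat{\pi}_T^{*}\omega_T+\widehat{\pi}_Z^{*}\omega_Z$, where $\omega_T$ is a flat Kähler class on $T$ and $\omega_Z$ is a rational polarization on $Z$. The required surjectivity $\alpha\wedge V\rightarrow H^{2}(X,\mathcal{O}_{X})$ reduces, via $G$-equivariance and the Künneth-type decomposition of $H^{2}(\widehat{X},\mathcal{O}_{\widehat{X}})$, to hitting each summand: the $H^{2}(T,\mathcal{O}_T)$ piece is reached by $\omega_T\wedge$ applied to torus Kodaira-Spencer classes via the Hard Lefschetz argument of Proposition 3.1 (since $c_{1}(T)_{\mathbb{R}}=0$); the summand $H^{1}(T,\mathcal{O}_T)\otimes H^{1}(Z,\mathcal{O}_Z)$ is reached by twisting-type deformations arising from the $\Gamma'$-cocycle; and the $H^{2}(Z,\mathcal{O}_Z)$ part is handled using that $Z$ is projective, via algebraic deformations of $Z$ paired with $\omega_Z$.

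The main obstacle is precisely the last piece, the handling of the $H^{2}(Z,\mathcal{O}_Z)$ summand: in Theorem 3.4 this vanished because $H^{q}(Y_{2},\mathcal{O}_{Y_{2}})=0$ for $q\geq 1$, whereas here $Z$ has in general nonvanishing higher cohomology and may even have obstructed deformations since it is of general type. Overcoming this requires either restricting to the locus of $Z$'s algebraic moduli on which projectivity is preserved, or applying a separate density criterion in the spirit of \cite[Proposition 5.20]{Voi2} on $Z$ alone and then combining it with the torus approximation to conclude that the resulting fibers $X_{t_i}$ in $\mathcal{X}\rightarrow\Delta$ are projective for a sequence $t_i\rightarrow 0$.
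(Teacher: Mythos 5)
Your proposal diverges from the paper at a crucial point, and the gap you flag at the end is precisely where your route stalls: the paper's proof never touches $H^{2}(Z,\mathcal{O}_{Z})$ because it never applies (any version of) Proposition~3.3 to $X$ or to $\widehat{X}$. Instead, the paper applies Proposition~3.3 \emph{to the torus $T=\mathbb{C}^{n-r}/\Gamma'$ alone}, with its finite $G$-action, which is legal because $K_T$ is trivial and Lemma~3.2 produces a $G$-equivariant Kuranishi family of $T$. This gives a sequence $t_i\to 0$ with $T_{t_i}$ projective. Since $Z=Y^{r}/\Gamma'$ is kept rigid and is already projective of general type (Remark~3.7), and the deformed $\widehat{X}_{t_i}$ still admits the two smooth fibrations $\widehat{X}_{t_i}\to T_{t_i}$ and $\widehat{X}_{t_i}\to Z$, one concludes projectivity of $\widehat{X}_{t_i}$ (and hence of $X_{t_i}=\widehat{X}_{t_i}/G$) directly from the geometry: the product map $\widehat{X}_{t_i}\to T_{t_i}\times Z$ is a finite morphism onto a projective variety, so the pullback of an ample class is an integral Kähler class. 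No surjectivity of $\alpha\wedge V$ onto $H^{2}(X,\mathcal{O}_X)$ is ever required.

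By contrast, you attempt to invoke Version~B of Proposition~3.3 on $X$ itself, which forces you to decompose $H^{2}(\widehat{X},\mathcal{O}_{\widehat{X}})$ and hit all three Künneth pieces. You correctly observe that the piece $H^{2}(Z,\mathcal{O}_Z)$ cannot be reached by the available deformations (the family keeps $Z$ fixed, and $Z$, being of general type, has obstructed deformations), and your two suggested patches do not close the gap: ``restricting to the locus of $Z$'s algebraic moduli'' is empty because $Z$ is not varied at all, and ``applying a separate density criterion on $Z$'' does not help since there is no family of $Z$'s to apply density to. The fix is conceptual rather than technical: drop the attempt to verify the Hodge-theoretic surjectivity on $X$, deform only the torus factor $G$-equivariantly (Lemma~3.2 applied to $T$), use Proposition~3.3 on $T$, and then exploit the two fibrations to propagate projectivity to $\widehat{X}_{t_i}$ and finally to $X_{t_i}$.
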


\begin{proof}
Keeping the notation in Proposition 3.6, we know that 
$T=\mathbb{C}^{n-r}/\Gamma'$ 
is a torus with a finite group action $G=\Gamma/ \Gamma'$.
Let $\widehat{X}=\widetilde{X}/ \Gamma'$.
By Lemma 3.2, there exists a deformation of $T$ 
$$\pi: \mathcal{T}\rightarrow \Delta$$
such that 
$G$ acts holomorphically fiberwise.
Therefore this deformation induces 
the deformations of $\widehat{X}$ and $X$ 
by preserving the complex structure on $Y^{r}$.
We denote 
\begin{equation}\label{twodeformation}
\widehat{\mathcal{X}} \rightarrow \Delta\qquad 
\text{and}\qquad \mathcal{X}\rightarrow \Delta .
\end{equation}
Thanks to the construction, $X_{t}$ is the $G$-quotient of $\widetilde{X}_{t}/\Gamma'$,
where $X_{t}$ and $\widetilde{X}_{t}/\Gamma'$ are 
the fibers over $t\in \Delta$ of the above deformations.

Let $t_{i}\rightarrow 0$ be the sequence in Proposition 3.3 such that
$T_{t_{i}}$ are projective. 
By Proposition 3.6, we have two fibrations: 
$$\widehat{X}_{t_{i}} \rightarrow T_{t_{i}} 
\qquad \text{and} \qquad \widehat{X}_{t_{i}} \rightarrow Y^{r}/ \Gamma' .$$
Thanks to the projectivity of $T_{t_{i}}$ and Remark \ref{remarkWZ} of Proposition 3.6,
$\widehat{X}_{t_{i}} $ is thus projective.
Therefore $X_{t_{i}}$ is projective and the proposition is proved.
\end{proof}

\section{A deformation proposition}

The following sections are devoted to the deformation problem of compact
Kähler manifolds with nef tangent bundles.
We discuss in this section how to deform varieties that are defined by certain
numerically flat fibrations.
We first prove a preparatory lemma.

\begin{lemme}
Let $X$ be a compact K\"{a}hler manifold and 
let $E$ be a numerically flat vector bundle on $X$ possessing a filtration
\begin{equation}\label{numericalfiltration}
0=E_{0}\subset E_{1}\subset\cdots\subset E_{m}=E 
\end{equation}
such that the quotients $E_{i}/ E_{i-1}$ are irreducible hermitian flat vector bundles. 
Then $E$ is a local system and 
all elements in $H^{0}(X, E)$ are parallel with respect to the natural local system induced by the filtration \eqref{numericalfiltration}.

In particular, if there are two such filtrations, 
the transformation matrices between these two local systems should be locally constant.
\end{lemme}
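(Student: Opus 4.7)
The plan is to induct on the filtration length $m$. The base case $m = 1$ has $E$ itself an irreducible hermitian flat bundle, hence a local system via its Chern connection $\nabla$: for any $s \in H^{0}(X, E)$ the curvature-free Bochner identity $i\partial\bar\partial\|s\|^{2} = i\langle \nabla^{1,0}s, \nabla^{1,0}s\rangle \geq 0$ shows that $\|s\|^{2}$ is plurisubharmonic, hence constant on the compact manifold $X$, which forces $\nabla^{1,0}s \equiv 0$ and therefore $\nabla s = 0$.

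For the inductive step I would consider the extension $0 \to E_{m-1} \to E \to Q \to 0$ with $Q = E_{m}/E_{m-1}$ irreducible hermitian flat. By the inductive hypothesis $E_{m-1}$ carries a natural flat connection $\nabla_{m-1}$ compatible with its subfiltration; together with the flat Chern connection $\nabla_{Q}$ on $Q$, and using Proposition 2.3 to know that $E$ is abstractly a local system, one produces a flat connection $\nabla$ on $E$ preserving $E_{m-1}$ and inducing $\nabla_{m-1}$, $\nabla_{Q}$ on the sub and the quotient respectively; this is the ``natural local system induced by the filtration''. Now given $s \in H^{0}(X, E)$, project to $\bar{s} \in H^{0}(X, Q)$. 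Since $Q$ is irreducible hermitian flat, a non-zero parallel section in $Q$ would span a trivial sub-line-bundle, so either $Q$ is non-trivial and $H^{0}(X, Q) = 0$, in which case $s \in H^{0}(X, E_{m-1})$ and induction applies, or $Q \cong \mathcal{O}_{X}$ and the connecting map $H^{0}(X, Q) \to H^{1}(X, E_{m-1})$ sends $1$ to the holomorphic extension class $\epsilon$: if $\epsilon \neq 0$ then $\bar{s} = 0$ by exactness and we reduce to the previous case, while if $\epsilon = 0$ the sequence splits holomorphically and each component of $s$ is parallel by the induction hypothesis.

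The last statement about two filtrations follows by applying the main assertion to $\Hom(E, E) = E^{*} \otimes E$, which is numerically flat and inherits a tensor-product filtration from the two given filtrations (on $E$ and the dual one on $E^{*}$). The identity map $\Id_{E}$ is a global holomorphic section of $\Hom(E, E)$, hence by the main part it is parallel with respect to the natural local system on $\Hom(E, E)$; expressed in local flat frames of the two local system structures on $E$, this parallelism says precisely that the change-of-frame matrices are locally constant.

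The main obstacle will be the inductive construction of the filtration-compatible flat connection $\nabla$ on $E$ that I invoked above: even granting Simpson's result that $E$ is abstractly a local system, one must exhibit a flat connection that simultaneously restricts to $\nabla_{m-1}$ on $E_{m-1}$ and induces $\nabla_{Q}$ on the quotient. The cleanest route is to solve a $\bar\partial$-equation on the extension class in $H^{1}(X, \Hom(Q, E_{m-1}))$ using the harmonic theory available on each hermitian flat graded piece, so that one obtains a smooth splitting whose associated extension connection is flat and filtration-preserving.
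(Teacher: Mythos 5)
Your approach matches the paper's almost line by line: the long exact sequence attached to $0\to E_{m-1}\to E\to Q\to 0$, the case split on whether the irreducible hermitian flat quotient $Q$ is $\mathcal{O}_X$ or not (so that either $H^{0}(X,Q)=0$ or $H^{0}(X,Q)=\mathbb{C}$ and the connecting map $\delta$ kills it when the extension class is nonzero), the inductive descent to $E_{m-1}$, and the passage to $\Hom(E,E)=E^{*}\otimes E$ with the identity section for the final statement. The one thing you flag as the ``main obstacle'' --- the existence of a flat connection on $E$ that preserves the filtration and induces on each graded piece its hermitian flat connection --- is precisely what the paper takes as input from \cite[Corollary 3.10]{Sim}: the filtration by subbundles with hermitian flat quotients corresponds, under Simpson's equivalence between numerically flat bundles and filtered local systems, to a filtration of a local system, so the Gauss--Manin connection with the stated compatibility comes for free. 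Given that citation, the $\bar\partial$-equation/harmonic-theory construction you sketch at the end is unnecessary, and would in any case require more care than your sketch suggests (one would need to show that some $C^{\infty}$ splitting makes the induced extension connection flat, which is not a purely $\bar\partial$-level problem). Two smaller remarks: your explicit base case $m=1$ via the Bochner argument is fine, and is what the paper leaves implicit; and in your $\epsilon=0$ subcase, arguing that each component of $s$ is parallel still requires knowing that Simpson's flat connection splits along the holomorphic splitting --- the paper avoids this by phrasing the reduction so that it only needs to treat the nontrivial-extension case, while split steps are absorbed into the recursion.
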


\begin{proof}
Thanks to \cite[Corollary 3.10]{Sim}, 
the filtration \eqref{numericalfiltration} induces a natural local system on $E$ and 
the natural Gauss-Manin connection on $E$ preserves the filtration \eqref{numericalfiltration}
(i.e., the connection on each successive quotient $E_i/E_{i-1}$ 
induced by the Gauss-Manin connection on $E$ is the natural hermitian flat connection on $E_i/E_{i-1}$).
Using the recurrence process, 
to prove that all elements in $H^{0}(X, E)$ are parallel with respect to the local system,
it is sufficient to prove that
if $E$ is a non trivial extension 
\begin{equation}\label{extensionsequence}
\begin{CD}
0 @>>> E_{m-1} @>i>>  E 
@>>> E_m /E_{m-1} @>>> 0  
\end{CD} ,
\end{equation}
then $H^{0}(X, E)= i (H^{0}(X, E_{m-1}))$.
To prove this, we first note that \eqref{extensionsequence} implies the exact sequence
$$\begin{CD} 0 @>>> H^{0}(X, E_{m-1})
 @>i>> H^{0}(X, E) @>>>H^{0}(X, E_{m}/E_{m-1})
@>\delta>> H^{1}(X, E_{m-1})\end{CD} .$$

{\em Case 1: $E_{m}/E_{m-1}\neq \mathcal{O}_X$.} 
Since $E_{m}/ E_{m-1}$ is an irreducible hermitian flat bundle,
we have 
\begin{equation}\label{irreduciblenull}
H^{0}(X, E_{m}/E_{m-1})=0.
\end{equation}
Using the above exact sequence, we obtain $H^{0}(X, E)= i (H^{0}(X, E_{m-1}))$.

{\em Case 2: $E_{m}/E_{m-1}= \mathcal{O}_X$ .}
Since $h^{0}(X, \mathcal{O}_{X})=1$ and $E$ is a non trivial extension,
we obtain that $\delta$ in the exact sequence is injective.
Therefore $i(H^{0}(X, E_{m-1}))= H^{0}(X, E)$.
By recurrence, all elements in $H^{0}(X, E)$ should be parallel 
with respect to the natural local system induced by \eqref{numericalfiltration}.

For the second part of the lemma, if there is another filtration
$$0=E'_{0}\subset E'_{1}\subset\cdots\subset E'_{n}=E ,$$
then it induces a filtration on $E^{*}$.
Using this filtration on $E^{*}$ and the filtration \eqref{numericalfiltration} on $E$,
we get a natural filtration on $\Hom (E, E)=E^{*}\otimes E$.
Applying the first part of the lemma, 
the natural identity element 
$\id\in H^{0}(X, \Hom (E, E))$ should be parallel with respect to the filtration.
In other words, 
the transformation matrices between these two filtrations should be locally constant.
\end{proof}

\begin{remark}
We should remark that for a general local system on a compact Kähler manifold, the global sections may not be parallel with respect to the 
flat connection.
\end{remark}

\begin{prop}
Let $X$ be a Kähler manifold possessing a submersion 
$\pi: X\rightarrow T$, 
where $T$ is a finite étale quotient of a torus. 
Assume that $-K_{X}$ is nef and relatively ample.
If moreover $E_{m}=\pi_{*}(-mK_{X})$ is numerically flat for any $m > 0$, 
then there is a smooth deformation of the fibration which can be realized as:
$$\begin{CD} \mathcal{X} @>\pi >> \mathcal{T} @> \pi_{1}>> \Delta \end{CD}$$ 
such that 
$\pi_{1}: \mathcal{T} \rightarrow \Delta$
is the local universal deformation of $T$ and the central fiber is $X\rightarrow
T$.

Moreover, let $T_{s}$ be the fiber of $\pi_{1}$ over $s\in \Delta$,
and let $X_{s}$ be the fiber of $\pi\circ\pi_{1}$ over $s\in \Delta$.
Then the anticanonical bundle of $X_{s}$ is also nef and relatively ample with
respect to the fibration
$X_{s}\rightarrow T_{s}$ for any $s\in \Delta$.
\end{prop}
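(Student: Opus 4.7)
The strategy is to use the local-system structure guaranteed by the numerical flatness of $E_m = \pi_*(-mK_X)$ (via Proposition 2.3) to trivialize the fibration $X\to T$ on the universal cover, and then to canonically build the deformation from the universal deformation of $T$ by a quotient construction. After passing to the torus cover of $T$ and working equivariantly with respect to the finite Galois group, I may assume $T=\bC^r/\Gamma$ is itself a torus. For $m$ large enough, relative ampleness of $-K_X$ gives a closed embedding $X\hookrightarrow \PP(E_m^*)$ over $T$ whose homogeneous ideal is generated by the kernels of the multiplication maps
$$\mu_{k,m}:\mathrm{Sym}^k E_m \to E_{km}.$$
Since both source and target are numerically flat, so is $\Hom(\mathrm{Sym}^k E_m,E_{km})$, and Lemma 4.1 applied to it shows $\mu_{k,m}$ is parallel with respect to the associated local-system connection; equivalently, its pullback $\widetilde\mu_{k,m}$ to $\widetilde T=\bC^r$ is a constant linear map.

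Pulling back everything to $\widetilde T$, all local systems $E_m$ become trivial and the equations cutting out $\widetilde X:=X\times_T\widetilde T\hookrightarrow \PP(H^0(F,-mK_F)^*)\times \widetilde T$ are constant in the $\widetilde T$-direction. Hence $\widetilde X=F\times \widetilde T$, where $F$ is the common Fano fibre, and $X=(F\times\widetilde T)/\Gamma$, with $\Gamma$ acting by its lattice action on $\widetilde T$ and by the monodromy representation $\rho:\Gamma\to \Aut(F)$ induced by the local system $E_m$ on $F$. The local universal deformation $\mathcal T\to\Delta$ of the torus $T$ is then realized as $(\bC^r\times\Delta)/\Gamma$ for a $\Delta$-family of lattice actions. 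Combining with the (undeformed) $\rho$-action of $\Gamma$ on $F$ and taking the diagonal quotient yields the smooth family
$$\mathcal X\;:=\;(F\times\bC^r\times\Delta)/\Gamma\;\longrightarrow\;\mathcal T\;\longrightarrow\;\Delta,$$
whose central fibre is $X\to T$ by construction.

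For the ``moreover'' part, each fibre of $\mathcal X_s\to\mathcal T_s$ is $F$, so $-K_{X_s}|_F=-K_F$ is ample and relative ampleness holds. For nefness, observe that $E_{m,s}:=\pi_{s,*}(-mK_{X_s})$ corresponds to the same monodromy representation on $H^0(F,-mK_F)$ as $E_m$, now viewed as a local system on $T_s$, and is therefore again numerically flat. Its pullback $\pi_s^* E_{m,s}$ is then nef, and the relative-ampleness surjection $\pi_s^* E_{m,s}\twoheadrightarrow -mK_{X_s}$ (for $m$ large) exhibits $-mK_{X_s}$, and hence $-K_{X_s}$, as a line-bundle quotient of a nef vector bundle, hence nef. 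The main obstacle is the rigidity step in the first paragraph: showing that all the multiplication maps $\mu_{k,m}$ are simultaneously parallel, so that $\widetilde X$ becomes a genuine product over $\widetilde T$. This depends crucially on Lemma 4.1 applied to each $\Hom(\mathrm{Sym}^k E_m,E_{km})$ and on the fact that the $E_m$ are interconnected by the $\mu_{k,m}$, so that they carry a single coherent local-system structure on $T$ that deforms in one piece as $T$ deforms.
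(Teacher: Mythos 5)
Your proposal is correct and follows the same underlying route as the paper's proof: both hinge on the numerical flatness of $E_m$ giving a local-system structure, on Lemma 4.1 forcing the defining equations of $X\hookrightarrow\PP(E_m)$ to be (locally) constant in that local system, and then on deforming the complex structure of $T$ while keeping those constant equations. The main difference is one of presentation: the paper works locally with transition matrices of $S^pE_m$ and the numerically flat subbundle $V_{m,p}=\pi_*(\sI_X\otimes\sO_{\PP(E_m)}(p))$ (quoting [DPS94, Prop.~3.19] for its flatness), whereas you globalize by pulling back to the universal cover $\widetilde T=\bC^r$, noting that parallelism of the multiplication maps $\mu_{k,m}$ (via Lemma 4.1 applied to $\Hom(\mathrm{Sym}^kE_m,E_{km})$) forces $\widetilde X=F\times\widetilde T$, and then realizing both $X$ and the deformation $\sX$ as explicit quotients $(F\times\bC^r\times\Delta)/\Gamma$. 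Your globalized picture makes the ``moreover'' part cleaner: relative ampleness holds because every fibre is literally the fixed Fano manifold $F$, avoiding the paper's continuity argument via Yau's theorem (which, as stated there, only gives ampleness in a neighbourhood of the central torus $T$ in $\mathcal T$), and nefness of $-mK_{X_s}$ follows as you say from the surjection $\pi_s^*E_{m,s}\twoheadrightarrow -mK_{X_s}$ with $E_{m,s}$ numerically flat, which parallels the paper's argument via $\sO_{\PP(\sE_m)}(1)|_{X_s}$. The one place you pass over quickly — reducing to the case $T$ a torus ``working equivariantly'' — is the analogue of the paper's appeal to [Ran, Prop.~2.3] for unobstructedness of the quotient $T$, and should be spelled out (one needs the lifts of the finite Galois group to act on the family $(F\times\bC^r\times\Delta)$ compatibly with the varying lattice action), but this is a technical gap that both proofs leave at roughly the same level of detail.
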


\begin{proof}

Thanks to \cite[Theorem 3.20]{DPS94}, we have the embeddings $X\hookrightarrow
\mathbb{P}(E_{m})$ 
and $V_{m,p}=\pi_{*}(\mathcal{I}_{X}\otimes\mathcal{O}_{\mathbb{P}(E_{m})}(p))\subset S^{p}E_{m}$
for $m, p$ large enough.
More importantly, $V_{m,p}$ and  $S^{p}E_{m}$ are numerically flat vector bundles.
By the proof of Lemma 4.1, $ S^{p}E_{m}$ is in fact a local system on $T$ 
which be represented by locally constant transformation matrices
and its subbundle $V_{m,p}$ can be represented by the upper blocks of the transformation matrices. 

Thanks to \cite[Proposition 2.3]{Ran}, the deformation of $T$ is
unobstructed. 
Let $\pi_{1}: \mathcal{T}\rightarrow \Delta$ be the local universal deformation of $T$. 
Then the transformation matrices of  $S^{p}E_{m}, V_{m,p}$ 
are always holomorphic under the deformation of the complex structure on $T$.
Therefore we get the holomorphic deformations of these vector bundles by
changing the complex strucutre on $T$: 
$$\xymatrix{
\mathcal{V}_{m,p}\ar[rd]\ar@{^{(}->}[r] & S^{p}\mathcal{E}_{m}\ar[d] \\
& \mathcal{T}\ar[d]\\
&\Delta}
\qquad\text{and}\qquad
\xymatrix{
\mathcal{V}_{m,p}\times \mathbb{P}(\mathcal{E}_{m})\ar[d] \\
\mathcal{T}\ar[d]\\
\Delta} .
$$

We first note that, by the discussion after \cite[Proposition 3.19]{DPS94},
a local basis of $V_{m,p}$ 
gives the determinant polynomials of $X$ in $\mathbb{P}(E_{m})$ over $U$.
Now we have two filtrations on $S^{p}E_{m}$, 
one is induced by the inclusion $V_{m,p}\subset S^{p}E_{m}$ 
and the another is induced by a filtration on $E_{m}$.
Thanks to Lemma 4.1, on any small open set $U\subset T$,
we can choose a local basis of $V_{m,p}$ with constant coefficients with respect to $E_{m}$,
i.e., 
the coefficients of the defining polynomials of $X$ in $\mathbb{P}(E_{m})$ over $U$ can
be choosen as constants.
Then the defining equations $V_{m,p}$ over $U\times s$ are the same as
$V_{m,p}$ over $U\times \{ 0\}$ for $s\in \Delta$.
Therefore $\mathcal{V}_{m,p}$ defines a smooth deformation of $X$, 
we denote it
$$\begin{CD} \mathcal{X} @>\pi >> \mathcal{T} @> \pi_{1}>> \Delta \end{CD}.$$
 
As for the second part of the proposition,
we first prove that $-K_{X_{t}}$ is ample on $X_{t}$ 
where $X_{t}$ is the fiber of $\mathcal{X}\rightarrow \mathcal{T}$ over $t\in \mathcal{T}$
and $t$ is in a neighborhood of $T$ in $\mathcal{T}$.
Let $t_0 \in T$. Let $U_{t_0}$ be a small neighborhood of $t_0$ in $\mathcal{T}$. 
Since $-K_{X_{t_{0}}}$ is ample, by \cite{Yau}
there exists a Kähler metric $\omega_{t_{0}}$ on $X_{t_{0}}$ such that $i\Theta_{\omega_{t_{0}}}(-K_{X_{t_{0}}})> 0$.
By a standard continuity argument (cf. for instance \cite[Theorem 3.1]{Sch}), if $U_{t_0}$ is small enough,
we can construct Kähler metrics $\omega_{t}$ on $X_{t}$ for any $t\in U_{t_0}$ 
and by continuity
the curvatures $i\Theta_{\omega_{t}}(-K_{X_{t}})$ are strictly positive.
Therefore $-K_{X_{t}}$ is ample on $X_{t}$ for any $t$ sufficient close to $t_{0}$ in $\mathcal{T}$.
Letting $t_{0}$ run over $T$,
then $-K_{X_{t}}$ is ample for all $t$ in a neighborhood of $T$ in $\mathcal{T}$.

We need also prove that $-K_{X_{s}}$ is nef on $X_{s}$,
where $X_{s}$ is the fiber of $\pi\circ\pi_{1}$ over $s\in \Delta$.
Let $(E_{m})_{s}$ be the fiber of $\mathcal{E}_{m}\rightarrow \Delta$ over $s$.
By construction, $(E_{m})_{s}$ is numerically flat on $T_{s}$, 
where $T_{s}$ is fiber of $\pi_{1}$ over $s$.
Then $\mathcal{O}_{\mathbb{P}(\mathcal{E}_{m})}(1)$ is nef on $\mathbb{P}(E_{m})_{s}$.
Since $X_{s}$ is embedded in $\mathbb{P}(E_{m})_{s}$,
$\mathcal{O}_{\mathbb{P}(\mathcal{E}_{m})}(1)|_{X_{s}}$
is also nef for any $s\in \Delta$.
If $s=0$, we have
$$\mathcal{O}_{\mathbb{P}(\mathcal{E}_{m})}(1)|_{X_{s}}=-mK_{X} .$$
Therefore
$$c_{1}(\mathcal{O}_{\mathbb{P}(\mathcal{E}_{m})}(1)|_{X_{s}})=c_{1}(-mK_{X_{s}})$$
for $s\in \Delta$ by the rigidity of integral classes.
Then the nefness of $\mathcal{O}_{\mathbb{P}(\mathcal{E}_{m})}(1)|_{X_{s}}$ implies that
$-mK_{X_{s}}$ is nef for all $s\in\Delta$.

The proposition is proved.

\end{proof}

\begin{remark}
In general, nefness is not an open condition
in families 
(cf.  \cite[Theorem 1.2.17]{Laz04}).
Thanks to the above construction, 
nefness is preserved under
deformation in our special case.
\end{remark}

Thanks to Proposition 4.2, we immediately get the following corollary.
\begin{cor}
Let $X$ be a compact Kähler manifold satisfying the condition in Proposition 4.2. 
Then $X$ can be approximated by projective varieties. Moreover, $\nd(-K_{X})=n-\dim T$. 
\end{cor}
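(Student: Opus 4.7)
The plan is to combine Proposition 4.2 with the density of projective tori in the Kuranishi space of a complex torus, and then transport the numerical dimension computation back to $X$ via topological invariance of Chern classes in the smooth family.

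First I would apply Proposition 4.2 to obtain the smooth deformation
$$\begin{CD} \mathcal{X} @>\pi >> \mathcal{T} @> \pi_{1}>> \Delta \end{CD}$$
with central fiber $X \to T$, in which $\pi_1$ realizes the local universal deformation of $T$ and each fiber $X_s \to T_s$ has $-K_{X_s}$ nef and relatively ample. Since $T$ is a finite étale quotient of a torus, the universal deformation lifts to that of the torus cover, and the standard density argument (rational $(1,1)$-classes are dense in $H^{1,1}$ in a varying Hodge structure, after which one uses the Appell--Humbert/Riemann form) produces a sequence $t_i \to 0$ in $\Delta$ such that $T_{t_i}$ is an étale quotient of an abelian variety, hence projective. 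For such $t_i$, pick an ample line bundle $H$ on $T_{t_i}$; since $-K_{X_{t_i}}$ is $\pi$-ample, for $N\gg 0$ the line bundle $-K_{X_{t_i}} + N\pi^{*}H$ is ample on $X_{t_i}$, so $X_{t_i}$ is projective. This yields the approximation statement.

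For the equality $\nd(-K_X) = n - \dim T$, I would use that the intersection numbers $c_{1}(-K_{X_s})^{k}$ are locally constant in $s$ (they are topological invariants of the underlying $C^{\infty}$-family, and $c_1(-K_{X_s})$ is an integer class which varies continuously), so $\nd(-K_{X_s})$ is independent of $s$. It then suffices to compute $\nd(-K_{X_{t_i}})$ on a projective fiber. The lower bound $\nd(-K_{X_{t_i}}) \geq n-r$ is immediate: since $-K_{X_{t_i}}$ is relatively ample on fibers of dimension $n-r$, the class $\pi_{*}((-K_{X_{t_i}})^{n-r})$ is a strictly positive multiple of $[T_{t_i}]$, in particular non-zero. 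For the upper bound I argue by contradiction: assume $\nd(-K_{X_{t_i}}) \geq n-r+1$. Since $-K_{X_{t_i}}$ is nef on the projective manifold $X_{t_i}$, the Kawamata--Viehweg vanishing theorem in the form for nef line bundles yields
$$H^{q}(X_{t_i}, K_{X_{t_i}} \otimes (-K_{X_{t_i}})) = H^{q}(X_{t_i}, \mathcal{O}_{X_{t_i}}) = 0 \qquad \text{for } q \geq r.$$
On the other hand, the fibers of $\pi$ are Fano manifolds (as $-K_{X_{t_i}}$ is relatively ample), so $H^{j}(\text{fiber}, \mathcal{O}) = 0$ for $j \geq 1$ by Kodaira vanishing; the Leray spectral sequence degenerates to give $H^{r}(X_{t_i}, \mathcal{O}_{X_{t_i}}) = H^{r}(T_{t_i}, \mathcal{O}_{T_{t_i}})$, which is non-zero since $T_{t_i}$ is an étale quotient of an $r$-dimensional abelian variety. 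This is the desired contradiction.

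The main technical point is applying Kawamata--Viehweg in the precise form ``$-K$ nef with numerical dimension $\nu$ implies $H^{q}(\mathcal{O}) = 0$ for $q > n - \nu$'' on the projective fiber, and justifying the topological constancy of $\nd(-K_{X_s})$ across the family; both are standard but deserve a pointer. The projectivity of the density step is automatic once one recalls that $-K_{X_{t_i}}$ is nef on the total space and $\pi$-ample, so adding a large pullback of an ample class from the projective $T_{t_i}$ produces an ample polarization on $X_{t_i}$.
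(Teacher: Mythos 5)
Your argument follows the same route as the paper: use Proposition 4.2 to produce the deformation over the Kuranishi space of $T$, obtain a dense sequence of projective base fibers $T_{t_i}$, upgrade to projectivity of $X_{t_i}$ via relative ampleness of $-K$, and then derive the numerical dimension equality by transferring the computation to a projective fiber and invoking Kawamata--Viehweg vanishing. Two points of comparison are worth recording, and one of your steps has a gap.

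First, for the constancy of $\nd(-K_{X_s})$, the paper argues by continuity of the intersection numbers $\int_{X_{s}}(-K_{X_{s}})^{n-r+1}\wedge\omega_{X_{s}}^{r-1}$ against a smoothly varying Kähler family, whereas you invoke the rigidity of the integral class $c_{1}(-K_{X_{s}})$ in the fixed $H^{2}(X,\mathbb{Z})$ of the underlying smooth manifold. Both work, and your version is slightly cleaner since it makes $\nd$ literally constant rather than merely bounded below along a subsequence.

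Second, the paper is terse about the actual Kawamata--Viehweg contradiction and you undertake to spell it out. Here you have a genuine gap: you assert that $H^{r}(T_{t_i},\mathcal{O}_{T_{t_i}})\neq 0$ ``since $T_{t_i}$ is an étale quotient of an $r$-dimensional abelian variety.'' This is false in general. If $T=T_{0}/G$ with $G$ acting freely on the torus $T_{0}$, then $H^{r}(T,\mathcal{O}_{T})=H^{r}(T_{0},\mathcal{O}_{T_{0}})^{G}$, and $G$ acts on the one-dimensional space $H^{0,r}(T_{0})=\bigwedge^{r}H^{0,1}(T_{0})$ by a character which may well be nontrivial; for hyperelliptic surfaces, for instance, $H^{2}(\mathcal{O})=0$ even though the surface is an étale quotient of a $2$-torus. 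To repair this, pass to the finite étale cover $\widetilde{X}=X\times_{T}T_{0}\to T_{0}$ (and its deformation, obtained by pulling back the family over the universal deformation of $T$). Since $-K_{\widetilde{X}_{t_i}}$ is the pullback of $-K_{X_{t_i}}$, it is nef with the same numerical dimension, and now $H^{r}(\widetilde{X}_{t_i},\mathcal{O})=H^{r}(T_{0,t_i},\mathcal{O})\cong\mathbb{C}\neq 0$ because $T_{0,t_i}$ is an honest $r$-dimensional abelian variety; applying Kawamata--Viehweg on $\widetilde{X}_{t_i}$ then produces the contradiction. With this reduction inserted, your proof is correct.
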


\begin{proof}
We keep the notations in Proposition 4.2.
By Proposition 4.2, there exists a deformation of $X\rightarrow T$:
$$\begin{CD} \mathcal{X} @>\pi >> \mathcal{T} @> \pi_{1}>> \Delta \end{CD}$$ 
such that $\mathcal{T}\rightarrow \Delta$ is the local universal deformation of $T$
and $X\rightarrow T$ is the central fiber of this deformation.
By Proposition 3.1, there exists a sequence $s_{i}\rightarrow 0$ in $\Delta$ 
such that all $T_{s_{i}}$ are projective.
Using Proposition 4.2, we know that the fibers of
$$X_{s_{i}}\rightarrow T_{s_{i}}$$ 
are Fano manifolds. 
Then all $X_{s_{i}}$ are projective
and $X$ can be approximated by projective manifolds.

As for the second part of the corollary, by observing that $-K_{X}$ is
relatively ample, we have $\nd(-K_{X})\geq n-r$.
If $\nd(-K_{X})\geq n-r+1$, by the definition of numerical dimension we have
$$\int_{X}(-K_{X})^{n-r+1}\wedge\omega_{X}^{r-1}> 0 .$$
By continuity,
\begin{equation}\label{positiveneighbor}
 \int_{X_{s_{i}}}(-K_{X_{s_{i}}})^{n-r+1}\wedge\omega_{X_{s_{i}}}^{r-1}> 0
\end{equation}
for $|s_{i}|\ll 1$.
Thanks to Proposition 4.2, $-K_{X_{s_{i}}}$ are nef.
Then \eqref{positiveneighbor} implies the existence of a projective variety $X_{s_{i}}$ such that $-K_{X_{s_{i}}}$ is nef 
and $\nd(-K_{X_{s_{i}}})\geq n-r+1$,
which contradicts with the Kawamata-Viehweg vanishing theorem for projective varieties.
We get a contradiction and the corollary is proved.
\end{proof}

\section{A Kawamata-Viehweg vanishing theorem}

As pointed out in the introduction, 
when $X$ is a projective variety of dimension $n$
and $L$ is a nef line bundle on $X$ with $\nd (L)=k$,
we have the Kawamata-Viehweg vanishing theorem:
$$H^{r}(X, K_{X}+L)=0\qquad \text{for} \hspace{5 pt} r>n-k.$$
But it is probably a difficult problem to 
prove this vanishing theorem for a non projective 
compact Kähler manifold.
We will prove in this section a Kawamata-Viehweg vanishing theorem 
for certain Kähler manifolds. 
More precisely, we say that a compact Kähler manifold $X$ of dimension $n$ 
and a nef line bundle $L$ satisfy Conditions $(*)$, 
if

\textbf{Conditions} $(*)$:
Let $L$ be a nef line bundle on a  compact Kähler manifold $(X, \omega_X)$ of dimension $n$ . 
We say that $(X,L)$ satisfies Conditions $(*)$, if

$(i)$
There exists a smooth two steps tower fibration 
$$\begin{CD} X @>\pi >> T @> \pi_{1}>> S \end{CD}$$ 
where $\pi$ is a submersion to a smooth variety $T$ of dimension $r$,
and $\pi_{1}$ is a submersion to a smooth curve $S$.

$(ii)$
The nef line bundle $L$ is relatively ample with respect to $\pi$
and 
$$\pi_{*}(L^{n-r+1})=\pi_{1}^{*}(\mathcal{O}_{S}(1))$$ 
for some ample line bundle $\mathcal{O}_{S}(1)$ on $S$. 

\begin{remark}\label{hovanskiiinequality}
We first remark that $(ii)$ of Conditions $(*)$ implies that 
$$\nd(L)> n-r .$$
Using the Hovanskii-Teissier inequality for arbitrary compact K\"{a}hler manifolds \cite{Gro},
we obtain
\begin{equation}\label{interhovan}
\int_X L^{n-r+1}\wedge\omega_T ^{r-2}\wedge \pi^{*}\pi_{1}^{*}(\mathcal{O}_{S}(1))\geq
\end{equation}
$$ 
(\int_X L^{n-r+p}\wedge\omega_T ^{r-p-1}\wedge\pi^{*}\pi_{1}^{*}(\mathcal{O}_{S}(1)))^{\frac{1}{p}} 
(\int_X L^{n-r}\wedge\omega_T ^{r-1}\wedge\pi^{*}\pi_{1}^{*}(\mathcal{O}_{S}(1)))^{\frac{p-1}{p}} ,$$
where $\omega_T$ is a K\"{a}hler metric on $T$ and $p>1$.
Since $\dim S=1$, $(ii)$ in Conditions $(*)$ implies that 
$$\int_X L^{n-r+1}\wedge \pi^{*}\pi_{1}^{*}(\mathcal{O}_{S}(1))\wedge\omega_T ^{r-2}=0 .$$
Moreover, the relative ampleness of $L$ implies that 
$$\int_X L^{n-r}\wedge\omega_T ^{r-1}\wedge\pi^{*}\pi_{1}^{*}(\mathcal{O}_{S}(1)) > 0 .$$
By \eqref{interhovan}, we obtain
\begin{equation}\label{intercorhovanskii}
\int_X L^{n-r+p}\wedge\omega_T ^{r-p-1}\wedge \pi^{*}\pi_{1}^{*}(\mathcal{O}_{S}(1))=0 \qquad\text{for any }p\geq 1.
\end{equation}

Suppose that $\nd (L)=n-r+t$. If $t\geq 2$,
using again the Hovanskii-Teissier inequality,
we have
$$\int_X L^{n-r+1}\wedge\omega_X ^{r-2}\wedge \pi^{*}\pi_{1}^{*}(\mathcal{O}_{S}(1))\geq$$
$$ 
(\int_X L^{n-r+t}\wedge\omega_X ^{r-t-1}\wedge\pi^{*}\pi_{1}^{*}(\mathcal{O}_{S}(1)))^{\frac{1}{t}} 
(\int_X L^{n-r}\wedge\omega_X ^{r-1}\wedge\pi^{*}\pi_{1}^{*}(\mathcal{O}_{S}(1)))^{\frac{t-1}{t}} .$$
Since $L$ is relatively ample, $\omega_X$ is controled by $L+C\cdot \omega_T$ for some $C> 0$ large enough.
Then \eqref{intercorhovanskii} implies that 
$$\int_X L^{n-r+1}\wedge \pi^{*}\pi_{1}^{*}(\mathcal{O}_{S}(1))\wedge\omega_X ^{r-2}=0 .$$
Moreover, the relative ampleness of $L$ implies that 
$$\int_X L^{n-r}\wedge\omega_X ^{r-1}\wedge\pi^{*}\pi_{1}^{*}(\mathcal{O}_{S}(1)) > 0 .$$
we obtain finally
\begin{equation}\label{corhovanskii}
L^{n-r+t}\wedge \pi^{*}\pi_{1}^{*}(\mathcal{O}_{S}(1))=0 .
\end{equation}

\end{remark}

We will prove in this section that if $(X,L)$ satisfies Conditions $(*)$, 
then
$$H^{p}(X,K_{X}+L)=0 \qquad \text{for} \hspace{5 pt}p\geq r.$$
Before the proof of this vanishing theorem, we first prove a useful lemma.
\begin{lemme}
Assume that $(X,L)$ satisfies Conditions $(*)$.
Then $L-c\pi^{*}\pi_{1}^{*}(\mathcal{O}_{S}(1))$ is pseudo-effective for some 
constant $c>0$.
\end{lemme}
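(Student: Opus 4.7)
The strategy is a Monge--Amp\`ere argument: solve, in each K\"ahler class $\{L\}+\epsilon\{\omega_{X}\}$ approximating $\{L\}$, a Monge--Amp\`ere equation whose right--hand side concentrates on $\alpha:=\pi^{*}\pi_{1}^{*}\omega_{S}$, and then extract a weak limit. Here $\omega_{S}$ is a K\"ahler form on $S$ representing $c_{1}(\mathcal{O}_{S}(1))$, $\omega_{X}$ is a fixed reference K\"ahler form on $X$, and $\alpha$ is a smooth semipositive $(1,1)$-form of rank one on $X$ (since $\pi_{1}\circ\pi\colon X\to S$ is a submersion onto a curve), with $\alpha^{2}=0$ and $\mathrm{tr}_{\omega_{X}}\alpha>0$ everywhere, so that $\alpha\wedge\omega_{X}^{n-1}$ is a strictly positive smooth volume form on $X$.

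For each $\epsilon>0$ the class $\{L\}+\epsilon\{\omega_{X}\}$ is K\"ahler because $L$ is nef, and Yau's theorem provides a smooth function $\varphi_{\epsilon}$ such that
$$\omega_{\epsilon}:=L+\epsilon\omega_{X}+dd^{c}\varphi_{\epsilon}>0,\qquad \omega_{\epsilon}^{n}=c_{\epsilon}\cdot\alpha\wedge\omega_{X}^{n-1},$$
where $c_{\epsilon}>0$ is determined by matching total masses. I would then control the asymptotics of $c_{\epsilon}$ as $\epsilon\to 0$ by expanding $\int_{X}(L+\epsilon\omega_{X})^{n}$ in powers of $\epsilon$, using both the cohomological vanishing $L^{p}=0$ for $p>\nd(L)$ and the refined identity $L^{n-r+t}\wedge\alpha=0$ from Remark \ref{hovanskiiinequality} (with $t:=\nd(L)-(n-r)\geq 1$), together with the Hovanskii--Teissier inequalities. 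After the normalisation $\sup\varphi_{\epsilon}=0$, a standard Ko\l odziej-type $L^{\infty}$-estimate gives uniform control on $\varphi_{\epsilon}$, so a subsequence $\omega_{\epsilon_{k}}$ converges weakly to a closed positive current $T$ representing $\{L\}$.

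The crucial point is then to prove $T\geq c\alpha$ as currents for some uniform $c>0$. Once this is established, $T-c\alpha$ is a closed positive current in the class $\{L-c\pi^{*}\pi_{1}^{*}\mathcal{O}_{S}(1)\}$, which proves the lemma. The expected heuristic is that since $\alpha\wedge\omega_{X}^{n-1}$ has rank one, pointing in the direction singled out by $\alpha$, the K\"ahler form $\omega_{\epsilon}$ is forced to concentrate its dominant eigenvalue (relative to $\omega_{X}$) along that same direction, yielding a pointwise inequality $\omega_{\epsilon}\geq c\alpha$ that will survive the weak limit.

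Making this concentration argument rigorous is the main technical obstacle, because the Monge--Amp\`ere equation only prescribes the determinant of $\omega_{\epsilon}$ relative to $\omega_{X}$ and not the individual eigenvalues. The approach I would pursue combines the AM--GM inequality for these eigenvalues (calibrated by the normalisation of $c_{\epsilon}$) with the numerical restrictions of Remark \ref{hovanskiiinequality}; if necessary, I would refine the right--hand side of the Monge--Amp\`ere equation to a form better adapted to the tower $X\to T\to S$, for instance $\alpha\wedge\pi^{*}\omega_{T}^{r-1}\wedge\omega_{X}^{n-r}$ for a K\"ahler form $\omega_{T}$ on $T$, which localises the solution still further in the directions transverse to the fibres of $\pi$.
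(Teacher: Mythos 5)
Your overall strategy (K\"ahler deformation $\{L\}+\epsilon\{\omega_X\}$, a Monge--Amp\`ere equation, Hovanskii--Teissier, weak limit) is in the right spirit, and you correctly flag where the real difficulty lies. But the proposal as written has a genuine gap precisely at the step you call the ``crucial point'', and the heuristic you offer to close it is incorrect.

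The problem is that your right--hand side $\alpha\wedge\omega_X^{n-1}$ is just a smooth positive \emph{volume form}: once you wedge the rank-one $(1,1)$-form $\alpha$ with $\omega_X^{n-1}$ you obtain a top-degree measure, and a top-degree measure carries no directional information. The equation $\omega_\epsilon^n=c_\epsilon\,\alpha\wedge\omega_X^{n-1}$ therefore only prescribes $\det_{\omega_X}\omega_\epsilon$ and cannot by itself force any alignment of the eigenvectors of $\omega_\epsilon$ with the kernel or range of $\alpha$; indeed nothing prevents the small eigenvalue of $\omega_\epsilon$ from living precisely in the $\alpha$-direction. So the claim ``$\omega_\epsilon$ is forced to concentrate its dominant eigenvalue along $\alpha$'' does not follow, and the pointwise lower bound $T\ge c\alpha$ is not established. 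Replacing the reference measure by $\alpha\wedge\pi^*\omega_T^{r-1}\wedge\omega_X^{n-r}$ is subject to the same objection: it is still a fixed smooth density.

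What the paper does differently, and what actually makes the argument work, involves two ideas you are missing. First, the eigenvalues are computed not with respect to the background $\omega_X$ but with respect to a \emph{degenerating} reference form $\tfrac{i}{2\pi}\Theta_{\psi_\epsilon}(A)+\tau_2\omega$, with $\tau_2$ tiny relative to the other parameter $\tau_1$ in the MA scaling $\omega_\epsilon^n=C_{2,\epsilon}\tfrac{\tau_1^{r-t}}{\tau_2^{n-1}}\bigl(\tfrac{i}{2\pi}\Theta_{\psi_\epsilon}(A)+\tau_2\omega\bigr)^n$. Because the reference metric degenerates transversally to $A$ as $\tau_2\to 0$, the product $\prod\lambda_i$ is a controlled constant while the trace-type quantity $\int_X(\prod_{i\ge 2}\lambda_i)(\tfrac{i}{2\pi}\Theta_{\psi_\epsilon}(A)+\tau_2\omega)^n$ is bounded by $C\,\tau_1^{r-t}$ via Hovanskii--Teissier and the identity $L^{n-r+t}\wedge\pi^*\pi_1^*\mathcal{O}_S(1)=0$. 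This forces the smallest eigenvalue $\lambda_1$ to be bounded below by a uniform $\delta>0$ away from a set of small $(\tfrac{i}{2\pi}\Theta_{\psi_\epsilon}(A)+\tau_2\omega)^n$-measure. Second, rather than aiming at a pointwise inequality $T\ge c\alpha$ against the smooth form $\alpha$, the paper chooses metrics $\psi_\epsilon$ on $A$ whose curvature concentrates mass in an $\epsilon$-neighbourhood $V_\epsilon$ of a fixed fibre $X_s$; combining the eigenvalue bound with this concentration shows that the weak limit of $\tfrac{i}{2\pi}\Theta_h(L)+\tau_1\omega+dd^c\varphi_\epsilon$ dominates a fixed multiple of the current of integration $[X_s]$. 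Since $[X_s]$ represents the class $\pi^*\pi_1^*c_1(\mathcal{O}_S(1))$ (because $S$ is a curve), this yields pseudo-effectivity of $L-c\,\pi^*\pi_1^*\mathcal{O}_S(1)$ after letting $\tau_1\to 0$. The point is that proving $T\ge c[X_s]$ for one fibre is achievable by concentrating the MA measure, whereas proving a smooth lower bound $T\ge c\alpha$ everywhere is a much stronger pointwise assertion that the MA machinery does not directly supply.
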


\begin{proof}
We first explain the idea of the proof.
By using a Monge-Ampère equation, 
we can construct a sequence of metrics $\{\varphi_{\epsilon}\}$ on $L$, such
that
$$\frac{i}{2\pi}\Theta_{\varphi_{\epsilon}}(L)\geq c\pi^{*}\pi_{1}^{*}(\mathcal{O}_{S}(1))
\qquad\text{for all small}\hspace{5 pt} \epsilon .$$
Then 
$\frac{i}{2\pi}\Theta_{\varphi}(L)\geq c\pi^{*}\pi_{1}^{*}\mathcal{O}_{S}(1)$,
where $\varphi$ is a limit of some subsequence of $\{\varphi_{\epsilon}\}$.
In this way, the lemma would therefore be proved.
This idea comes from \cite{DP04}, 
but the proof here is in some sense much simpler 
because we do not need their diagonal trick in our case.

Observing first that $(ii)$ of Conditions $(*)$ 
implies that $\nd (L)> n-r$,
we can thus suppose that $\nd (L)=n-r+t$, for some $t\geq 1$.
For simplicity, 
we denote $\pi^{*}\pi_{1}^{*}\mathcal{O}_{S}(1)$ by $A$.
Let $s\in S$, and $X_{s}$ the fiber of $\pi\circ\pi_{1}$ over $s$.
We first fix a smooth metric $h_{0}$ on $\mathcal{O}_{S}(1)$. 
Thanks to the semi-positivity of $A$,
we can choose a sequence of smooth functions $\psi_{\epsilon}$ on $X$
such that for the metrics $h_{0}e^{-\psi_{\epsilon}}$ on $A$,
the curvature forms $\frac{i}{2\pi}\Theta_{\psi_{\epsilon}}(A)$ are
semi-positive
\footnote{Note that here $\psi_{\epsilon}$ are functions, but the $\varphi$'s in
Proposition 2.1 are metrics! Therefore in this lemma,
$\frac{i}{2\pi}\Theta_{\psi_{\epsilon}}(\mathcal{O}_{S}(1))=\frac{i}{2\pi}\Theta_{h_{0}}(\mathcal{O}_{S}
(1))+dd^{c}\psi_{\epsilon}$.},
and 
\begin{equation}\label{equ11}
\int_{V_{\epsilon}}\frac{i}{2\pi}\Theta_{\psi_{\epsilon}}(A)\wedge \omega^{n-1}\geq C_{1}
 \qquad \text{for}\hspace{5 pt} \epsilon\rightarrow 0
\end{equation}
where $V_{\epsilon}$ is an $\epsilon$ open neighborhood of $X_{s}$, 
and $C_{1}>0$ is a uniform constant. 
(All the constants $C_{i}$ below will be uniformly strictly positive.
When the uniform boundedness comes from obvious reasons, we will not make it
explicit. )

Let $\tau_{1}, \tau_{2}$ two constants such that
$1\gg \tau_{1}\gg \tau_{2}>0$ which will be made precise later. 
Let $h$ be a fixed smooth metric on $L$.
Thanks to the nefness of $L$, 
we can solve a Monge-Ampère equation:
\begin{equation}\label{equ12}
(\frac{i}{2\pi}\Theta_{h}(L)+\tau_{1}\omega+dd^{c}\varphi_{\epsilon}
)^{n}=C_{2,\epsilon}\frac{\tau_{1}^{r-t}}{\tau_{2}^{n-1}}(\frac{i}{2\pi}\Theta_{\psi_{
\epsilon}}(A)+\tau_{2}\omega)^{n},
\end{equation}
where 
$$C_{2,\epsilon}=\frac{(\frac{i}{2\pi}\Theta_{h}(L)+\tau_{1}\omega)^{n}\tau_{2}^
{n-1}}{\tau_{1}^{r-t}(\frac{i}{2\pi}\Theta_{\psi_{
\epsilon}}(A)+\tau_{2}\omega)^{n} }.$$
Since $\nd(L)=n-r+t$ and $\dim S=1 $, 
we have $\inf\limits_{\epsilon} C_{2,\epsilon}> 0$.

Let $\lambda_{1}\leq\lambda_{2}\leq\cdots\leq\lambda_{n}$ be the eigenvalues of 
$\frac{i}{2\pi}\Theta_{h}(L)+\tau_{1}\omega+dd^{c}\varphi_{\epsilon}$ with respect to
$\frac{i}{2\pi}\Theta_{\psi_{\epsilon}}(A)+\tau_{2}\omega$.
Then the Monge-Ampère equation \eqref{equ12} implies that 
\begin{equation}\label{equ13}
\prod_{i=1}^{n}\lambda_{i}(z)=C_{2,\epsilon}\frac{\tau_{1}^{r-t}}{\tau_{2}^{n-1}} 
\qquad \text{for any} \hspace{5 pt} z\in X.
\end{equation}

We claim that there exists a constant $\delta>0$ independent of $\epsilon, \tau_{1}, \tau_{2}$,
such that
\begin{equation}\label{equ14}
 \int_{V_{\epsilon}\setminus
E_{\delta,\epsilon}}\frac{i}{2\pi}\Theta_{\psi_{\epsilon}}(A)\wedge\omega^{n-1}\geq
\frac{C_{1}}{2}\qquad\text{for any }\epsilon ,
\end{equation}
where 
$$E_{\delta,\epsilon}=\{z\in V_{\epsilon} | \prod_{i=2}^{n}\lambda_{i}(z)\geq
C_{2,\epsilon}\frac{\tau_{1}^{r-t}}{\delta\tau_{2}^{n-1}}\}.$$
We postphone the proof of the claim in Lemma 5.2 and finish the proof of this lemma.
Since
$$\lambda_{1}(z)\geq
\frac{C_{2}\frac{\tau_{1}^{r-t}}{\tau_{2}^{n-1}}}{C_{2}\frac{\tau_{1}^{r-t}}{
\delta\tau_{2}^{n-1}}}=\delta \qquad \text{for} \hspace{5 pt} 
z\in V_{\epsilon}\setminus E_{\delta,\epsilon} $$
by the definition of $E_{\delta,\epsilon}$ and \eqref{equ13},
\eqref{equ14} implies hence that
$$\int_{V_{\epsilon}}
(\frac{i}{2\pi}\Theta_{h}(L)+\tau_{1}\omega+dd^{c}\varphi_{\epsilon}
)\wedge\omega^{n-1}\geq C_{8}\int_{V_{\epsilon}} \lambda_{1}(z)
\frac{i}{2\pi}\Theta_{\psi_{\epsilon}}(A)\wedge\omega^{n-1}$$
\begin{equation}\label{equ15}
\geq\delta C_{8}\int_{V_{\epsilon}\setminus
E_{\delta,\epsilon}}\frac{i}{2\pi}\Theta_{\psi_{\epsilon}}(A)\wedge\omega^{n-1}\geq
\delta\cdot C_{8}\cdot\frac{C_{1}}{2}. 
\end{equation}
Letting $\epsilon\rightarrow 0$, 
the choice of $V_{\epsilon}$ and \eqref{equ15}
imply that the weak limit of 
$$\frac{i}{2\pi}\Theta_{h}(L)+\tau_{1}\omega+dd^{c}\varphi_{\epsilon}$$
is more positive than
$C_{9}[X_{s}]$.
Thus $L+\tau_{1}\omega-C_{9}[X_{s}]$ is pseudo-effective.
Since $C_{9}$ is independent of $\tau_{1}$, when $\tau_{1}\rightarrow 0$,
we obtain that $L-C_{9}\pi^{*}\pi_{1}^{*}(\mathcal{O}_{S}(1))$ is pseudo-effective.
The lemma is proved.

\end{proof}

\begin{remark}
The proof of the lemma 5.1 does not use the smoothness of $\pi$. 
Lemma 5.1 is also true under the weaker assumption that
the generic fiber of $\pi$ is irreducible.
\end{remark}

\begin{lemme}
We now prove the claim in Lemma 5.1
\end{lemme}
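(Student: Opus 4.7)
The plan is to rephrase \eqref{equ14} as a Chebyshev-type estimate and then exploit the cohomological vanishing forced by Conditions $(*)$. Because $\frac{i}{2\pi}\Theta_{\psi_\epsilon}(A)\leq\beta$ (writing $\beta=\frac{i}{2\pi}\Theta_{\psi_\epsilon}(A)+\tau_2\omega$) and \eqref{equ11} already gives $\int_{V_\epsilon}\frac{i}{2\pi}\Theta_{\psi_\epsilon}(A)\wedge\omega^{n-1}\geq C_1$, it is enough to prove $\int_{E_{\delta,\epsilon}}\beta\wedge\omega^{n-1}\leq C_1/2$ for some $\delta>0$ independent of $\epsilon,\tau_1,\tau_2$; the desired lower bound on $V_\epsilon\setminus E_{\delta,\epsilon}$ then follows by subtraction.

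Set $\alpha=\frac{i}{2\pi}\Theta_h(L)+\tau_1\omega+dd^c\varphi_\epsilon$. Diagonalizing $\alpha$ with respect to $\beta$ in a local frame, the standard identity
$$\alpha^{n-1}\wedge\beta=\frac{\sigma_{n-1}(\lambda)}{n}\beta^n\geq\frac{1}{n}\Bigl(\prod_{i=2}^n\lambda_i\Bigr)\beta^n,$$
combined with $\omega\leq\beta/\tau_2$ (so $\beta\wedge\omega^{n-1}\leq\beta^n/\tau_2^{n-1}$), yields the pointwise estimate $\bigl(\prod_{i\geq 2}\lambda_i\bigr)\,\beta\wedge\omega^{n-1}\leq\frac{n}{\tau_2^{n-1}}\alpha^{n-1}\wedge\beta$. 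Since on $E_{\delta,\epsilon}$ one has $\prod_{i\geq 2}\lambda_i\geq C_{2,\epsilon}\tau_1^{r-t}/(\delta\tau_2^{n-1})$, dividing by this lower bound and integrating produces the Chebyshev inequality
$$\int_{E_{\delta,\epsilon}}\beta\wedge\omega^{n-1}\leq\frac{n\,\delta}{C_{2,\epsilon}\,\tau_1^{r-t}}\int_X\alpha^{n-1}\wedge\beta.$$

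The main obstacle — and the step where Conditions $(*)$ enter decisively — is the cohomological control of $\int_X\alpha^{n-1}\wedge\beta=(c_1(L)+\tau_1\omega)^{n-1}\cdot(c_1(A)+\tau_2\omega)$. Expanding and using $L^a=0$ for $a>\nd(L)=n-r+t$, I would observe that every intersection number of the form $L^{n-r+p}\cdot\omega^{r-p-1}\cdot A$ with $p\geq 1$ vanishes: the $p=1$ case is exactly Remark \ref{hovanskiiinequality} (reduce via $\omega\leq L+C\pi^*\omega_T$ to $\pi_1^*\mathcal{O}_S(1)^2=0$ on the curve $S$), and log-concavity of the Khovanskii-Teissier sequence $L_p:=\int_XL^{n-r+p}\omega^{r-p-1}A$ (together with $L_0>0$ by the relative ampleness of $L$) then forces $L_p=0$ throughout the admissible range $p\geq 1$. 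Only the boundary terms survive, giving
$$\int_X\alpha^{n-1}\wedge\beta\leq C_3\bigl(\tau_1^{r-1}+\tau_2\,\tau_1^{r-t-1}\bigr),$$
with $C_3$ depending only on the cohomology classes $c_1(L),c_1(A),\{\omega\}$.

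Plugging this into the Chebyshev bound yields $\int_{E_{\delta,\epsilon}}\beta\wedge\omega^{n-1}\leq\frac{nC_3\delta}{C_{2,\epsilon}}(\tau_1^{t-1}+\tau_2/\tau_1)\leq\frac{2nC_3\delta}{C_{2,\epsilon}}$ once $\tau_1$ and $\tau_2/\tau_1$ are taken small. A parallel leading-order analysis of the defining ratio for $C_{2,\epsilon}$ (using $(c_1(L)+\tau_1\omega)^n\geq\binom{n}{n-r+t}\tau_1^{r-t}L^{n-r+t}\omega^{r-t}$ and $\int_X\beta^n=\tau_2^n\omega^n+n\tau_2^{n-1}A\omega^{n-1}$ thanks to $A^2=0$) shows $C_{2,\epsilon}$ is bounded below by a strictly positive constant independent of $\epsilon,\tau_1,\tau_2$, limiting to $\binom{n}{n-r+t}L^{n-r+t}\omega^{r-t}/(nA\cdot\omega^{n-1})$. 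Fixing $\delta$ small enough (depending only on $C_1$, $n$, $C_3$ and this lower bound) then makes the Chebyshev bound $\leq C_1/2$, which completes the proof of the claim.
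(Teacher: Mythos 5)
Your proof is correct and follows essentially the same strategy as the paper: bound the product $\prod_{i\geq 2}\lambda_i$ against $\alpha^{n-1}\wedge\beta$ via $\sigma_{n-1}(\lambda)$, apply a Chebyshev-type estimate on $E_{\delta,\epsilon}$, and control the resulting cohomological intersection number using the vanishing of $L$--$A$ intersections provided by Conditions $(*)$. The two presentations differ only in bookkeeping and in the sharpness of the cohomological bound, so I will note those points briefly.

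You work directly with $\int_{E_{\delta,\epsilon}}\beta\wedge\omega^{n-1}$ and use $\omega\leq\beta/\tau_2$ at the pointwise level, whereas the paper estimates $\int_{E_{\delta,\epsilon}}\beta^n\leq C_7\delta\tau_2^{n-1}$ first and only then extracts $\int_{E_{\delta,\epsilon}}\frac{i}{2\pi}\Theta_{\psi_\epsilon}(A)\wedge\omega^{n-1}\leq C_7\delta$ from the expansion $\beta^n\geq n\tau_2^{n-1}\,\frac{i}{2\pi}\Theta_{\psi_\epsilon}(A)\wedge\omega^{n-1}$. These are equivalent. For the intersection number, the paper only invokes the top vanishing $c_1(L)^{n-r+t}\cdot A=0$ from Remark 5.1 and obtains the bound $C_5\tau_1^{r-t}$, which is precisely what is needed to cancel the $\tau_1^{r-t}$ in $C_{2,\epsilon}\tau_1^{r-t}/(\delta\tau_2^{n-1})$; you instead push the Khovanskii--Teissier log-concavity (via $L_1=0$, $L_0>0$) to prove that $\int_X L^{n-r+p}\omega^{r-p-1}A=0$ for \emph{all} $p\geq 1$, yielding the tighter bound $C_3(\tau_1^{r-1}+\tau_2\tau_1^{r-t-1})$. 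That refinement is correct but not required, since the weaker $\tau_1^{r-t}$ already suffices once $\inf_\epsilon C_{2,\epsilon}>0$. Finally, the paper takes the uniform lower bound on $C_{2,\epsilon}$ for granted (``Since $\nd(L)=n-r+t$ and $\dim S=1$, we have $\inf_\epsilon C_{2,\epsilon}>0$''), whereas you supply the explicit leading-order computation $\int_X\beta^n=\tau_2^n\omega^n+n\tau_2^{n-1}A\cdot\omega^{n-1}$ (using $A^2=0$ in cohomology) and $\int_X(c_1(L)+\tau_1\omega)^n\geq\binom{n}{r-t}\tau_1^{r-t}\,L^{n-r+t}\cdot\omega^{r-t}$; this is a welcome filling-in of a step the paper only asserts.
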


\begin{proof}
By construction, 
\begin{equation}\label{equ21}
\int_{X}(\prod_{i=2}^{n}\lambda_{i}(z))(\frac{i}{2\pi}\Theta_{\psi_{\epsilon}}(A)+\tau_{2}\omega)^{n}
\end{equation}
$$\leq
C_{3}\int_{X}(c_{1}(L)+\tau_{1}\omega+dd^{c}\varphi_{
\epsilon})^{n-1}\wedge (\frac{i}{2\pi}\Theta_{\psi_{\epsilon}}(A)+\tau_{2}\omega)$$
$$=C_{3} \int_{X}(c_{1}(L)+\tau_{1}\omega)^{n-1}\wedge
(c_{1}(A)+\tau_{2}\omega).$$
On the other hand, using \eqref{corhovanskii} in Remark \ref{hovanskiiinequality} after Conditions $(*)$, 
we have
\begin{equation}\label{equ22}
\int_{X}(c_{1}(L)+\tau_{1}\omega)^{n-1}\wedge (c_{1}(A)+\tau_{2}\omega)
\end{equation}
$$=C_{4}\tau_{1}^{r-t}c_{1}(L)^{n-r+t-1}\wedge c_{1}(A)+O(\tau_{2})\leq C_{5}\tau_{1}^{r-t}.$$
where the last inequality comes from the fact that $\tau_{2}\ll \tau_{1}$.
Combining \eqref{equ21} and \eqref{equ22}, we have 
\begin{equation}\label{equ23}
\int_{X}(\prod_{i=2}^{n}\lambda_{i}(z))(\frac{i}{2\pi}\Theta_{\psi_{\epsilon}}(A)+\tau_{2}
\omega)^{n}\leq C_{6}\tau_{1}^{r-t}.
\end{equation}

For any $\delta$ fixed,
\eqref{equ23} and the definition of $E_{\delta,\epsilon}$ imply that
$$\int_{E_{\delta}}C_{2,\epsilon}\frac{\tau_{1}^{r-t}}{\delta\tau_{2}^{n-1}}
(\frac{i}{2\pi}\Theta_{\psi_{\epsilon}}(A)+\tau_{2}\omega)^{n}\leq C_{6}\tau_{1}^{r-t}.$$
Combining with the fact that $\inf\limits_{\epsilon} C_{2,\epsilon} > 0$, we get
\begin{equation}\label{equ24}
 \int_{E_{\delta,\epsilon}}(\frac{i}{2\pi}\Theta_{\psi_{\epsilon}}(A)+\tau_{2}\omega)^{n
}\leq C_{7}\delta\tau_{2}^{n-1}.
\end{equation}
Since $\frac{i}{2\pi}\Theta_{\psi_{\epsilon}}(A)$ is semi-positive, 
\eqref{equ24} implies that
\begin{equation}\label{equ25}
\int_{E_{\delta,\epsilon}} \frac{i}{2\pi}\Theta_{\psi_{\epsilon}}(A)\wedge\omega^{n-1}\leq C_{7}\delta.
\end{equation}
By taking $\delta=\frac{C_{1}}{2C_{7}}$, \eqref{equ11} of Lemma 5.1 and \eqref{equ25} imply that
$$\int_{V_{\epsilon}\setminus
E_{\delta,\epsilon}}\frac{i}{2\pi}\Theta_{\psi_{\epsilon}}(A)\wedge\omega^{n-1}\geq
\frac{C_{1}}{2}.$$
The lemma is proved.
\end{proof}

Using Lemma 5.1, we would like to prove a Kawamata-Viehweg type vanishing theorem.
Recall that T.Ohsawa proved in \cite{Ohs}: 
if $X\rightarrow T$ is a smooth fibration and
$(E, h)$ is a hermitian line bundle on $X$ with $\frac{i}{2\pi}\Theta_{h}(E)\geq \pi^{*}\omega_{T}$.
Then 
$$H^{q}(T, R^{0}\pi_{*}(K_{X}\otimes E))=0$$
for $q\geq 1$.
In his proof, he uses the metrics $\pi^{*}\omega_{T}+\tau\omega_{X}$ on $X$
and lets
$\tau\rightarrow 0$  to preserve the information on $T$.
The idea of our proof comes from this technique.

\begin{prop}
Assume that $(X, L)$ satisfies Conditions $(*)$. Then
$$H^{p}(X,K_{X}+L)=0 \qquad \text{for } p\geq r.$$
\end{prop}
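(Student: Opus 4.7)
The strategy is to apply Proposition \ref{basicvanishing} with $p=r$. We aim to construct a singular metric $h$ on $L$ with analytic singularities such that $\mathcal{I}(h)=\mathcal{O}_X$ and the sum of the $r$ smallest eigenvalues of $\tfrac{i}{2\pi}\Theta_h(L)$ with respect to $\omega_X$ is bounded below by a uniform positive constant. Granting this, Proposition \ref{basicvanishing} immediately yields $H^q(X,K_X\otimes L)=H^q(X,K_X\otimes L\otimes\mathcal{I}(h))=0$ for all $q\geq r$, which is the desired vanishing.

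The construction of $h$ combines two sources of positivity. By Lemma 5.1, $L-c_0 A$ is pseudo-effective for some $c_0>0$, where $A:=\pi^*\pi_1^*(\mathcal{O}_S(1))$. Tensoring a positively-curved singular metric on $L-c_0A$ by a smooth semi-positive metric on $c_0 A$ yields a singular metric $h_1=h_0\cdot e^{-\phi}$ on $L$, with $h_0$ a smooth reference, whose curvature current satisfies $\tfrac{i}{2\pi}\Theta(h_1)\geq c_0\omega_A$; here $\omega_A$ is a smooth rank-$1$ semi-positive $(1,1)$-form representing $c_1(A)$. Since $L$ is relatively ample for $\pi$, the reference $h_0$ can be chosen with curvature strictly positive in every direction tangent to the fibers of $\pi$. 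Demailly's equisingular approximation produces $\phi_k$ with analytic singularities and $\mathcal{I}(\phi_k)=\mathcal{I}(\phi)$; after a further rescaling by a sufficiently small $\varepsilon>0$, the metric
$$h_\varepsilon:=h_0\cdot e^{-\varepsilon\phi_k}$$
has analytic singularities, satisfies $\mathcal{I}(h_\varepsilon)=\mathcal{O}_X$ (since the Lelong numbers of $\varepsilon\phi_k$ become arbitrarily small), and its curvature satisfies
$$\tfrac{i}{2\pi}\Theta(h_\varepsilon)\geq(1-\varepsilon)\tfrac{i}{2\pi}\Theta(h_0)+\varepsilon c_0\omega_A-\varepsilon\delta_k\omega_X,$$
with $\delta_k\to 0$.

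The eigenvalue condition is then verified by pointwise linear algebra on $T_xX$. Since $\pi_1\circ\pi:X\to S$ is a submersion and $\dim S=1$, we have $\dim\ker d(\pi_1\circ\pi)=n-1$; combined with $\dim T_{X_t}=n-r$, any $r$-dimensional subspace $V\subset T_xX$ either (i) fails to lie in $\ker d(\pi_1\circ\pi)$, whence $V$ projects surjectively onto $T_sS$ and $\operatorname{tr}_V\omega_A$ is uniformly bounded below by a positive constant, or (ii) lies in $\ker d(\pi_1\circ\pi)$, in which case $\dim(V\cap T_{X_t})\geq r+(n-r)-(n-1)=1$ and the fiber positivity of $\tfrac{i}{2\pi}\Theta(h_0)$ contributes a uniform positive trace. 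In either case these contributions dominate the potentially negative portion of $(1-\varepsilon)\tfrac{i}{2\pi}\Theta(h_0)$ in the remaining $r-1$ horizontal directions contained in $\ker d(\pi_1\circ\pi)\setminus T_{X_t}$, yielding $\sum_{i=1}^r\lambda_i\geq c>0$ uniformly on $X$. The main obstacle is precisely this balancing: $\varepsilon$ must be small enough to trivialize the multiplier ideal sheaf yet large enough for the $\varepsilon c_0\omega_A$-contribution, together with the fiber positivity of $h_0$, to dominate the intermediate negative terms; the fact that $\omega_A$ comes from an ample class on the one-dimensional base $S$ plays a central role in making this balance possible.
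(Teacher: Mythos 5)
Your overall strategy (apply Proposition \ref{basicvanishing} with $p=r$ to a suitable singular metric on $L$ built from Lemma 5.1 and the relative ampleness of $L$) is the same as the paper's, but your metric construction and eigenvalue estimate both have a genuine gap.

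The decisive problem is in case (ii) of your pointwise argument. If $V\subset\ker d(\pi_1\circ\pi)$ is $r$-dimensional, then $\omega_A|_V=0$, so the entire $\varepsilon c_0\omega_A$ term contributes nothing. You are then left with $(1-\varepsilon)\tfrac{i}{2\pi}\Theta(h_0)$ on $V$: a single guaranteed-positive fiber direction and $r-1$ directions transverse to the fiber (in $E_1$), on which $h_0$ has no sign constraint. Both the fiber positivity and the horizontal negativity of $h_0$ are of order $1$ and completely fixed once $h_0$ is chosen; there is no reason whatever that the former dominates the latter, and you have no parameter left to tune ($\varepsilon$ now multiplies only $\omega_A$, which vanishes on $V$, and $-\delta_k\omega_X$, which is negative). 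So the "balancing" you invoke at the end is not actually available in your setup: it requires $\varepsilon$ bounded below by a constant depending on $h_0$, while the triviality of $\mathcal{I}(\varepsilon\phi_k)$ forces $\varepsilon$ small. These two requirements can genuinely conflict, since the Lelong numbers coming from Lemma 5.1 are not controlled. Case (i) has a secondary issue of the same kind: $\operatorname{tr}_V\omega_A$ is \emph{not} uniformly bounded below over $r$-planes $V$ not contained in $\ker d(\pi_1\circ\pi)$ — it degenerates to $0$ as $V$ approaches that kernel — so the dichotomy is not clean and the transition region is exactly where the estimate is needed.

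The paper resolves this by two devices you do not use. First, it takes a convex combination of \emph{three} metrics, $h=\epsilon_1 h_1+\epsilon_2 h_2+(1-\epsilon_1-\epsilon_2)h_\epsilon$ with $1\gg\epsilon_1\gg\epsilon_2\gg\epsilon$, where $h_1$ carries the $\pi^*\omega_S$-positivity from Lemma 5.1, $h_2$ is the fiber-positive metric, and $h_\epsilon$ is a nearly-flat metric ($i\Theta_{h_\epsilon}(L)\geq-\epsilon\omega_X$) from nefness. This makes the dominant smooth term nearly non-negative, so the horizontal negativity contributed by $h_2$ is only of order $\epsilon_2$, which is in turn dominated by the order-$\epsilon_1$ positivity in the $S$-direction; all these constants can be taken arbitrarily small simultaneously while preserving the hierarchy, so $\mathcal{I}(h)=\mathcal{O}_X$ is compatible. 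In your two-metric version, the horizontal negativity of the fiber-positive metric $h_0$ enters with weight $(1-\varepsilon)\approx 1$, and cannot be scaled away. Second, the paper applies Proposition 2.4 with the degenerate metrics $\omega_\tau=\tau\omega_X+\pi^*\omega_T$ and lets $\tau\to 0$: this drives the fiber eigenvalues to $+\infty$, so in the limit the minimizing $r$-plane is forced into the horizontal bundle $E_1\oplus E_2$ of rank exactly $r$, and one needs only the $E_2$-versus-$E_1$ balance — which the hierarchy of three weights provides. You instead keep $\omega=\omega_X$ fixed, which is exactly why your case (ii) has to deal with mixed fiber/horizontal $V$'s and cannot close the estimate. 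Incorporating both devices — three constituent metrics with a hierarchy of weights, and the degenerating background $\omega_\tau$ — would repair the argument.
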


\begin{proof}
Thanks to Conditions $(*)$, we have a smooth fibration
$$\begin{CD} X @>\pi >> T @> \pi_{1}>> S \end{CD} .$$
Using the fixed metric $\omega_{X}$, we have a $C^{\infty}$-decomposition of the tangent bundle of $X$:
$$T_{X}=T_{X/T}\oplus E_{1} \oplus E_{2},$$
where $T_{X/T}$ is the relative tangent bundle of $\pi: X\rightarrow T$, 
$E_{1}$ is the pull back of the relative tangent bundle of $\pi_{1}: T\rightarrow S$
and $E_{2}$ is the pull back of the tangent bundle of $S$.

We first construct a metric $h$ with analytic singularities on $L$ such that
$i\Theta_{h}(L)$ satisfies the following three conditions.

$(I)$. $i\Theta_{h}(L)$ is strictly positive on $T_{X/T}$.

$(II)$. The restrictions of $i\Theta_{h}(L)$ on $E_{1}$ maybe negative, 
but the positivity of the restrictions on $E_{2}$ is large enough with respect to the negativity on $E_{1}$.  

$(III)$. $\mathcal{I}(h)=\mathcal{O}_{X}$.

\hspace{-12pt}By Lemma 5.1 and Demailly's regularization theorem,
for any $\epsilon> 0$,
there is a singular metric $h_{1}$ with analytic singularities such that 
\begin{equation}\label{equ31}
 i\Theta_{h_{1}}(L)\geq c\pi^{*}(\omega_{S})-\epsilon\omega_{X} .
\end{equation}
We will make the choice of $\epsilon$ more explicit later on.
Moreover, since $L$ is relatively ample, there is a smooth metric $h_{2}$ on $L$,
such that the restriction of $i\Theta_{h_{2}}(L)$ on $T_{X/T}$ is strictly positive.
Thanks to the nefness of $L$, we can also choose a smooth metric $h_{\epsilon}$ on $L$ 
such that 
\begin{equation}\label{equ32}
i\Theta_{h_{\epsilon}}(L)\geq -\epsilon\omega_{X}.
\end{equation}
We now define a new metric $h$ on $L$ by combining the above three metrics:
$$h=\epsilon_{1} h_{1}+\epsilon_{2} h_{2}+ (1-\epsilon_{1}-\epsilon_{2})h_{\epsilon}$$
for some $1\gg \epsilon_{1}\gg \epsilon_{2}\gg \epsilon> 0$.

We now check that the new metric $h$ satisfies the above three conditions.
Since $1\gg \epsilon_{1}\gg\epsilon_{2}$ and $h_{\epsilon}$ is smooth, 
Condition $(III)$ follows.
To check the first two properties, we first observe that \eqref{equ31} and \eqref{equ32} imply that
$$i\Theta_{h}(L)\geq c\epsilon_{1} \pi^{*}(\omega_{S})+\epsilon_{2} i\Theta_{h_{2}}(L)-\epsilon\omega_{X}.$$
Therefore it is enough to check the conditions $(I)$ and $(II)$ for 
$c\epsilon_{1} \pi^{*}(\omega_{S})+\epsilon_{2} i\Theta_{h_{2}}(L)-\epsilon\omega_{X}$.
Condition $(I)$ follows by the observation that 
$i\Theta_{h_{2}}$ is strictly positive on $T_{X/T}$ and $\epsilon_{2}\gg \epsilon$.
Since $\epsilon_{1}\gg \epsilon_{2}$,
the positivity of $i\Theta_{h}(L)$ on the direction of $E_{2}$ 
is large enough with respect to the negativity on the directions of $E_{1}$,
which comes from $\epsilon_{2} i\Theta_{h_{2}}(L)$.
Condition $(II)$ follows.

Let $\omega_{T}$ be a Kähler metric on $T$ and let
$\omega_{\tau}=\tau\omega_{X}+\pi^{*}(\omega_{T})$ for $\tau> 0$.
When $\tau\rightarrow 0$, Condition $(I)$ and Condition $(II)$ of $h$ imply that
the pair $(X, \omega_{\tau}, L, h)$
satisfies \eqref{strictpositive} in Proposition 2.4.
Then
$$H^{p}(X,K_{X}\otimes L\otimes\mathcal{I} (h))=0
\qquad \text{for } p\geq r.$$
Since $\mathcal{I}(h)=\mathcal{O}_{X}$ by our construction,
we get
$$H^{p}(X,K_{X}\otimes L)=0\qquad \text{for } p\geq r.$$

\end{proof}

\section{Deformation of compact Kähler manifolds with nef tangent bundles}

Before giving the proof of our main theorem, we need a technical lemma.

\begin{lemme}
Assume that $X$ has a two step tower smooth fibration:
$$\begin{CD} X @>\pi >> T @> \pi_{1}>> S \end{CD} ,$$ 
where $T$ is a torus of dimension $r$, and $S$ is an abelian variety of dimension $s$.
We suppose also that the fibers of $\pi$ are Fano manifolds.
Let $S_{p}$ be a complete intersection of zero divisors of 
$p$ general global sections of a very ample line bundle $\mathcal{O}_{S}(1)$ on $S$.
Let $T_{p}$ and $X_{p}$ be the inverse images of $S_{p}$ in $T$ and $X$.
Then 
$$H^{r-p}(X_{p},K_{X_{p}}-K_{X})\neq 0\qquad \text{for} \hspace{5 pt} 0 \leq p\leq s-1.$$
\end{lemme}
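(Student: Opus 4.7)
The plan is to push the cohomology question down through the tower and then recognize it as a Serre dual of $H^0$. Write $L=\pi_1^*\mathcal{O}_S(1)$ on $T$ and $M=\pi^*L=(\pi_1\circ\pi)^*\mathcal{O}_S(1)$ on $X$. Since $S$ is an abelian variety, $K_S=0$, and because $S_p$ is cut out by $p$ divisors in $|\mathcal{O}_S(1)|$, adjunction on $S$ gives $K_{S_p}=\mathcal{O}_S(p)|_{S_p}$. Pulling this back through the two smooth submersions, the normal bundle of $T_p$ in $T$ is $\pi_1^*N_{S_p/S}$ with determinant $L^p|_{T_p}$, and $K_T=0$, so adjunction yields $K_{T_p}=L^p|_{T_p}$. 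Analogously, on $X$ we get $(K_{X_p}-K_X)|_{X_p}=M^p|_{X_p}$, so the statement to prove becomes
\[
H^{r-p}\bigl(X_p,\,M^p|_{X_p}\bigr)\neq 0\qquad\text{for }0\leq p\leq s-1.
\]

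Next I would descend this to $T_p$ via Leray. The restriction $\pi|_{X_p}\colon X_p\to T_p$ is still a smooth submersion with Fano fibers $F$. By Kodaira vanishing on $F$, $H^q(F,\mathcal{O}_F)=0$ for $q\geq 1$, hence by cohomology and base change $R^q(\pi|_{X_p})_*\mathcal{O}_{X_p}=0$ for $q\geq 1$ and $(\pi|_{X_p})_*\mathcal{O}_{X_p}=\mathcal{O}_{T_p}$. Since $M^p|_{X_p}$ is pulled back from $T_p$, the projection formula gives $R^q(\pi|_{X_p})_*(M^p|_{X_p})=0$ for $q\geq 1$ and $(\pi|_{X_p})_*(M^p|_{X_p})=L^p|_{T_p}$. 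The Leray spectral sequence therefore degenerates into the isomorphism
\[
H^{r-p}\bigl(X_p,M^p|_{X_p}\bigr)\;\cong\;H^{r-p}\bigl(T_p,L^p|_{T_p}\bigr)\;=\;H^{r-p}(T_p,K_{T_p}).
\]

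Finally, Serre duality on the $(r-p)$-dimensional smooth complete intersection $T_p$ identifies this group with $H^0(T_p,\mathcal{O}_{T_p})^*$, which is nonzero as soon as $T_p$ is connected (and in fact equals $\mathbb{C}$). So the whole argument is reduced to the geometric statement that $T_p$ is connected for $0\leq p\leq s-1$.

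The main (and really only) obstacle is this connectedness step. I would handle it in two moves. First, $S_p$ is a general complete intersection of $p$ members of an ample linear system on the smooth projective $S$; by Bertini connectedness applied iteratively (or directly by the Fulton--Hansen/Bertini theorem on ample complete intersections), $S_p$ is smooth and connected whenever $\dim S_p=s-p\geq 1$, i.e.\ exactly when $p\leq s-1$. Second, $\pi_1\colon T\to S$ is a smooth submersion of tori and therefore has connected fibers (its fibers are subtori, up to translation), so $T_p=\pi_1^{-1}(S_p)$ inherits connectedness from $S_p$. Once $T_p$ is connected, Serre duality delivers the required nonvanishing, and the lemma follows. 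The endpoint case $p=s$ is excluded precisely because then $\dim S_p=0$ and $S_p$ is a finite set of points so $T_p$ becomes disconnected, which is why the statement is restricted to $p\leq s-1$.
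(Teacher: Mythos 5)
Your proof is correct, and it departs from the paper's argument in a meaningful way at the final step. Both proofs use the same two reductions: the adjunction formula to convert the statement into $H^{r-p}(X_p, p\pi^*\pi_1^*\mathcal{O}_S(1))\neq 0$, and the Leray spectral sequence for $\pi|_{X_p}\colon X_p\to T_p$ (Fano fibers kill $R^q\pi_*\mathcal{O}_{X_p}$ for $q\geq 1$) to descend to $H^{r-p}(T_p, K_{T_p})\neq 0$. At that point the paper runs a downward induction on $p$: it invokes a Kodaira/Kawamata-type vanishing $H^{r-s+i}(T_p, K_{T_p}+\pi_1^*\mathcal{O}_S(1))=0$ for $i\geq 1$, feeds it into the long exact sequence of $0\to K_{T_{p-1}}\to K_{T_{p-1}}+\pi_1^*\mathcal{O}_S(1)\to K_{T_p}\to 0$ to get $H^{r-p}(T_p,K_{T_p})\cong H^{r-p+1}(T_{p-1},K_{T_{p-1}})$, and bootstraps from $H^r(T,K_T)\neq 0$. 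You instead observe that $\dim T_p=r-p$, so $H^{r-p}(T_p,K_{T_p})$ is the top cohomology, and by Serre duality it equals $H^0(T_p,\mathcal{O}_{T_p})^*$, which is nonzero exactly when $T_p$ is connected; you then supply connectedness via Bertini on the ample system over $S$ (valid for $p\leq s-1$) and the fact that the torus submersion $\pi_1$ has connected fibers. Your route avoids the auxiliary vanishing theorem and the long exact sequence entirely, and it makes the restriction $p\leq s-1$ transparent (it is precisely the connectedness threshold), whereas in the paper's version that bound is only visible as the range in which the induction terminates before $T_p$ degenerates. The one thing your write-up leaves implicit, but which is needed for Serre duality in the form you use, is smoothness of $T_p$: this is automatic since $S_p$ is smooth (Bertini) and $\pi_1$ is a submersion, but it is worth stating.
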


\begin{proof}
By the adjunction formula 
$-K_{X_{p}}+p\pi^{*}\pi_{1}^{*}\mathcal{O}_{S}(1)=-K_{X}|_{X_{p}} ,$
we have
\begin{equation}\label{equation11}
H^{r-p}(X_{p},K_{X_{p}}-K_{X})=H^{r-p}(X_{p}, p\pi^{*}\pi_{1}^{*}\mathcal{O}_{S}(1) ).
\end{equation}
On the other hand,  
the assumption that the fibers of $\pi$ are Fano manifolds implies that
\begin{equation}\label{equation12}
H^{r-p}(X_{p}, p\pi^{*}\pi_{1}^{*}\mathcal{O}_{S}(1) )=H^{r-p}(T_{p}, p\pi_{1}^{*}\mathcal{O}_{S}(1))
\end{equation}
by using the Leray spectral sequence.
Observing moreover that $K_{T_{p}}=p\pi_{1}^{*}\mathcal{O}_{S}(1)$,
\eqref{equation11} and \eqref{equation12} imply that
\begin{equation}\label{equation13}
 H^{r-p}(X_{p},K_{X_{p}}-K_{X})=H^{r-p}(T_{p}, K_{T_{p}})
\qquad \text{for} \hspace{5 pt} 0 \leq p\leq s-1 .
\end{equation}
To prove the lemma, 
it is therefore enough to check the non vanishing of $H^{r-p}(T_{p},
K_{T_{p}})$.

Since $\dim T_{p}-\dim S_{p}=r-s$ for any $p$, 
by a standard vanishing theorem (cf.
\cite[Theorem 4.1, Chapter VII]{Dem}), we have
\begin{equation}\label{equation14}
 H^{r-s+i}(T_{p}, K_{T_{p}} +\pi_{1}^{*}\mathcal{O}_{S}(1))=0
\end{equation}
for $i\geq 1$ and $p=0,1,...,s-1$.
Thanks to the exact sequence
$$0\rightarrow \mathcal{O}_{T_{p-1}}( K_{T_{p-1}})\rightarrow
\mathcal{O}_{T_{p-1}}( K_{T_{p-1}}+\pi_{1}^{*}\mathcal{O}_{S}(1))
\rightarrow \mathcal{O}_{T_{p}} ( K_{T_{p}})\rightarrow 0 ,$$
\eqref{equation14} implies that
\begin{equation}\label{equation15}
 H^{r-s+i}(T_{p}, K_{T_{p}})=H^{r-s+i+1}(T_{p-1}, K_{T_{p-1}}) \qquad \text{for} \hspace{5 pt} i \geq 1.
\end{equation}
In particular, if we take $i=s-p$ in \eqref{equation15}, 
then
\begin{equation}\label{equation16}
H^{r-p}(T_{p}, K_{T_{p}})=H^{r-p+1}(T_{p-1}, K_{T_{p-1}}) .
\end{equation}
Since $T_{0}=T$ is a torus,  we have $H^{r}(T_{0}, K_{T_{0}})\neq 0$.
Then \eqref{equation16} implies by induction that 
$$H^{r-p}(T_{p}, K_{T_{p}})\neq 0 \qquad \text{for} \hspace{5 pt} 0 \leq p\leq s-1.$$
Using \eqref{equation13}, the lemma is proved. 
\end{proof}

\begin{thm}
Let $X$ be a compact Kähler manifold of dimension $n$ with nef tangent bundle, 
and $\pi: X\rightarrow T$ a smooth fibration onto a torus $T$ of dimension $r$ 
such that $-K_{X}$ is nef and relatively ample. 
Then $\nd (-K_{X})=n-r$.
\end{thm}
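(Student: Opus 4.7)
The relative ampleness of $-K_{X}$ with $(n-r)$-dimensional fibers gives immediately $\nd(-K_{X})\geq n-r$, so the plan is to exclude $\nd(-K_{X})\geq n-r+1$ by contradiction. Assume the contrary and consider the pushforward $\alpha:=\pi_{*}((-K_{X})^{n-r+1})\in H^{1,1}(T,\mathbb{Z})$, which is a semipositive integral class. The argument splits according to whether $\alpha=0$ or $\alpha$ is non-trivial and effective.

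In the vanishing case $\alpha=0$, Proposition \ref{berndslemma} combined with $K_{T}=\mathcal{O}_{T}$ (as $T$ is a torus) gives that $\pi_{*}(-mK_{X})$ is nef for every $m\geq 1$. Combined with $\pi_{*}((-K_{X})^{n-r+1})=0$, one invokes \cite[Corollary 2.6]{DPS94} to upgrade this to numerical flatness of $\pi_{*}(-mK_{X})$ for every $m\geq 1$. Corollary 4.3 then delivers the equality $\nd(-K_{X})=n-r$, contradicting the assumption.

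If $\alpha\neq 0$, Proposition 2.2 provides a submersion $\pi_{1}:T\to S$ onto an abelian variety of some dimension $s\geq 1$ and an ample line bundle $A$ on $S$ (taken very ample after replacing by a high power) with $\alpha=\pi_{1}^{*}c_{1}(A)$. Cut $S$ by $s-1$ general members of $|A|$: by Bertini one obtains a smooth curve $S_{s-1}\subset S$, and pulling back along the smooth fibrations yields smooth subvarieties $T_{s-1}:=\pi_{1}^{-1}(S_{s-1})$ and $X_{s-1}:=(\pi_{1}\circ\pi)^{-1}(S_{s-1})$ fitting into a smooth two-step tower $X_{s-1}\to T_{s-1}\to S_{s-1}$. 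The restriction $L:=-K_{X}|_{X_{s-1}}$ remains nef and relatively ample over $T_{s-1}$, and restricting the identity $\pi_{*}((-K_{X})^{n-r+1})=\pi_{1}^{*}c_{1}(A)$ to $T_{s-1}$ gives
$$(\pi|_{X_{s-1}})_{*}(L^{n-r+1})=(\pi_{1}|_{T_{s-1}})^{*}\mathcal{O}_{S_{s-1}}(1),$$
so $(X_{s-1},L)$ satisfies Conditions $(*)$ of Section 5 with $(n,r)$ replaced by $(n-s+1,\,r-s+1)$ (the exponent $n-r+1$ being unchanged).

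Proposition 5.3 therefore yields $H^{p}(X_{s-1},\,K_{X_{s-1}}-K_{X}|_{X_{s-1}})=0$ for every $p\geq r-s+1$. On the other hand, since $-K_{X}$ is relatively ample the fibers of $\pi$ are Fano manifolds, so Lemma 6.1 applied with $p=s-1$ supplies
$$H^{r-(s-1)}(X_{s-1},\,K_{X_{s-1}}-K_{X})\neq 0;$$
the identity $r-(s-1)=r-s+1$ produces the required contradiction, completing the case $\alpha\neq 0$. The main obstacle I anticipate is the geometric bookkeeping in this second case: checking that the generic hyperplane-section construction preserves smoothness throughout the tower and that the direct-image identity descends cleanly to $T_{s-1}$ so that Proposition 5.3 genuinely applies to the restricted pair $(X_{s-1},L)$; once this is established, the conflict between Proposition 5.3 and Lemma 6.1 is immediate.
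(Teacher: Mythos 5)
Your proposal is correct and follows essentially the same approach as the paper: the same two-case decomposition according to whether $\pi_{*}((-K_{X})^{n-r+1})$ vanishes or not, Case 1 handled via nefness of $E_m=\pi_*(-mK_X)$ plus \cite[Corollary 2.6]{DPS94} and Corollary 4.3, and Case 2 handled by cutting $S$ down to a curve, checking Conditions $(*)$, and playing the vanishing from Proposition 5.3 against the non-vanishing from Lemma 6.1. The only compression relative to the paper is in Case 1, where the paper passes through the Riemann--Roch--Grothendieck computation $\Ch(E_m)=\pi_*(c_1(-K_X)\cdot\Todd(T_X))$ to deduce $c_1(E_m)\cdot\omega_T^{r-1}=0$ before concluding numerical flatness, but this is a detail of exposition rather than a gap.
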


\begin{proof}

We prove the theorem by contradiction.
Observing that the relative ampleness of $-K_{X}$ already implies that
$\nd(-K_{X})\geq n-r$, 
we can thus assume by contradiction that $\nd(-K_{X})\geq n-r+1$. 
There are two cases.

\textbf{Case 1:} $\pi_{*}((-K_{X})^{n-r+1})$ is trivial on $T$. 

Then
\begin{equation}\label{equation21}
\int_{X}c_{1}(-K_{X})^{n-r+1}\wedge(\pi^{*}\omega_{T})^{r-1}=0 ,
\end{equation}
where $\omega_{T}$ is a Kähler form on $T$.
Since $T_{X}$ is nef, by \cite[Corollary 2.6]{DPS94}, \eqref{equation21} implies that
all degree $n-r+1$ Chern polynomals $P$ of $T_{X}$ satisfy
\begin{equation}\label{equation22}
\int_{X}P(T_{X})\wedge(\pi^{*}\omega_{T})^{r-1}=0 .
\end{equation}
Let $E_{m}=\pi_{*}(-mK_{X})$.
By the Riemann-Roch-Grothendick theorem, we have
\begin{equation}\label{equation23}
\Ch(E_{m})=\pi_{*}(c_{1}(-K_{X})\cdot \Todd(T_{X})).
\end{equation}
Then \eqref{equation22} and \eqref{equation23} imply that $\int_T c_{1}(E_{m})\wedge(\omega_{T})^{r-1}=0$. 
But by Proposition 2.1, $E_{m}$ is nef on $T$. 
Therefore $E_{m}$ is numerically flat.
By Corollary 4.3, $\nd(-K_{X})=n-r$. 
We get a contradiction.

\textbf{Case 2:} $\pi_{*}((-K_{X})^{n-r+1})$ is a non trivial class on $T$. 

Since $-K_X$ is also nef, we have
\begin{equation} \label{eqna}
\int_{T}\pi_{*}(c_{1}(-K_X)^{n-r+1})\wedge\omega_{T}^{r-1}> 0
\end{equation}
for any K\"ahler class $\omega_{T}$.
Using the assumption that $T$ is a torus, 
we can represent the cohomology class $\pi_{*}(c_{1}(-K_X)^{n-r+1})$ by
a constant $(1,1)$-form 
$$\sum_{i=1}^{r}\lambda_{i}d z_{i}\wedge d\overline{z}_{i}$$
on $T$.
Since \eqref{eqna} is valid for any K\"ahler class $\omega_{T}$, an elementary computation shows that 
$\lambda_{i}\geq 0$ for any $i$. Thus 
$\pi_{*}(c_{1}(-K_X)^{n-r+1})$ is a semipositive 
(non trivial) class in $H^{1,1} (T) \cap H^2(T, \mathbb{Q})$.
By Proposition 2.2,  we have the following smooth fibration
$$\begin{CD} X @>\pi >> T @> \pi_{1}>> S \end{CD} ,$$
where $S$ is an abelian variety of dimension $m$, and 
\begin{equation}\label{equation24}
\pi_{*}((-K_{X})^{n-r+1})=c\cdot \pi_{1}^{*}\mathcal{O}_{S}(1)
\end{equation}
for some $c>0$ and a very ample line bundle $\mathcal{O}_{S}(1)$ on $S$. 
Let $S_{m-1}$ be a complete intersection of $m-1$ general global sections of $H^{0}(S,\mathcal{O}_{S}(1))$.
Let $X_{m-1}$ and $T_{m-1}$ be the inverse images of $S_{m-1}$ in $X$ and $T$.
Then we have a smooth fibration
\begin{equation}\label{equation25}
\begin{CD} X_{m-1} @>\widetilde{\pi} >> T_{m-1} @> \widetilde{\pi}_{1}>> S_{m-1} \end{CD} .
\end{equation}
By \eqref{equation24}, we obtain that $(X_{m-1}, -K_{X}|_{X_{m-1}})$ and \eqref{equation25}
satisfy Conditions $(*)$ in Section 5.

Since $\dim T_{m-1}=r-(m-1)$, applying Proposition 5.3 to $(X_{m-1}, -K_{X}|_{X_{m-1}})$,
we obtain
$$H^{r-(m-1)}(X_{m-1}, K_{X_{m-1}}-K_{X})=0.$$
On the other hand, Lemma 6.1 implies that
$$H^{r-(m-1)}(X_{m-1}, K_{X_{m-1}}-K_{X})\neq 0.$$
We obtain again a contradiction.

Since Case 1 and Case 2 are both impossible, 
we infer that $\nd (-K_{X})=n-r$.

\end{proof}

Now we can prove our main result:
\begin{thm}
Let $X$ be a compact Kähler manifold of dimension $n$ with nef tangent bundle. 
Then $X$ can be approximated by projective varieties.
\end{thm}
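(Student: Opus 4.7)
The plan is to synthesise the prior results of the paper: I pass to a suitable étale Galois cover, invoke Theorem 6.2 to obtain numerical flatness of the relevant direct images there, descend that property to $X$, and then conclude by Corollary 4.3.

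First I would apply Theorem 1.1 to obtain a finite étale Galois cover $p:\widetilde{X}\to X$ with group $G$ whose Albanese map $\widetilde{\pi}:\widetilde{X}\to T$ is a smooth fibration onto a torus of dimension $r$ with $-K_{\widetilde{X}}$ relatively ample. Lemma 2.5 then provides the commutative diagram inducing a smooth fibration $\pi:X\to T/G$ onto a smooth étale Galois quotient of a torus. Since $T_{X}$ is nef, $-K_{X}=\det T_{X}$ is also nef, and relative ampleness for $\pi$ is inherited from the cover by étale base change. Applying Theorem 6.2 to $\widetilde{X}$ yields $\nd(-K_{\widetilde{X}})=n-r$, and that $\widetilde{\pi}_{*}(-mK_{\widetilde{X}})$ is numerically flat on $T$ for every $m\geq 1$.

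Next I would descend numerical flatness from $T$ to $T/G$. Because $p$ is étale, $K_{\widetilde{X}}=p^{*}K_{X}$, so flat base change identifies $\widetilde{\pi}_{*}(-mK_{\widetilde{X}})$ with the pullback of $\pi_{*}(-mK_{X})$ under the étale quotient $q:T\to T/G$. A vector bundle $E$ on $T/G$ is nef if and only if $q^{*}E$ is nef, since $\mathbb{P}(q^{*}E)=T\times_{T/G}\mathbb{P}(E)$ and nefness of line bundles descends through finite surjective maps; the same reasoning applies to $(\det E)^{-1}$. Hence $\pi_{*}(-mK_{X})$ is itself numerically flat on $T/G$ for every $m\geq 1$. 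All hypotheses of Corollary 4.3 are now in place for $\pi:X\to T/G$, and that corollary concludes that $X$ can be approximated by projective varieties.

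The only delicate point is the descent step: one must check that numerical flatness of a direct image bundle really descends from the étale cover to the base and that base change identifies the two bundles correctly. Once this identification is made, the remainder of the argument is a direct combination of the structure theorem, the Galois quotient diagram of Lemma 2.5, Theorem 6.2 and Corollary 4.3, with no further analytic input required.
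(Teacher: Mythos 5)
Your route is valid but genuinely different from the paper's. The paper never performs the descent step you use: it applies Proposition 2.1 directly to $\pi\colon X\to T/G$ (using that $K_{T/G}$ is numerically trivial) to show $E_m=\pi_*(-mK_X)$ is nef, then invokes Riemann--Roch--Grothendieck together with [DPS94, Corollary 2.6] and $\nd(-K_X)=n-r$ to get $c_1(E_m)=0$, hence numerical flatness, and finishes with Corollary 4.3. You instead establish numerical flatness of $\widetilde\pi_*(-mK_{\widetilde X})$ over the torus $T$ and descend it to $T/G$. The descent is correct: because Lemma 2.5 shows $G$ acts freely on $T$, the square in that lemma is Cartesian, so flat base change plus $p^*K_X=K_{\widetilde X}$ give $q^*\pi_*(-mK_X)\cong\widetilde\pi_*(-mK_{\widetilde X})$, and nefness of both $E_m$ and $(\det E_m)^{-1}$ descends along the finite étale $q$ by $G$-averaging metrics. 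The trade-off: you run the nefness-and-Chern-class argument over an honest torus where Proposition 2.1's hypotheses are cleanest, at the price of an additional base-change argument that the paper's direct route avoids.

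One point needs repair. You attribute the numerical flatness of $\widetilde\pi_*(-mK_{\widetilde X})$ to Theorem 6.2, but as stated in the body Theorem 6.2 yields only $\nd(-K_{\widetilde X})=n-r$. The numerical flatness still has to be deduced from that equality by the chain the paper runs on $X\to T/G$: Proposition 2.1 gives nefness (using $K_T$ trivial), and Riemann--Roch--Grothendieck with [DPS94, Corollary 2.6] gives $c_1=0$. (Theorem 1.2 in the introduction advertises the numerical flatness, but in the body it is derived inside the proof of Theorem 6.3, not as part of Theorem 6.2.) Once you insert that step on the cover, your descent argument completes the proof.
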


\begin{proof}
By Lemma 2.5, there exists a finite étale Galois cover $\widetilde{X}\rightarrow X$ with group $G$ 
such that one has a commutative diagram
$$\begin{CD}
  \widetilde{X} @>>> X\\
 @VV\widetilde{\pi} V @VV\pi V\\
  T @>>> T/ G
  \end{CD}
$$
where the fibers of $\pi$ are Fano manifolds.
We suppose that $\dim T=r$.
Thanks to Theorem 6.2, we have $\nd(-K_{\widetilde{X}})=n-r$, 
which is equivalent to say that
$\nd (-K_{X})=n-r.$

Let $E_{m}=\pi_{*}(-mK_{X})$, for $m\geq 1$. 
Since $K_{T/G}$ is flat, by Proposition 2.1, 
$E_{m}$ is a nef vector bundle.
By the Riemann-Roch-Grothendick theorem, we have
$$\Ch(E_{m})=\pi_{*}(\Ch(-K_{X})\Todd (T_{X})) .$$
Since  $\nd (-K_{X})=n-r$,  the above equality implies that
$c_{1}(E_{m})=0$ 
by using \cite[Corollary 2.6]{DPS94}.
Then $E_{m}$ is numerically flat.
Using Corollary 4.3, we conclude that $X$ can be approximated by projective varieties.

\end{proof}

\bibliographystyle{alpha}
\bibliography{biblio}

\end{document}